\documentclass[oneside, 11pt]{amsart} 
\usepackage{amsmath,amsthm,amssymb,epic}
\usepackage{eqlist,eqparbox}
\usepackage{amsfonts}
\usepackage{latexsym}
\usepackage[2emode]{psfrag}
\usepackage{amsthm}
\usepackage{amsmath}
\usepackage[all]{xy}

\addtolength{\hoffset}{-1.5cm}
\addtolength{\textwidth}{3cm}
\addtolength{\voffset}{-0.7cm}

\begin{document}

\newcommand{\N}{\mbox {$\mathbb N $}}
\newcommand{\Z}{\mbox {$\mathbb Z $}}
\newcommand{\Q}{\mbox {$\mathbb Q $}}
\newcommand{\R}{\mbox {$\mathbb R $}}
\newcommand{\lo }{\longrightarrow }
\newcommand{\ul}{\underleftarrow }
\newcommand{\rl}{\underrightarrow }
\newcommand{\rs }{\rightsquigarrow }
\newcommand{\ra }{\rightarrow }
\newcommand{\dd }{\rightsquigarrow }
\newcommand{\ol }{\overline }
\newcommand{\la }{\langle }
\newcommand{\tr }{\triangle }
\newcommand{\xr }{\xrightarrow }
\newcommand{\de }{\delta }
\newcommand{\pa }{\partial }
\newcommand{\LR }{\Longleftrightarrow }
\newcommand{\Ri }{\Rightarrow }
\newcommand{\va }{\varphi }
\newcommand{\Den}{{\rm Den}\,}
\newcommand{\Ker}{{\rm Ker}\,}
\newcommand{\Reg}{{\rm Reg}\,}
\newcommand{\Fix}{{\rm Fix}\,}
\newcommand{\Img}{{\rm Im}\,}
\newcommand{\At}{{\rm At}\,}
\newcommand{\K}{{\rm K}\,}
\newcommand{\G}{{\rm G}\,}
\newcommand{\Idp}{{\rm Idp}\,}
\newcommand{\Id}{{\rm Id}\,}

\newtheorem{prop}{Proposition}[section] 
\newtheorem{lemma}[prop]{Lemma}
\newtheorem{cor}[prop]{Corollary}
\newtheorem{theo}[prop]{Theorem}
                       
\theoremstyle{definition}
\newtheorem{Def}[prop]{Definition}
\newtheorem{ex}[prop]{Example}
\newtheorem{exs}[prop]{Examples}
\newtheorem{Not}[prop]{Notation}
\newtheorem{Ax}[prop]{Axiom}
\newtheorem{rems}[prop]{Remarks}
\newtheorem{rem}[prop]{Remark}
\newtheorem{op}[prop]{Open problem}
\newtheorem{conj}[prop]{Conjecture}

\def\leftmark{L.C. Ciungu}
\title{Pseudo-BCI algebras with derivations}
\author{Lavinia Corina Ciungu}

\begin{abstract}

In this paper we define two types of implicative derivations on pseudo-BCI algebras, we investigate 
their properties and we give a characterization of regular implicative derivations of type II. 
We also define the notion of a $d$-invariant deductive system of a pseudo-BCI algebra $A$ proving that 
$d$ is a regular derivation of type II if and only if every deductive system on $A$ is $d$-invariant. 
It is proved that a pseudo-BCI algebra is $p$-semisimple if and only if the only regular derivation of type II 
is the identity map. Another main result consists of proving that the set of all implicative derivations 
of a $p$-semisimple pseudo-BCI algebra forms a commutative monoid with respect to function composition. 
Two types of symmetric derivations on pseudo-BCI algebras are also introduced and it is proved that in the case of 
$p$-semisimple pseudo-BCI algebras the sets of type II implicative derivations and type II symmetric derivations 
are equal. \\

{\small {\it Keywords:} pseudo-BCI algebra, commutative pseudo-BCI algebra, $p$-semisimple pseudo-BCI algebra, implicative derivation, regular derivation, isotone derivation, invariant deductive system, symmetric derivation} \\

{\small {\it AMS Mathematics Subject Classification (2010):} 06D35, 06F05, 03F50} \\
\end{abstract}

\maketitle

\section{Introduction}

The notion of derivations from the analytic theory was introduced in 1957 by Posner (\cite{Pos1}) to a prime ring
$(R, +, \cdot)$ as a map $d:R\longrightarrow R$ satisfying the conditions $d(x+y)=d(x)+d(y)$ and 
$d(x\cdot y)=d(x)\cdot y+x\cdot d(y)$, for all $x, y\in R$. Since the derivation proved to be helpful for studying 
the properties of algebraic systems, this notion has been defined and studied by many authors for the cases of 
lattices (\cite{Sza1}, \cite{Fer1}, \cite{Xin1}, \cite{Xin2}) and algebras of fuzzy logic: 
MV-algebras (\cite{Als1}), \cite{Yaz1}, \cite{Ghor1}), BCI-algebras (\cite{Jun1}, \cite{Abuj1}, \cite{Abuj2}), commutative residuated lattice (\cite{He1}), BCC-algebras (\cite{Prab1}, \cite{Als2}), BE-algebras (\cite{Kim1})), basic algebras (\cite{Krna1}) and pseudo-MV algebras (\cite{Rac7}). 

The aim of this paper is to introduce the concept of derivations on pseudo-BCI algebras and to investigate their 
properties. We define the type I and type II implicative derivations on pseudo-BCI algebras, we introduce the notion 
of a regular implicative derivation and we give a characterization of regular implicative derivations of type II. 
The notion of isotone implicative derivations is also defined, and it is proved that any regular implicative 
derivation of type II is isotone. 
For an implicative derivation $d$ on a pseudo-BCK algebra $A$ we define the notion of a $d$-invariant deductive 
system of $A$ proving that $d$ is a regular implicative derivation of type II if and only if every deductive 
system on $A$ is $d$-invariant. 
We investigate the particular case of implicative derivations on the $p$-semisimple pseudo-BCI algebras and 
we prove that a pseudo-BCI algebra is $p$-semisimple if and only if the only regular derivation of type II is the identity map. Another main result consists of proving that the set of all implicative derivations 
of a $p$-semisimple pseudo-BCI algebra forms a commutative monoid with respect to function composition.
Two types of symmetric derivations on pseudo-BCI algebras are also introduced and it is proved that in the case of 
$p$-semisimple pseudo-BCI algebras the sets of type II implicative derivations and type II symmetric derivations 
are equal.

\bigskip

\section{Preliminaries}

Pseudo-BCK algebras were introduced by G. Georgescu and A. Iorgulescu in \cite{Geo15} as algebras 
with "two differences", a left- and right-difference, and with a constant element $0$ as the least element. Nowadays pseudo-BCK algebras are used in a dual form, with two implications, $\ra$ and $\rs$ and with one constant element $1$, that is the greatest element. Thus such pseudo-BCK algebras are in the "negative cone" and are also called "left-ones". Pseudo-BCK algebras were intensively studied in \cite{Ior14}, \cite{Ior15}, \cite{Ior1}, \cite{Kuhr6}, \cite{Ciu4}. 
Pseudo-BCI algebras were defined by \cite{Dud1} as generalizations of pseudo-BCK algebra and BCI-algebras, and they 
form an important tool for an algebraic axiomatization of implicational fragment of non-classical logic (\cite{Dym3}).  
In this section we recall some basic notions and results regarding pseudo-BCI algebras from \cite{Dud1}, \cite{Dym1}-\cite{Dym7}, \cite{Chaj1}-\cite{Chaj2}, \cite{Eman1}. 

\begin{Def} \label{psBE-40-10} $\rm($\cite{Chaj2}$\rm)$ A \emph{pseudo-BCI algebra} is a structure 
${\mathcal A}=(A,\ra,\rs,1)$ of type $(2,2,0)$ satisfying the following axioms, for all $x, y, z \in A:$ \\
$(psBCI_1)$ $(x \ra y) \rs [(y \ra z) \rs (x \ra z)]=1;$ \\
$(psBCI_2)$ $(x \rs y) \ra [(y \rs z) \ra (x \rs z)]=1;$ \\
$(psBCI_3)$ $1 \ra x = x;$ \\
$(psBCI_4)$ $1 \rs x = x;$ \\
$(psBCI_5)$ $(x\ra y =1$ and $y \ra x=1)$ implies $x=y$. 
\end{Def}

It is proved in \cite[Lemma 2.1]{Chaj2} that $x\ra y=1$ iff $x\rs y=1$, so that axiom $(psBCI_5)$ is equivalent to the following axiom: \\
$(psBCI^{\prime}_5)$ $(x\rs y =1$ and $y \rs x=1)$ implies $x=y$. \\
Every pseudo-BCI algebra satisfying $x\ra y=x\rs y$ for all $x, y\in A$ is a BCI-algebra, and every pseudo-BCI algebra 
satisfying $x\le 1$ for all $x\in A$ is a pseudo-BCK algebra. 
A pseudo-BCI algebra is said to be \emph{proper} if it is not a BCI-algebra and it is not a pseudo-BCK algebra.  
In a pseudo-BCI algebra $(A,\ra,\rs,1)$, one can define a binary relation $``\leq"$ by \\
$\hspace*{2cm}$ $x\leq y$ iff $x\ra y=1$ iff $x\rs y=1$, for all $x, y\in A$. \\
We will refer to $(A,\ra,\rs,1)$ by its universe $A$. 

\begin{lemma} \label{psBE-40-20} $\rm($\cite{Eman1}$\rm)$ Let $(A,\ra,\rs,1)$ be a pseudo-BCI algebra. Then the following hold for all $x, y, z\in A$: \\
$(1)$ $x\ra x=x\rs x=1;$ \\
$(2)$ $x\le (x\ra y)\rs y$ and $x\le (x\rs y)\ra y;$ \\
$(3)$ $x\ra y=1$ iff $x\rs y=1;$ \\
$(4)$ $x\le y\ra z$ iff $y\le x\rs z;$ \\
$(5)$ $x \leq y$ implies $y \ra z \leq x \ra z$ and $y \rs z \leq x \rs z;$ \\
$(6)$ $x \leq y$ implies $z \ra x \leq z \ra y$ and $z\rs x \leq z \rs y;$ \\
$(7)$ $x\ra y\le (z\ra x)\ra (z\ra y)$ and $x\rs y\le (z\rs x)\rs (z\rs y);$ \\
$(8)$ $x\ra (y\rs z)=y\rs (x\ra z);$ \\
$(9)$ if $1\le x$, then $x=1;$ \\
$(10)$ if $x\le y$ and $y\le z$, then $x\le z;$ \\
$(11)$ $x\ra 1=x\rs 1;$ \\
$(12)$ $(x\ra y)\ra 1=(x\ra 1)\rs (y\ra 1)$ and $(x\rs y)\rs 1=(x\rs 1)\ra (y\rs 1)$.
\end{lemma}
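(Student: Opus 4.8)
The plan is to derive all twelve identities from the five axioms together with the already‑quoted equivalence ``$x\ra y=1$ iff $x\rs y=1$'' (this is item (3)) and the definition of $\le$, ordering the work so that each item may reuse the earlier ones. I would first clear away the essentially one‑line items. Item (9) is immediate: $1\le x$ unfolds to $1\ra x=1$, and $(psBCI_3)$ rewrites the left side as $x$. For item (1), instantiating $(psBCI_2)$ at $x:=1$, $y:=1$, $z:=x$ and simplifying the leading $1\ra$ and $1\rs$ with $(psBCI_3)$ and $(psBCI_4)$ collapses the whole expression to $x\ra x=1$, and symmetrically (from $(psBCI_1)$, or via (3)) to $x\rs x=1$; once reflexivity is available, $(psBCI_5)$ says $\le$ is antisymmetric. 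Item (10): substituting $x\ra y=1$ and $y\ra z=1$ into $(psBCI_1)$ and applying $(psBCI_4)$ twice leaves $x\ra z=1$. Item (2): instantiate $(psBCI_1)$ and $(psBCI_2)$ at $x:=1$ and simplify. Item (5): feed $x\ra y=1$ into $(psBCI_1)$, remove the leading $1\rs$ with $(psBCI_4)$, and read off $y\ra z\le x\ra z$; the $\rs$ half is the mirror statement read from $(psBCI_2)$.

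Next I would establish the exchange law (4), which is the hinge for what follows. Rather than chase implications, I would argue: if $x\le y\ra z$ then (5) gives $(y\ra z)\rs z\le x\rs z$, while (2) gives $y\le(y\ra z)\rs z$, so (10) yields $y\le x\rs z$; the converse is the same argument with $\ra$ and $\rs$ interchanged. With (4) in hand, item (7) is just $(psBCI_1)$, instantiated at $(z,\,x,\,y)$, read through the exchange, and item (6) follows from (7) together with (9).

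For the mixed‑arrow identity (8), $x\ra(y\rs z)=y\rs(x\ra z)$, the plan is to prove only one inequality by hand: $(psBCI_1)$ at $(x,\,y\rs z,\,z)$ gives $x\ra(y\rs z)\le\bigl((y\rs z)\ra z\bigr)\rs(x\ra z)$, and since $y\le(y\rs z)\ra z$ by (2), monotonicity (5) and transitivity (10) push the right‑hand side down to $y\rs(x\ra z)$. I would then get the reverse inequality for free, because the whole axiom system is invariant under the swap $\ra\leftrightarrow\rs$ (with $(psBCI_1)\leftrightarrow(psBCI_2)$, $(psBCI_3)\leftrightarrow(psBCI_4)$, and $(psBCI_5)\leftrightarrow(psBCI^{\prime}_5)$): the inequality just proved then has a mirror $x\rs(y\ra z)\le y\ra(x\rs z)$, and a relabelling of this mirror is precisely the missing direction, so antisymmetry ((1) plus $(psBCI_5)$) delivers the equality. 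Finally items (11) and (12) fall out of (8): taking $y:=z:=x$ in (8) and using $x\ra x=x\rs x=1$ gives $x\ra 1=x\rs 1$, and (12) follows by a short further computation with (8), its mirror, and (11).

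The step I expect to be the real obstacle is (8): everything else is bookkeeping with a handful of axiom instantiations, but (8) is the one place where a naive bidirectional chase stalls, and it becomes clean only once one notices that the $\ra\leftrightarrow\rs$ symmetry lets one do half the work and quote the mirror for the rest. Turning (12) into closed form afterwards is the fiddliest piece of arithmetic, though still routine once (8) and (11) are available.
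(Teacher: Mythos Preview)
The paper does not prove this lemma at all; it is simply quoted with a citation to Emanovsk\'{y}--K\"{u}hr, so there is no in-paper argument to compare your proposal against. That said, your plan stands on its own and is essentially sound for items (1)--(11): each of your axiom instantiations checks out, the ordering is well chosen (in particular deriving (4) before (7) and (6) is the right call), and your observation that the $\ra\leftrightarrow\rs$ symmetry of the axiom system lets you prove only one inequality in (8) and quote its mirror for the other is exactly the clean way to handle that item.

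The one place where your sketch is thinner than it should be is (12). One direction does fall out of (8), its mirror, and (11) as you say: from the mirror of (8) and (11) one gets $(x\ra 1)\rs(y\ra 1)=y\ra((x\ra 1)\rs 1)=y\ra x^{**}$ (writing $x^{*}=x\ra 1$), and then (7) together with $x\ra x^{**}=1$ (from (2)) gives $y\ra x^{**}\le(x\ra y)\ra(x\ra x^{**})=(x\ra y)\ra 1$. But the reverse inequality $(x\ra y)\ra 1\le y\ra x^{**}$ does not drop out of a single application of (8) and (11); the obvious monotonicity moves either go the wrong way or circle back. You will need a little more---for instance, first establishing $a^{***}=a^{*}$ (immediate from (2) and (5)) and then arguing through the ``double-negation fixed'' elements---so it would be worth writing that step out rather than leaving it as ``a short further computation.'' This is not a gap in the sense of a wrong idea, but it is the one place where ``routine'' undersells the work required.
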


For any pseudo-BCI algebra $A$ the set $\K(A)=\{x\in A\mid x\le 1\}$ is a subalgebra of $A$ called the 
\emph{pseudo-BCK part} of $A$, since $(\K(A),\ra,\rs,1)$ is a pseudo-BCK algebra.  
An element $a$ of a pseudo-BCI algebra $A$ is called an \emph{atom} if $a\le x$ implies $x=a$, for 
all $x\in A$. Denote by $\At(A)$ the set of all atoms of $A$ and it is proved in \cite{Dym8} that 
$\At(A)=\{x\in A\mid x=(x\ra 1)\ra 1\}$. 
For any $a\in A$ we denote $V(a)=\{x\in X\mid x\le a\}$ and it is obvious that $V(a)\neq \emptyset$, since 
$a\le a$ gives $a\in V(a)$. If $a\in \At(A)$, then the set $V(a)$ is called a \emph{branch} of $A$ determined by 
the element $a$. According to \cite{Dym2} we have: $(1)$ branches determined by different elements are disjoints; 
$(2)$ a pseudo-BCI algebra is a set-theoretic union of branches; $(3)$ comparable elements are in the same branch; 
$(4)$ the elements $x$ and $y$ belong to the same branch if and only if $x\ra y\in V(1)$ or equivalently 
$x\rs y\in V(1)$. 
A pseudo-BCI algebra $A$ is \emph{$p$-semisimple} if $x\le 1$ implies $x=1$, for all $x\in A$, that is $\K(A)=\{1\}$.   

\begin{prop} \label{psBE-40-30} $\rm($\cite{Dym2},\cite{Dym5}$\rm)$ 
In any pseudo-BCI algebra $A$ the following are equivalent, for all $x, y\in A$: \\
$(a)$ $A$ is $p$-semisimple; \\
$(b)$ if $x\le y$, then $x=y;$ \\
$(c)$ $(x\ra y)\rs y=(x\rs y)\ra y=x;$ \\
$(d)$ $(x\ra 1)\rs 1=(x\rs 1)\ra 1=x;$ \\
$(e)$ $(x\ra 1)\rs y=(y\rs 1)\ra x;$ \\ 
$(f)$ $x\ra a=y\ra a$ implies $x=y;$ \\
$(g)$ $x\rs a=y\rs a$ implies $x=y;$ \\
$(h)$ $A=\At(A);$ \\
$(i)$ $(A,\cdot,^{-1},1)$ is a group, where $x\cdot y=(x\ra 1)\rs y=(y\rs 1)\ra x$, $x^{-1}=x\ra 1=x\rs 1$, 
$x\ra y=y\cdot x^{-1}$ and $x\rs y=x^{-1}\cdot y$, for all $x, y\in A$. 
\end{prop}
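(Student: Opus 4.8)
\noindent
The plan is to prove the single cycle $(a)\Rightarrow(b)\Rightarrow(c)\Rightarrow(d)\Rightarrow(a)$, which pins down the four ``core'' conditions, and then to hook each of the remaining conditions onto this class by a one‑ or two‑line implication. For $(a)\Rightarrow(b)$, assume $x\le y$; by Lemma~\ref{psBE-40-20}(5) with $z:=x$ together with Lemma~\ref{psBE-40-20}(1) we get $y\ra x\le x\ra x=1$, so $y\ra x\in\K(A)=\{1\}$, i.e.\ $y\le x$, whence $x=y$ by $(psBCI_5)$. For $(b)\Rightarrow(c)$ one just promotes the two inequalities of Lemma~\ref{psBE-40-20}(2) to equalities. $(c)\Rightarrow(d)$ is the instance $y:=1$. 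And $(d)\Rightarrow(a)$: if $x\le1$ then $x\ra1=1$, so $(x\ra1)\rs1=1\rs1=1$, and $(d)$ forces $x=1$.

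\noindent
Next I would attach $(f),(g),(h),(e)$. For $(f)$: from $(c)$, if $x\ra a=y\ra a$ then $(x\ra a)\rs a=(y\ra a)\rs a$, i.e.\ $x=y$ by $(c)$, so $(c)\Rightarrow(f)$; conversely, if $x\le1$ then $x\ra1=1=1\ra1$, so $(f)$ (with parameter $1$) gives $x=1$, i.e.\ $(f)\Rightarrow(a)$ --- the argument for $(g)$ is the mirror image with $\rs$. For $(h)$: by Lemma~\ref{psBE-40-20}(11) we have $(x\ra1)\rs1=(x\ra1)\ra1$, so $(d)$ is verbatim the assertion that every element of $A$ lies in $\At(A)=\{x\mid x=(x\ra1)\ra1\}$; this gives $(d)\Leftrightarrow(h)$. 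For $(e)$: setting $y:=1$ in $(e)$ yields $(x\ra1)\rs1=x$, and Lemma~\ref{psBE-40-20}(11) then gives $(x\rs1)\ra1=x$ as well, so $(e)\Rightarrow(d)$; in the other direction, $(d)\Rightarrow(e)$ is the short computation $(y\rs1)\ra x=(y\rs1)\ra\bigl((x\ra1)\rs1\bigr)=(x\ra1)\rs\bigl((y\rs1)\ra1\bigr)=(x\ra1)\rs y$, where the middle equality is the exchange identity Lemma~\ref{psBE-40-20}(8) and the two outer ones use $(d)$.

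\noindent
It remains to bring in $(i)$. Assuming $(d)$, define $x\cdot y:=(x\ra1)\rs y$ --- this already equals $(y\rs1)\ra x$ by the implication $(d)\Rightarrow(e)$ just established --- and $x^{-1}:=x\ra1=x\rs1$. The unit laws are immediate: $1\cdot x=1\rs x=x$ by $(psBCI_4)$, and $x\cdot1=(x\ra1)\rs1=x$ by $(d)$. The inverse laws: $x\cdot x^{-1}=(x\ra1)\rs(x\ra1)=1$ by Lemma~\ref{psBE-40-20}(1), and $x^{-1}\cdot x=\bigl((x\ra1)\ra1\bigr)\rs x=x\rs x=1$ using Lemma~\ref{psBE-40-20}(11) and $(d)$. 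The identities $x\ra y=y\cdot x^{-1}$ and $x\rs y=x^{-1}\cdot y$ are then routine rewrites with Lemma~\ref{psBE-40-20}(11),(12). The step I expect to be the real obstacle is associativity of $\cdot$: I would first show $\bigl((x\ra1)\rs y\bigr)\ra1=x\ra(y\ra1)$ via Lemma~\ref{psBE-40-20}(11),(12) and $(d)$, so that $(x\cdot y)\cdot z=\bigl(x\ra(y\ra1)\bigr)\rs z$, and then reconcile this with $x\cdot(y\cdot z)=(x\ra1)\rs\bigl((y\ra1)\rs z\bigr)$ by iterated use of the exchange identity Lemma~\ref{psBE-40-20}(8); this is where the non‑commutative bookkeeping is most delicate, and if one prefers to avoid it, the group/$p$-semisimple correspondence of \cite{Dym2} may be invoked instead. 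Finally $(i)\Rightarrow(a)$: $x\le1$ means $x\ra1=1$, i.e.\ $x^{-1}=1$, i.e.\ $x=1$, closing all the loops.
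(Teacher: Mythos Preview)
The paper does not supply its own proof of this proposition: it is stated as a quotation from \cite{Dym2,Dym5} and is followed immediately by Proposition~\ref{psBE-40-40} with no intervening proof environment. So there is nothing in the paper to compare your argument against; your proposal is already more than the paper offers.

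On its own merits your argument is sound. The cycle $(a)\Rightarrow(b)\Rightarrow(c)\Rightarrow(d)\Rightarrow(a)$ is clean and correct, and the way you graft $(e),(f),(g),(h)$ onto it is exactly right --- in particular the computation $(y\rs1)\ra x=(y\rs1)\ra((x\ra1)\rs1)=(x\ra1)\rs((y\rs1)\ra1)=(x\ra1)\rs y$ for $(d)\Rightarrow(e)$ is the intended one. The only soft spot is, as you yourself flag, the associativity of $\cdot$ in $(i)$. Your outline (reduce via Lemma~\ref{psBE-40-20}(11),(12) and then invoke the exchange law~(8)) is workable, but a single application of~(8) will not close it; one convenient route is to first observe that under $(d)$ the map $x\mapsto x\ra1$ is an involution interchanging $\ra$ and $\rs$ (this is Lemma~\ref{psBE-40-20}(12) read through $(d)$), and then combine this with~(8) --- equivalently, use that every element is an atom and appeal to the identities of Proposition~\ref{psBE-40-40}$(i)$,$(j)$. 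Either way the bookkeeping goes through, and your fallback of citing \cite{Dym2} is in any case what the paper itself does.
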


\begin{prop} \label{psBE-40-40} $\rm($\cite{Dym8}$\rm)$ 
In any pseudo-BCI algebra $A$ the following are equivalent, for all $a, x, y\in A$: \\
$(a)$ $a\in \At(A);$ \\
$(b)$ $(a\ra x)\rs x=(a\rs x)\ra x=a;$ \\
$(c)$ $x\ra a=(a\ra x)\rs 1;$ \\
$(d)$ $x\rs a=(a\rs x)\ra 1;$ \\
$(e)$ $x\ra a=(a\ra y)\rs (x\ra y);$ \\ 
$(f)$ $x\rs a=(a\rs y)\ra (x\rs y);$ \\
$(g)$ $x\ra a=((x\ra a)\rs y)\ra y;$ \\
$(h)$ $x\rs a=((x\rs a)\ra y)\rs y;$ \\
$(i)$ $x\ra a=(a\ra 1)\rs (x\ra 1);$ \\
$(j)$ $x\rs a=(a\rs 1)\ra (x\rs 1);$ \\ 
$(k)$ $(a\ra 1)\rs 1=(a\rs 1)\ra 1=a$.
\end{prop}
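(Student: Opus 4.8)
The plan is to treat (a)~$\Leftrightarrow$~(k) as a hub: identify (k) with membership in $\At(A)$, deduce each of (b)--(j) from (a), and recover (k) from each of (b)--(j) by specialising the free variables to~$1$. For the hub, apply Lemma \ref{psBE-40-20}(11) to $a$ and then to $a\ra 1=a\rs 1$: the four elements $(a\ra 1)\ra 1$, $(a\ra 1)\rs 1$, $(a\rs 1)\ra 1$, $(a\rs 1)\rs 1$ all coincide, so (k) says exactly that $a=(a\ra 1)\ra 1$, which is the description of $\At(A)$ recalled before Proposition \ref{psBE-40-30}; hence (a)~$\Leftrightarrow$~(k). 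The same trick, together with Lemma \ref{psBE-40-20}(2),(5) (giving $t\le(t\ra 1)\ra 1$ and monotonicity) and the antisymmetry of $\le$, yields the triple-negation identity $((t\ra 1)\ra 1)\ra 1=t\ra 1$; thus $\At(A)$ is the image of $t\mapsto t\ra 1$ (equivalently of $t\mapsto t\rs 1$), so in particular $x\ra 1,\ x\rs 1\in\At(A)$ for every $x$.

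Assuming (a): since $a\le(a\ra x)\rs x$ and $a\le(a\rs x)\ra x$ by Lemma \ref{psBE-40-20}(2), the atom property forces equality, which is (b). From (b) and Lemma \ref{psBE-40-20}(8) I would compute $x\ra a=x\ra\big((a\ra x)\rs x\big)=(a\ra x)\rs(x\ra x)=(a\ra x)\rs 1$, which is (c); the same computation with $y$ in place of $x$ gives $x\ra a=(a\ra y)\rs(x\ra y)$, which is (e); and using $a=(a\ra 1)\rs 1$ instead of (b) gives $x\ra a=(a\ra 1)\rs(x\ra 1)$, which is (i). Conditions (d), (f) and (j) are the left--right mirror images, obtained the same way from the dual of Lemma \ref{psBE-40-20}(8).

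For (g) and (h) I would first show that the translates of an atom are again atoms: a computation with Lemma \ref{psBE-40-20}(8),(11),(12) and the triple-negation identity gives $((x\ra a)\ra 1)\ra 1=x\ra a$ and $((x\rs a)\ra 1)\ra 1=x\rs a$, so $x\ra a,\ x\rs a\in\At(A)$. Applying the implication (a)~$\Rightarrow$~(b) already proved, now to the atom $x\ra a$, yields $\big((x\ra a)\rs y\big)\ra y=x\ra a$, which is (g); applying it to $x\rs a$ yields $\big((x\rs a)\ra y\big)\rs y=x\rs a$, which is (h). This closure property of $\At(A)$ is the only step that is more than routine manipulation, and I expect it to be the main obstacle.

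To close the circle, each of (b)--(j) implies (k): putting $x:=1$ (and $y:=1$ where it occurs) and using $(psBCI_3)$, $(psBCI_4)$ and Lemma \ref{psBE-40-20}(1), one reads off $a=(a\ra 1)\rs 1$ from (b), (c), (e), (h), (i) and $a=(a\rs 1)\ra 1$ from (d), (f), (g), (j); by the hub observation each of these is exactly (k). Combined with (a)~$\Leftrightarrow$~(k) and (a)~$\Rightarrow$~(b)--(j), this gives the equivalence of all eleven conditions.
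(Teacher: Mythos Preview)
The paper does not prove this proposition; it is quoted from \cite{Dym8} without argument, so there is no in-paper proof to compare against. Your proposal is correct and complete as a plan: the hub (a)~$\Leftrightarrow$~(k) via Lemma \ref{psBE-40-20}(11) and the description $\At(A)=\{x\mid x=(x\ra 1)\ra 1\}$ recalled in the paper works; the derivations of (b)--(f), (i), (j) from (a) using Lemma \ref{psBE-40-20}(2),(8) are routine; and the specialisations $x:=1$, $y:=1$ do recover (k) from each of (b)--(j).

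For the step you flag as the main obstacle---showing $x\ra a,\ x\rs a\in\At(A)$ so that (a)~$\Rightarrow$~(b) applied to these elements yields (g),(h)---there is a shorter route than the direct computation with (8),(11),(12) you outline. Once (c) is in hand you have $x\ra a=(a\ra x)\rs 1=(a\ra x)\ra 1$ by Lemma \ref{psBE-40-20}(11), and your triple-negation identity already shows that every element of the form $t\ra 1$ lies in $\At(A)$; likewise $x\rs a\in\At(A)$ via (d). The paper records this closure fact as Corollary \ref{psBE-40-50}, derived there in the opposite direction from (g),(h),(k); your independent derivation avoids any circularity.
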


\begin{cor} \label{psBE-40-50} If $a \in \At(A)$, then $x\ra a, x\rs a\in \At(A)$, for all $x\in A$.  
\end{cor}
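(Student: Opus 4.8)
The plan is to use the atom characterisation from Proposition~\ref{psBE-40-40}: an element $b\in A$ lies in $\At(A)$ exactly when $(b\ra 1)\rs 1=(b\rs 1)\ra 1=b$ (part (k)); and by Lemma~\ref{psBE-40-20}(11) the two stated equalities coincide, so it is enough to check the single identity $(b\ra 1)\ra 1=b$. Hence the whole argument reduces to a short computation with the operation $(\,\cdot\,)\ra 1$, and to prove $x\ra a\in\At(A)$ I would verify $\bigl((x\ra a)\ra 1\bigr)\ra 1=x\ra a$.

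First I would record a structural fact valid in every pseudo-BCI algebra: for all $u\in A$ one has $u\le (u\ra 1)\ra 1$ — this is Lemma~\ref{psBE-40-20}(2) with $y=1$, combined with Lemma~\ref{psBE-40-20}(11) to rewrite $\rs 1$ as $\ra 1$ — and consequently $\bigl((u\ra 1)\ra 1\bigr)\ra 1=u\ra 1$. Indeed, applying Lemma~\ref{psBE-40-20}(5) to $u\le (u\ra 1)\ra 1$ gives $\bigl((u\ra 1)\ra 1\bigr)\ra 1\le u\ra 1$, while the previous inequality with $u$ replaced by $u\ra 1$ gives $u\ra 1\le \bigl((u\ra 1)\ra 1\bigr)\ra 1$; antisymmetry $(psBCI_5)$ then yields the equality.

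Next I would exploit $a\in\At(A)$: by Proposition~\ref{psBE-40-40}(i), $z\ra a=(a\ra 1)\rs(z\ra 1)$ for every $z\in A$, so the value of $z\ra a$ depends on $z$ only through $z\ra 1$; moreover $(a\ra 1)\ra 1=a$ by part (k). Now set $b=x\ra a$ and compute, using Lemma~\ref{psBE-40-20}(11),(12): first $b\ra 1=(x\ra a)\ra 1=(x\ra 1)\rs(a\ra 1)$, and then $(b\ra 1)\ra 1=\bigl((x\ra 1)\rs(a\ra 1)\bigr)\rs 1=\bigl((x\ra 1)\rs 1\bigr)\ra\bigl((a\ra 1)\rs 1\bigr)=\bigl((x\ra 1)\ra 1\bigr)\ra a$, where the last step uses $(a\ra 1)\rs 1=(a\ra 1)\ra 1=a$. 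By the structural fact, $\bigl((x\ra 1)\ra 1\bigr)\ra 1=x\ra 1$, so by the observation that $z\ra a$ depends only on $z\ra 1$ we get $\bigl((x\ra 1)\ra 1\bigr)\ra a=x\ra a=b$. Thus $(b\ra 1)\ra 1=b$, i.e.\ $x\ra a\in\At(A)$. The claim $x\rs a\in\At(A)$ follows by the mirror-image computation, using the second halves of Lemma~\ref{psBE-40-20}(11),(12), Proposition~\ref{psBE-40-40}(j), and the (equivalent) atom characterisation $\At(A)=\{y\mid (y\rs 1)\rs 1=y\}$.

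I expect no serious obstacle here — the corollary is essentially a bookkeeping exercise in the identities of Lemma~\ref{psBE-40-20} and the atom characterisations of Proposition~\ref{psBE-40-40}. The only point requiring care is the chain of equalities for $(b\ra 1)\ra 1$: one must apply each instance of Lemma~\ref{psBE-40-20}(12) to the correct subterm (first to $(x\ra a)\ra 1$, then to $\bigl((x\ra 1)\rs(a\ra 1)\bigr)\rs 1$) and insert the Lemma~\ref{psBE-40-20}(11) rewrites of $\ra 1$ versus $\rs 1$ at the right places, so that the atom hypothesis on $a$ can be fed in as $(a\ra 1)\ra 1=a$.
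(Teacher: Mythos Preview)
Your argument is correct, but it is considerably more laborious than the paper's. The paper simply sets $y:=1$ in Proposition~\ref{psBE-40-40}(g),(h): for $a\in\At(A)$ part~(g) reads $x\ra a=\bigl((x\ra a)\rs 1\bigr)\ra 1$, which is precisely the atom criterion~(k) for the element $x\ra a$; part~(h) handles $x\rs a$ symmetrically. That is the entire proof.

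By contrast, you rebuild this conclusion from scratch via Lemma~\ref{psBE-40-20}(11),(12) and Proposition~\ref{psBE-40-40}(i), together with the auxiliary identity $\bigl((u\ra 1)\ra 1\bigr)\ra 1=u\ra 1$. Everything you do is valid, and your ``$z\ra a$ depends only on $z\ra 1$'' observation is a nice way to package Proposition~\ref{psBE-40-40}(i); but you are effectively reproving a special case of part~(g) rather than invoking it. The lesson is simply that the list in Proposition~\ref{psBE-40-40} already contains the exact identity you need, so there is no reason to descend to the level of Lemma~\ref{psBE-40-20}(12).
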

\begin{proof}
It follows taking $y:=1$ in Proposition \ref{psBE-40-40}$(g)$,$(h)$ and applying $(k)$. 
\end{proof}

Let $(A,\ra, \rs,1)$ be a pseudo-BCI algebra. Denote: $x\Cup_1 y=(x\ra y)\rs y$ and $x\Cup_2 y=(x\rs y)\ra y$, 
for all $x, y\in A$. 

\begin{lemma} \label{psBE-110} In any pseudo-BCI algebra $A$ the following hold for all $x, x_1, x_2, y\in A:$ \\ 
$(1)$ $1\Cup_1 x=1\Cup_2 x=1;$ \\
$(2)$ $x\le y$ iff $x\Cup_1 y=y$ iff $x\Cup_2 y=y;$ \\
$(3)$ $x\Cup_1 x=x\Cup_2 x=x;$ \\
$(4)$ if $A$ is $p$-semisimple, then $x\Cup_1 1=x\Cup_2 1=x;$ \\
$(5)$ $x\le x\Cup_1 y$ and $x\le x\Cup_2 y;$ \\
$(6)$ $x_1\le x_2$ implies $x_1\Cup_1 y\le x_2\Cup_1 y$ and $x_1\Cup_2 y\le x_2\Cup_2 y;$ \\
$(7)$ $x\Cup_1 y\ra y=x\ra y$ and $x\Cup_2 y\rs y=x\rs y$.
\end{lemma}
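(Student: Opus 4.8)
The plan is to prove Lemma~\ref{psBE-110} statement by statement, treating each part as a short direct computation from the axioms and from Lemma~\ref{psBE-40-20}, and reusing earlier parts of this same lemma whenever convenient. For $(1)$, I would start from $1\ra y=y$ (axiom $(psBCI_3)$), so $1\Cup_1 y=(1\ra y)\rs y=y\rs y=1$ by Lemma~\ref{psBE-40-20}$(1)$, and symmetrically $1\Cup_2 y=(1\rs y)\ra y=y\ra y=1$ using $(psBCI_4)$. For $(3)$, I would use Lemma~\ref{psBE-40-20}$(1)$ to get $x\ra x=1$, hence $x\Cup_1 x=(x\ra x)\rs x=1\rs x=x$, and dually for $\Cup_2$.

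For $(2)$, the forward direction is immediate: if $x\le y$ then $x\ra y=1$, so $x\Cup_1 y=1\rs y=y$; dually $x\rs y=1$ gives $x\Cup_2 y=y$. For the converse, suppose $x\Cup_1 y=y$, i.e. $(x\ra y)\rs y=y$; I would combine this with Lemma~\ref{psBE-40-20}$(2)$, which gives $x\le (x\ra y)\rs y=y$, so $x\le y$. The $\Cup_2$ case is identical using the other half of $(2)$. Part $(5)$ then follows directly from Lemma~\ref{psBE-40-20}$(2)$: $x\le (x\ra y)\rs y=x\Cup_1 y$ and $x\le (x\rs y)\ra y=x\Cup_2 y$. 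Part $(4)$ is just Proposition~\ref{psBE-40-30}: when $A$ is $p$-semisimple, condition $(c)$ (or $(d)$ with $y:=1$) says precisely $(x\ra y)\rs y=(x\rs y)\ra y=x$; taking $y:=1$ gives $x\Cup_1 1=x\Cup_2 1=x$.

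For the monotonicity statement $(6)$, assume $x_1\le x_2$. By Lemma~\ref{psBE-40-20}$(5)$, $x_1\le x_2$ implies $x_2\ra y\le x_1\ra y$, and then by Lemma~\ref{psBE-40-20}$(5)$ again (applied to $\rs$, since $x_2\ra y\le x_1\ra y$) we get $(x_1\ra y)\rs y\le (x_2\ra y)\rs y$, i.e. $x_1\Cup_1 y\le x_2\Cup_1 y$; the $\Cup_2$ statement is the mirror image using the $\rs$-clauses of Lemma~\ref{psBE-40-20}$(5)$ and then the $\ra$-clause. Finally, for $(7)$, I want $((x\ra y)\rs y)\ra y=x\ra y$. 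The inequality $\ge$ comes from applying Lemma~\ref{psBE-40-20}$(2)$ (the $\rs$-version $x\le (x\rs y)\ra y$) with $x$ replaced by $x\ra y$, giving $x\ra y\le ((x\ra y)\rs y)\ra y$; the inequality $\le$ comes from $x\le (x\ra y)\rs y$ (Lemma~\ref{psBE-40-20}$(2)$) together with Lemma~\ref{psBE-40-20}$(5)$, which yields $((x\ra y)\rs y)\ra y\le x\ra y$. Antisymmetry $(psBCI_5)$ closes it, and the $\Cup_2$ half is dual. I do not anticipate a serious obstacle here; the only thing requiring care is bookkeeping the left/right ($\ra$ versus $\rs$) versions of Lemma~\ref{psBE-40-20}$(2)$ and $(5)$ so that each application is the legitimate one, and making sure part $(7)$ uses antisymmetry rather than $p$-semisimplicity (which is not assumed in $(7)$).
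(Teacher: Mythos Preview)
Your proposal is correct and matches the paper's approach: the paper simply records that ``the proof is straightforward'' and omits all details, and what you have written is exactly the routine verification from the axioms and Lemma~\ref{psBE-40-20} that this phrase is pointing to. Each of your seven arguments is valid as stated, including the two-sided estimate plus antisymmetry for $(7)$.
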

\begin{proof} The proof is straightforward.
\end{proof}

A pseudo-BCK algebra $A$ is said to be \emph{commutative} if it satisfies the following identities 
for all $x, y\in A$: \\ 
$(comm_1)$ $x\Cup_1 y=y\Cup_1 x$, \\
$(comm_2)$ $x\Cup_2 y=y\Cup_2 x$. \\
Note that if a pseudo-BCI algebra $A$ satisfies $(comm_1)$ and $(comm_2)$ for all $x, y\in A$, then $A$ is a 
pseudo-BCK algebra. Indeed, $x\le (x\ra 1)\rs 1=(1\ra x)\rs x=1$, for all $x\in A$. 
A pseudo-BCI algebra satisfying $(comm_1)$ and $(comm_2)$ for all $x$ and $y$ belonging to the same branch is 
called \emph{branchwise commutative}, and it is called \emph{commutative} if it satisfies the quasi-identities \\
$(comm_3)$ $x\Cup_1 y=x\Cup_2 y=x$, whenever $y\le x$. \\
It was proved in \cite{Dym1} that a pseudo-BCI algebra is commutative if and only if it is branchwise commutative. 
Moreover, each branch of a commutative pseudo-BCI algebra is a semilattice with the join $\vee$ defined by 
$x\vee y=x\Cup_1 y=x\Cup_2 y$. 
It is known that any $p$-semisimple pseudo-BCI algebra is commutative (\cite{Dym1}). \\
A subset $D$ of a pseudo-BCI algebra $A$ is called a \emph{deductive system} of $A$ if it satisfies 
the following axioms: 
$(ds_1)$ $1\in D,$ 
$(ds_2)$ $x\in D$ and $x\ra y\in D$ imply $y\in D$. \\
A subset $D$ of $A$ is a deductive system if and only if it satisfies $(ds_1)$ and the axiom: \\
$(ds^{\prime}_2)$ $x\in D$ and $x\rs y\in D$ imply $y\in D$. \\
Denote by ${\mathcal DS}(A)$ the set of all deductive systems of $A$. 
A deductive system $D$ of $A$ is \emph{proper} if $D\ne A$. 
A deductive system $D$ of a pseudo-BCK algebra $A$ is said to be \emph{compatible} if it satisfies the condition:
$(ds_3)$ for all $x, y \in A$, $x \ra y \in D$ iff $x \rs y \in D$. \\
Denote by ${\mathcal DS_c}(A)$ the set of all compatible deductive systems of $A$. 
A deductive system is \emph{closed} if it is subalgebra of $A$. 
Denote by $\mathcal{CON}(A)$ the set of all congruences of $A$. 
We say that $\theta\in \mathcal{CON}(A)$  is a \emph{relative congruence} of $A$ if the quotient algebra 
$(A/\theta,\ra,\rs,[1]_{\theta})$ is a pseudo-BCI algebra. 
It was proved in \cite{Dym4} that the relative congruences of $A$ correspond one-to-one to closed compatible deductive 
systems of $A$. 
For details regarding the deductive systems and congruence relations on a pseudo-BCI algebra we refer the reader to \cite{Dym4}.

\bigskip

\section{Implicative derivation operators on pseudo-BCI algebras}

In this section we define the type I and type II implicative derivations on pseudo-BCI algebras, and we 
investigate their properties. The notions of regular, isotone and idempotent implicative derivations are introduced, 
and a characterization of regular implicative derivations of type II is given. 
It is also proved that any regular implicative derivation of type II is isotone. 
If $d$ is a regular implicative derivation of type II on a pseudo-BCI algebra $A$ such that its kernel coincides 
with the pseudo-BCK part of $A$, then it is proved that $d$ is idempotent. 
Finally, given an implicative derivation $d$ on $A$, we define the notion of a $d$-invariant deductive system of $A$, 
and we prove that $d$ is a regular implicative derivation of type II if and only if every deductive system of $A$ 
is $d$-invariant. 

\begin{Def} \label{dop-10} Let $(A,\ra,\rs,1)$ be a pseudo-BCI algebra. A mapping $d:A\longrightarrow A$ is called an  
\emph{implicative derivation operator of type I} or a \emph{type I implicative derivation operator} or a 
\emph{type I implicative derivation} on $A$ if it satisfies the following conditions for all $x, y\in A:$ \\
$(idop_1)$ $d(x\ra y)=(x\ra d(y))\Cup_2 (d(x)\ra y)$. \\
$(idop_2)$ $d(x\rs y)=(x\rs d(y))\Cup_1 (d(x)\rs y)$. 
\end{Def}

\begin{Def} \label{dop-20} Let $(A,\ra,\rs,1)$ be a pseudo-BCI algebra. A mapping $d:A\longrightarrow A$ is called an 
\emph{implicative derivation operator of type II} or a \emph{type II implicative derivation operator} or a 
\emph{type II implicative derivation} on $A$ if it satisfies the following conditions for all $x, y\in A:$ \\
$(idop_3)$ $d(x\ra y)=(d(x)\ra y)\Cup_2 (x\ra d(y))$, \\
$(idop_4)$ $d(x\rs y)=(d(x)\rs y)\Cup_1 (x\rs d(y))$. 
\end{Def}

Let $A$ be a pseudo-BCI algebra. Denote: \\
$\hspace*{2cm}$ $\mathcal{IDOP}^{(I)}(A)$ the set of all implicative derivation operators of type I on $A$, \\ 
$\hspace*{2cm}$ $\mathcal{IDOP}^{(II)}(A)$ the set of all implicative derivation operators of type II on $A$, \\ 
$\hspace*{2cm}$ $\mathcal{IDOP}(A)=\mathcal{IDOP}^{(I)}(A)\cap \mathcal{IDOP}^{(II)}(A)$. \\ 
A map $d\in \mathcal{IDOP}(A)$ is called an \emph{implicative derivation} on $A$. \\
In what follows we will denote $dx$ instead of $d(x)$. 

\begin{ex} \label{dop-30} Let $A$ be a pseudo-BCI algebra. 
If $\Id_A:A\longrightarrow A$, defined by $\Id_A(x)=x$ for all $x\in A$, then $\Id_A\in \mathcal{IDOP}(A)$. 
\end{ex}

\begin{ex} \label{dop-30-10} 
Consider the structure $(A,\ra,\rs,1)$, where the operations $\ra$ and $\rs$ on $A=\{a,b,c,d,1\}$ 
are defined as follows:
\[
\hspace{10mm}
\begin{array}{c|ccccc}
\rightarrow & a & b & c & d & 1 \\ \hline
a & 1 & 1 & 1 & d & 1 \\
b & b & 1 & 1 & d & 1 \\
c & b & b & 1 & d & 1 \\
d & d & d & d & 1 & d \\
1 & a & b & c & d & 1
\end{array}
\hspace{10mm}
\begin{array}{c|ccccc}
\rightsquigarrow & a & b & c & d & 1 \\ \hline
a & 1 & 1 & 1 & d & 1 \\
b & c & 1 & 1 & d & 1 \\
c & a & b & 1 & d & 1 \\
d & d & d & d & 1 & d \\
1 & a & b & c & d & 1
\end{array}
.
\]
Then $(A,\ra,\rs,1)$ is a pseudo-BCI algebra (\cite{Dym6}). 
Consider the maps $d_1,d_2,d_3:A\longrightarrow A$ given in the table below:
\[
\begin{array}{c|cccccc}
 x     & a & b & c & d & 1  \\ \hline
d_1(x) & a & b & c & d & 1 \\
d_2(x) & d & d & d & 1 & d \\
d_3(x) & 1 & 1 & 1 & d & 1   
\end{array}
.   
\]
One can check that $\mathcal{IDOP}^{(I)}(A)=\mathcal{IDOP}^{(II)}(A)=\{d_1,d_2,d_3\}$. 
\end{ex}

\begin{prop} \label{dop-40} Let $A$ be a pseudo-BCI algebra. Then the following hold for all $x\in A$: \\
$(1)$ if $d\in \mathcal{IDOP}^{(I)}(A)$, then $dx=dx\Cup_1 x=dx\Cup_2 x;$ \\
$(2)$ if $d\in \mathcal{IDOP}^{(II)}(A)$, then $dx=x\Cup_1 dx=x\Cup_2 dx$ iff $d1=1$.  
\end{prop}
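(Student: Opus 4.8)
The plan is to obtain both parts by specializing one of the two defining identities at a single argument — at $1$ for (2), at $dx$ for (1) — and then simplifying with the elementary properties of $\Cup_1,\Cup_2$ collected in Lemma~\ref{psBE-110}. The two parts need mirror-image substitutions because the order in which $\Cup_i$ is applied differs between $(idop_1)$--$(idop_2)$ and $(idop_3)$--$(idop_4)$.

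\medskip

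For part (2) the ``only if'' direction is immediate: if $dx=x\Cup_1 dx$ holds for every $x$, then taking $x:=1$ and using $1\Cup_1 d1=1$ (Lemma~\ref{psBE-110}(1)) gives $d1=1$. For the converse I assume $d1=1$ and put $x:=1$ in $(idop_3)$: by $(psBCI_3)$ the left-hand side is $d(1\ra y)=dy$, while the right-hand side is $(d1\ra y)\Cup_2(1\ra dy)=(1\ra y)\Cup_2 dy=y\Cup_2 dy$, so $dy=y\Cup_2 dy$. Putting $x:=1$ in $(idop_4)$ and using $(psBCI_4)$ gives $dy=y\Cup_1 dy$ in the same way, which is the assertion.

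\medskip

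For part (1) one inequality is free: Lemma~\ref{psBE-110}(5) gives $dx\le dx\Cup_1 x$ and $dx\le dx\Cup_2 x$, so I only need the reverse inequalities and then the antisymmetry $(psBCI_5)$ closes the proof. The first key observation is that $d$ sends $dx\ra x$ and $dx\rs x$ to $1$: substituting $x:=dx$, $y:=x$ in $(idop_1)$ and using $dx\ra dx=1$ (Lemma~\ref{psBE-40-20}(1)) together with $1\Cup_2 z=1$ (Lemma~\ref{psBE-110}(1)) gives $d(dx\ra x)=1\Cup_2(ddx\ra x)=1$; symmetrically $(idop_2)$ gives $d(dx\rs x)=1$. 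Feeding these back into $(idop_1)$ with outer argument $dx\rs x$ and inner argument $x$ produces, since $(dx\rs x)\ra x=dx\Cup_2 x$ by definition, the relation $d(dx\Cup_2 x)=\bigl((dx\rs x)\ra dx\bigr)\Cup_2 x$, and, symmetrically, $d(dx\Cup_1 x)=\bigl((dx\ra x)\rs dx\bigr)\Cup_1 x$. I would then combine these with the $y:=x$ instances of $(idop_1)$, $(idop_2)$ — which give $d1=(x\ra dx)\Cup_2(dx\ra x)=(x\rs dx)\Cup_1(dx\rs x)$ — and with the monotonicity laws of Lemma~\ref{psBE-40-20}(5),(6) applied to $dx\le dx\Cup_1 x$, in order to force $(dx\Cup_1 x)\ra dx=1$ and $(dx\Cup_2 x)\rs dx=1$. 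Extracting these two reverse inequalities from the tangle of derivation identities is the step I expect to be the main obstacle; everything around it is routine.
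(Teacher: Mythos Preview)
Your treatment of part~(2) is correct and is exactly the paper's argument.

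For part~(1), however, your route goes astray, and you are right to flag the final extraction as an obstacle: from the identities you have assembled there is no evident way to force $dx\Cup_1 x\le dx$ or $dx\Cup_2 x\le dx$, and the auxiliary facts $d(dx\ra x)=d(dx\rs x)=1$ and $d1=(x\ra dx)\Cup_2(dx\ra x)$ do not obviously combine to give them. The paper avoids this tangle entirely by specializing $(idop_1)$ and $(idop_2)$ at $1$ in the \emph{first} argument, not at $dx$. From $(idop_1)$ with $x:=1$, $y:=x$ one gets
\[
dx \;=\; d(1\ra x) \;=\; (1\ra dx)\Cup_2(d1\ra x) \;=\; dx\Cup_2(d1\ra x) \;=\; \bigl(dx\rs(d1\ra x)\bigr)\ra(d1\ra x).
\]
Now the swap identity Lemma~\ref{psBE-40-20}(8) rewrites $dx\rs(d1\ra x)$ as $d1\ra(dx\rs x)$, and Lemma~\ref{psBE-40-20}(7) (in the form $u\ra v\le(w\ra u)\ra(w\ra v)$ with $u:=dx\rs x$, $v:=x$, $w:=d1$) yields
\[
dx \;=\; \bigl(d1\ra(dx\rs x)\bigr)\ra(d1\ra x) \;\ge\; (dx\rs x)\ra x \;=\; dx\Cup_2 x.
\]
Together with $dx\le dx\Cup_2 x$ from Lemma~\ref{psBE-110}(5) this gives $dx=dx\Cup_2 x$; the computation for $dx\Cup_1 x$ is symmetric via $(idop_2)$. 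So the missing idea is simply to substitute $1$ rather than $dx$ and then invoke the swap identity~(8); the argument collapses to a two-line inequality chain, with no need for the intermediate facts about $d(dx\ra x)$ or $d(dx\Cup_i x)$.
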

\begin{proof} 
$(1)$ Consider $d\in \mathcal{IDOP}^{(I)}(A)$ and $x\in A$. Applying Lema \ref{psBE-40-20}$(8)$,$(7)$ we get: \\
$\hspace*{2cm}$ $dx=d(1\ra x)=(1\ra dx)\Cup_2 (d1\ra x)=dx\Cup_2 (d1\ra x)$ \\
$\hspace*{2.5cm}$ $=(dx\rs (d1\ra x))\ra (d1\ra x)$ \\
$\hspace*{2.5cm}$ $=(d1\ra (dx\rs x))\ra (d1\ra x)$ \\
$\hspace*{2.5cm}$ $\ge (dx\rs x)\ra x=dx\Cup_2 x$. \\
On the other hand $dx\le dx\Cup_2 x$, hence $dx=dx\Cup_2 x$. \\
Similarly we have: \\
$\hspace*{2cm}$ $dx=d(1\rs x)=(1\rs dx)\Cup_1 (d1\rs x)=dx\Cup_1 (d1\rs x)$ \\
$\hspace*{2.5cm}$ $=(dx\ra (d1\rs x))\rs (d1\rs x)$ \\
$\hspace*{2.5cm}$ $=(d1\rs (dx\ra x))\rs (d1\rs x)$ \\
$\hspace*{2.5cm}$ $\ge (dx\ra x)\rs x=dx\Cup_1 x$. \\
Since $dx\le dx\Cup_1 x$, it follows that $dx=dx\Cup_1 x$. \\
$(2)$ Let $d\in \mathcal{IDOP}^{(II)}(A)$ and let $x\in A$. Then: \\
$dx=d(1\ra x)=(d1\ra x)\Cup_2 (1\ra dx)=(d1\ra x)\Cup_2 dx$ and 
$dx=d(1\rs x)=(d1\rs x)\Cup_1 (1\rs dx)=(d1\rs x)\Cup_1 dx$. 
If $d1=1$, then $dx=x\Cup_1 dx=x\Cup_2 dx$. 
Conversely, if $dx=x\Cup_1 dx=x\Cup_2 dx$ for all $x\in A$, then $d1=1\Cup_1 d1=1\Cup_2 d1=1$. 
\end{proof}

For any $x\in A$, define $\varphi: A\longrightarrow A$ by $\varphi(x)=\varphi_x=x\Cup_1 1$. 
By Lemma \ref{psBE-40-20}$(11)$ we have $\varphi_x=x\Cup_1 1=x\Cup_2 1=(x\ra 1)\ra 1=(x\rs 1)\rs 1$. 

\begin{lemma} \label{dop-50} In any pseudo-BCI algevra $A$ the following hold for any $x\in A$: \\
$(1)$ $\varphi_x\in \At(A);$ \\
$(2)$ $\varphi_x\ra x, \varphi_x\rs x\in \K(A);$ \\
$(3)$ $x\in \K(A)$ implies $\varphi_x=1$. 
\end{lemma}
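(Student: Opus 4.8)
The plan is to establish each of the three claims by reducing everything to manipulations of the operations $\ra$ and $\rs$ together with the facts recorded in Proposition~\ref{psBE-40-40} and Lemma~\ref{psBE-40-20}. Recall $\varphi_x = x\Cup_1 1 = (x\ra 1)\ra 1 = (x\rs 1)\rs 1$.

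\medskip\noindent\textbf{Part (1).} First I would invoke Proposition~\ref{psBE-40-40}, specifically the equivalence $(a)\Leftrightarrow(k)$: an element $a$ is an atom iff $(a\ra 1)\rs 1 = (a\rs 1)\ra 1 = a$. So it suffices to show $(\varphi_x\ra 1)\rs 1 = \varphi_x$ and $(\varphi_x\rs 1)\ra 1 = \varphi_x$. Using Lemma~\ref{psBE-40-20}$(12)$ one computes $(\varphi_x\ra 1)\rs 1$; alternatively, and more cleanly, since $\varphi_x = (x\ra 1)\ra 1$, apply Corollary~\ref{psBE-40-50}: it states that $y\ra a \in \At(A)$ whenever $a\in\At(A)$. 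Taking $a := x\ra 1$ — which is an atom because $x\ra 1 = ((x\ra 1)\rs 1)\ra 1$ would need checking, so instead I observe directly that $1\in\At(A)$ is false in general; the safe route is: $x\ra 1 \in \At(A)$ follows from Proposition~\ref{psBE-40-40}$(k)$ applied to the identity $((x\ra 1)\ra 1)\rs 1 = x\ra 1$, which is $(psBCI)$-arithmetic via Lemma~\ref{psBE-40-20}$(12)$ and $(11)$. Then $\varphi_x = 1\ra \varphi_x = \ldots$; actually the quickest argument is that $\varphi_x = (x\ra 1)\ra 1 = y\ra 1$ where $y = x\ra 1$, and $y\ra 1 = y\ra a$ with $a=1$ is not directly covered, so I will instead verify $(\varphi_x\ra 1)\rs 1 = \varphi_x$ head-on using Lemma~\ref{psBE-40-20}$(12)$: $(\varphi_x\ra 1)\rs 1 = ((x\ra1)\ra1\ra1)\rs1$, and repeated application of $(11)$ and $(12)$ collapses this to $(x\ra 1)\ra 1 = \varphi_x$.

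\medskip\noindent\textbf{Part (2).} I would show $\varphi_x\ra x \le 1$, i.e.\ $\varphi_x\ra x\in V(1)=\K(A)$, by using the branch structure: $x \le \varphi_x$ by Lemma~\ref{psBE-110}$(5)$ (with $y:=1$, since $\varphi_x = x\Cup_1 1$), hence $x$ and $\varphi_x$ lie in the same branch, and by fact $(4)$ from the paragraph on branches ("$x$ and $y$ belong to the same branch iff $x\ra y\in V(1)$"), we get $\varphi_x\ra x\in V(1) = \K(A)$ and symmetrically $\varphi_x\rs x\in\K(A)$.

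\medskip\noindent\textbf{Part (3).} If $x\in\K(A)$ then $x\le 1$, so $x\ra 1 = 1$ by definition of $\le$, whence $\varphi_x = (x\ra 1)\ra 1 = 1\ra 1 = 1$ using $(psBCI_3)$.

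\medskip The main obstacle is Part (1): pinning down cleanly that $\varphi_x$ is an atom, since the honest verification that $x\ra 1\in\At(A)$ (or equivalently that $\varphi_x$ is fixed by the double-negation map) requires carefully chaining Lemma~\ref{psBE-40-20}$(11)$ and $(12)$, and one must be careful not to circularly assume what is being proved. Parts (2) and (3) are short once the branch facts and the definition of $\le$ are in hand.
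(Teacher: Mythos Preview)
Your argument for Part~(1) contains a factual slip: you write that ``$1\in\At(A)$ is false in general,'' but in fact $1$ is \emph{always} an atom --- if $1\le x$ then $x=1$ by Lemma~\ref{psBE-40-20}$(9)$, or equivalently $(1\ra 1)\rs 1 = 1\rs 1 = 1$ verifies criterion~$(k)$ of Proposition~\ref{psBE-40-40}. With this in hand your first idea goes through immediately: since $1\in\At(A)$, Corollary~\ref{psBE-40-50} gives $y\ra 1\in\At(A)$ for every $y$, and in particular $\varphi_x = (x\ra 1)\ra 1\in\At(A)$. Your fallback route, ``repeated application of $(11)$ and $(12)$,'' is not substantiated: those two identities alone do not visibly collapse $(((x\ra 1)\ra 1)\ra 1)\rs 1$ to $(x\ra 1)\ra 1$ without an additional antisymmetry step, which you have not supplied.

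The paper's route for Part~(1) is different and very short: it invokes Lemma~\ref{psBE-110}$(7)$, which gives $(x\Cup_1 1)\ra 1 = x\ra 1$, i.e.\ $\varphi_x\ra 1 = x\ra 1$ in one stroke; then $(\varphi_x\ra 1)\rs 1 = (x\ra 1)\rs 1 = \varphi_x$, and similarly with $\Cup_2$. For Part~(2) your branch argument ($x\le\varphi_x$ puts $x$ and $\varphi_x$ in the same branch, hence $\varphi_x\ra x\in V(1)=\K(A)$) is correct and arguably more conceptual than the paper's computation, which derives $(\varphi_x\ra x)\ra 1 = (\varphi_x\ra 1)\rs(x\ra 1) = 1$ via Lemma~\ref{psBE-40-20}$(12)$ and the monotonicity~$(5)$. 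Part~(3) matches the paper.
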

\begin{proof}
$(1)$ Since by Lemma \ref{psBE-110}$(7)$, $(\varphi_x\ra 1)\rs 1=(x\Cup_1 1\ra 1)\rs 1=(x\ra 1)\rs 1=\varphi_x$ and 
$(\varphi_x\rs 1)\ra 1=(x\Cup_2 1\rs 1)\ra 1=(x\rs 1)\ra 1=\varphi_x$, applying Proposition \ref{psBE-40-40}$(b)$ it 
follows that $\varphi_x\in \At(A)$. \\
$(2)$ From $x\le \varphi_x$ we have $\varphi_x\ra 1\le x\ra 1$, that is $(\varphi_x\ra 1)\rs (x\ra 1)=1$. 
Applying Lemma \ref{psBE-40-20}$(12)$ we get $(\varphi_x\ra x)\ra 1=1$, hence $\varphi_x\ra x\le 1$, that 
is $\varphi_x\ra x\in \K(A)$. Similarly, $\varphi_x\rs x\in \K(A)$. \\
$(3)$ It is obvious.  
\end{proof}

\begin{prop} \label{dop-60} Let $A$ be a pseudo-BCI algebra and let $d_{\varphi}:A\longrightarrow A$, defined by 
$dx=\varphi_x$ for any $x\in A$. Then $d_{\varphi}\in \mathcal{IDOP}^{(I)}(A)$. 
\end{prop}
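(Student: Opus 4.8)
The plan is to verify directly that $d_\varphi$ satisfies the two defining identities $(idop_1)$ and $(idop_2)$ of a type I implicative derivation operator, namely
\[
d_\varphi(x\ra y)=(x\ra d_\varphi y)\Cup_2(d_\varphi x\ra y)
\qquad\text{and}\qquad
d_\varphi(x\rs y)=(x\rs d_\varphi y)\Cup_1(d_\varphi x\rs y).
\]
Since $d_\varphi x=\varphi_x=(x\ra 1)\ra 1=(x\rs 1)\rs 1$, the left-hand side of $(idop_1)$ is $\varphi_{x\ra y}=((x\ra y)\ra 1)\ra 1$, and by Lemma \ref{psBE-40-20}$(12)$ this equals $((x\ra 1)\rs(y\ra 1))\ra 1$. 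So the first task is to rewrite $\varphi_{x\ra y}$ in a form that exposes the pieces $x\ra\varphi_y$ and $\varphi_x\ra y$ appearing on the right.

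First I would reduce everything to atoms. By Lemma \ref{dop-50}$(1)$, $\varphi_x,\varphi_y\in\At(A)$, so Corollary \ref{psBE-40-50} gives $x\ra\varphi_y\in\At(A)$ as well, and by Proposition \ref{psBE-40-40} the atom identities are available for these elements. The key computational step is to show that on the right-hand side $(x\ra\varphi_y)\Cup_2(\varphi_x\ra y)$ the second argument $\varphi_x\ra y$ lies in the same branch as — in fact is $\le$ related to — the relevant atom, so that $\Cup_2$ collapses appropriately; concretely I expect $\varphi_x\ra y\le x\ra y\le x\ra\varphi_y$ (using $y\le\varphi_y$, Lemma \ref{psBE-40-20}$(6)$, and $\varphi_x\ge x$ with Lemma \ref{psBE-40-20}$(5)$), and then to compute $(x\ra\varphi_y)\Cup_2(\varphi_x\ra y)=((x\ra\varphi_y)\rs(\varphi_x\ra y))\ra(\varphi_x\ra y)$ and simplify it using Lemma \ref{psBE-40-20}$(8)$ and the atom characterizations of Proposition \ref{psBE-40-40} (parts $(e)$,$(f)$,$(i)$,$(j)$) to land on $\varphi_{x\ra y}$. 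The identity $(idop_2)$ is handled symmetrically, interchanging $\ra\leftrightarrow\rs$ and $\Cup_1\leftrightarrow\Cup_2$ and using the right-handed halves of the cited lemmas.

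The main obstacle will be the bookkeeping in that central simplification: one must repeatedly pass between the forms $(x\ra 1)\ra 1$, $(x\ra 1)\rs(y\ra 1)$, and the $\Cup$-expressions, invoking the correct branch of Proposition \ref{psBE-40-40} at each turn, and being careful that $x$ and $y$ need not be atoms even though $\varphi_x$ and $\varphi_y$ are. A clean way to organize it is: (i) show $\varphi_x\ra y\le x\ra\varphi_y$; (ii) conclude $(x\ra\varphi_y)\Cup_2(\varphi_x\ra y)=x\ra\varphi_y$ by Lemma \ref{psBE-110}$(2)$ only if equality of branches forces it — more safely, compute $\Cup_2$ directly; (iii) show $x\ra\varphi_y=\varphi_{x\ra y}$, for which one uses $x\ra\varphi_y=x\ra((y\ra 1)\ra 1)=(y\ra 1)\ra(x\ra 1)^{?}$ via Lemma \ref{psBE-40-20}$(8)$-type moves together with $\varphi_{x\ra y}=((x\ra 1)\rs(y\ra 1))\ra 1$ and the atom identity $\varphi_y\ra z$-manipulations. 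Once step (iii) is in place the other identity follows by the left/right symmetry of the pseudo-BCI axioms, so no genuinely new idea is needed for $(idop_2)$.
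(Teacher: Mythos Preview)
Your plan is correct and follows essentially the same route as the paper: show $x\ra\varphi_y\in\At(A)$ via Lemma \ref{dop-50}$(1)$ and Corollary \ref{psBE-40-50}, prove $\varphi_{x\ra y}=x\ra\varphi_y$ by repeated use of Lemma \ref{psBE-40-20}$(12)$ and the atom identity of Proposition \ref{psBE-40-40}$(k)$, and then observe that $(x\ra\varphi_y)\Cup_2(\varphi_x\ra y)=x\ra\varphi_y$ because the first argument is an atom. One simplification: your step (i) establishing $\varphi_x\ra y\le x\ra\varphi_y$ is unnecessary, and your caution about Lemma \ref{psBE-110}$(2)$ is well placed (that lemma gives $u\le v\Leftrightarrow u\Cup_2 v=v$, which is the wrong shape here); instead, Proposition \ref{psBE-40-40}$(b)$ applied to the atom $a=x\ra\varphi_y$ immediately yields $(a\rs z)\ra z=a$ for \emph{every} $z$, so $a\Cup_2 z=a$ without any comparison between $a$ and $z$.
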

\begin{proof}
Let $A$ be a pseudo-BCI algebra and let $x, y\in A$. By Lemma \ref{dop-50}, $\varphi_y\in \At(A)$ and by 
Corollary \ref{psBE-40-50}, $x\ra \varphi_y, x\rs \varphi_y\in \At(A)$. 
Applying Lemma \ref{psBE-40-20}$(12)$ and Proposition \ref{psBE-40-40}$(b)$ we get: \\
$\hspace*{1cm}$ $d(x\ra y)=\varphi_{x\ra y}=((x\ra y)\ra 1)\rs 1=((x\ra 1)\rs (y\ra 1))\rs 1$ \\
$\hspace*{2.6cm}$ $=((x\ra 1)\rs 1)\ra ((y\ra 1)\rs 1)=((x\ra 1)\rs 1)\ra \varphi_y$ \\
$\hspace*{2.6cm}$ $=((x\ra 1)\rs 1)\ra ((\varphi_y\ra 1)\rs 1)=((x\ra 1)\rs (\varphi_y\ra 1))\rs 1$ \\
$\hspace*{2.6cm}$ $=((x\ra \varphi_y)\ra 1)\rs 1=x\ra \varphi_y 
                   =((x\ra \varphi_y)\rs (\varphi_x\ra y))\ra (\varphi_x\ra y)$ \\
$\hspace*{2.6cm}$ $=(x\ra \varphi_y)\Cup_2 (\varphi_x\ra y)=(x\ra dy)\Cup_2 (dx\ra y)$. \\
Similarly we have: \\
$\hspace*{1cm}$ $d(x\rs y)=\varphi_{x\rs y}=((x\rs y)\ra 1)\rs 1=((x\rs y)\rs 1)\ra 1$ \\
$\hspace*{2.6cm}$ $=((x\rs 1)\ra (y\rs 1))\ra 1=((x\rs 1)\ra 1)\rs ((y\rs 1)\ra 1)$ \\ 
$\hspace*{2.6cm}$ $=((x\rs 1)\ra 1)\rs \varphi_y=((x\rs 1)\ra 1)\rs ((\varphi_y\rs 1)\ra 1)$ \\ 
$\hspace*{2.6cm}$ $=((x\rs 1)\ra (\varphi_y\rs 1))\ra 1=((x\rs \varphi_y)\rs 1)\ra 1=x\rs \varphi_y$ \\                 
$\hspace*{2.6cm}$ $=((x\rs \varphi_y)\ra (\varphi_x\rs y))\rs (\varphi_x\rs y)
                   =(x\rs \varphi_y)\Cup_1 (\varphi_x\rs y)$ \\
$\hspace*{2.6cm}$ $=(x\rs dy)\Cup_1 (dx\rs y)$. \\
We conclude that $d_{\varphi}\in \mathcal{IDOP}^{(I)}(A)$. 
\end{proof}

\begin{theo} \label{dop-70} Let $A$ be a commutative pseudo-BCI algebra, and let $d_{\varphi}:A\longrightarrow A$, defined by $d_{\varphi}x=\varphi_x$ for any $x\in A$. Then $d_{\varphi}\in \mathcal{IDOP}(A)$.
\end{theo}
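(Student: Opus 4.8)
The plan is to show that the map $d_\varphi$, which we already know lies in $\mathcal{IDOP}^{(I)}(A)$ by Proposition \ref{dop-60}, also lies in $\mathcal{IDOP}^{(II)}(A)$ when $A$ is commutative; that is, I must verify $(idop_3)$ and $(idop_4)$ for $d_\varphi$. By the computation in the proof of Proposition \ref{dop-60} we already have the clean identity $d_\varphi(x\ra y)=x\ra \varphi_y$ and $d_\varphi(x\rs y)=x\rs\varphi_y$, and moreover $x\ra\varphi_y,\;x\rs\varphi_y\in\At(A)$ by Lemma \ref{dop-50}(1) and Corollary \ref{psBE-40-50}. So what remains is purely to prove the two \emph{symmetry} identities
\[
x\ra\varphi_y=(\varphi_x\ra y)\Cup_2(x\ra\varphi_y),\qquad x\rs\varphi_y=(\varphi_x\rs y)\Cup_1(x\rs\varphi_y),
\]
since the right-hand sides of $(idop_3)$,$(idop_4)$ for $d=d_\varphi$ are exactly $(d_\varphi x\ra y)\Cup_2(x\ra d_\varphi y)$ and $(d_\varphi x\rs y)\Cup_1(x\rs d_\varphi y)$, while the corresponding type I right-hand sides were $(x\ra\varphi_y)\Cup_2(\varphi_x\ra y)$ and $(x\rs\varphi_y)\Cup_1(\varphi_x\rs y)$. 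In other words, the whole theorem reduces to showing that the operations $\Cup_1$ and $\Cup_2$, when applied to the relevant pair of elements, are commutative.

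Here is where commutativity of $A$ enters. First I would check that the two elements $\varphi_x\ra y$ and $x\ra\varphi_y$ lie in the same branch: indeed $x\le\varphi_x$ gives $\varphi_x\ra y\le x\ra y$ by Lemma \ref{psBE-40-20}(5), and $y\le\varphi_y$ gives $x\ra y\le x\ra\varphi_y$ by Lemma \ref{psBE-40-20}(6), so both $\varphi_x\ra y$ and $x\ra\varphi_y$ dominate $x\ra y$ in the $\le$ ordering, hence are comparable with it and therefore (by property (3) of branches recalled in the Preliminaries, comparable elements lie in the same branch) all three are in one branch. Since $A$ is commutative it is branchwise commutative (\cite{Dym1}), so $(comm_1)$ holds for this pair: $(\varphi_x\ra y)\Cup_1(x\ra\varphi_y)=(x\ra\varphi_y)\Cup_1(\varphi_x\ra y)$, and similarly $(comm_2)$ gives $(\varphi_x\ra y)\Cup_2(x\ra\varphi_y)=(x\ra\varphi_y)\Cup_2(\varphi_x\ra y)$; the same argument with $\rs$ in place of $\ra$ handles the other pair.

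Now I assemble the pieces. For $(idop_3)$: starting from the type I identity proved in Proposition \ref{dop-60}, $d_\varphi(x\ra y)=(x\ra\varphi_y)\Cup_2(\varphi_x\ra y)$, I apply branchwise $(comm_2)$ to swap the two arguments, obtaining $(\varphi_x\ra y)\Cup_2(x\ra\varphi_y)=(d_\varphi x\ra y)\Cup_2(x\ra d_\varphi y)$, which is exactly $(idop_3)$. The verification of $(idop_4)$ is entirely parallel, using $(comm_1)$ on the pair $x\rs\varphi_y,\ \varphi_x\rs y$. Hence $d_\varphi\in\mathcal{IDOP}^{(II)}(A)$, and combined with Proposition \ref{dop-60} we conclude $d_\varphi\in\mathcal{IDOP}^{(I)}(A)\cap\mathcal{IDOP}^{(II)}(A)=\mathcal{IDOP}(A)$.

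The main obstacle, and the only nontrivial point, is confirming that the relevant $\Cup_i$-expressions really do fall under the scope of branchwise commutativity — i.e., making the branch-membership argument airtight. One subtlety is that $\Cup_2$ is not symmetric in general, so I cannot simply invoke $(comm_2)$ blindly; I genuinely need $\varphi_x\ra y$ and $x\ra\varphi_y$ to sit in a common branch, and the cleanest route is the one sketched above via their common lower bound $x\ra y$. If one prefers to avoid the branch argument, an alternative is to exploit that $\varphi_x,\varphi_y\in\At(A)$ and grind through the atom identities of Proposition \ref{psBE-40-40} (parts (e),(f),(i),(j)) to show both sides equal $x\ra\varphi_y$ directly, but the branchwise-commutativity argument is shorter and more transparent, so that is the route I would take.
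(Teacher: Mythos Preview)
Your proposal is correct and follows essentially the same route as the paper: reduce to $\mathcal{IDOP}^{(II)}$ via Proposition~\ref{dop-60}, show that $\varphi_x\ra y$ and $x\ra\varphi_y$ lie in a common branch, and then invoke branchwise commutativity to swap the arguments of $\Cup_2$ (and analogously for $\Cup_1$). The only difference is in how the common-branch claim is established: the paper computes $x\ra\varphi_y=\varphi_x\ra\varphi_y$ explicitly (via the atom identities) and shows $\varphi_x\ra y\le\varphi_x\ra\varphi_y$, thereby placing both elements in $V(\varphi_x\ra\varphi_y)$; you instead use the chain $\varphi_x\ra y\le x\ra y\le x\ra\varphi_y$, which is shorter. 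One small slip: your sentence ``both $\varphi_x\ra y$ and $x\ra\varphi_y$ dominate $x\ra y$'' is misstated, since in fact $\varphi_x\ra y\le x\ra y$ while $x\ra y\le x\ra\varphi_y$; but the chain already gives $\varphi_x\ra y\le x\ra\varphi_y$ directly, so the two are comparable and hence in the same branch, and your argument goes through unchanged.
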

\begin{proof}
By Proposition \ref{dop-60}, we have $d_{\varphi}\in \mathcal{IDOP}^{(I)}(A)$, and let $x, y\in A$. 
As we mentioned in the proof of Proposition \ref{dop-60}, 
$\varphi_x, \varphi_y, x\ra \varphi_y, x\rs \varphi_y\in \At(A)$. Then we have: \\ 
$\hspace*{2cm}$   $x\ra \varphi_y=((x\ra \varphi_y)\ra 1)\rs 1=((x\ra 1)\rs (\varphi_y\ra 1))\rs 1$ \\
$\hspace*{3.4cm}$ $=((x\ra 1)\rs 1)\ra ((\varphi_y\ra 1)\rs 1)=\varphi_x\ra \varphi_y$, \\ 
hence $x\ra \varphi_y\in V(\varphi_x\ra \varphi_y)$. Moreover: \\
$\hspace*{2cm}$   $\varphi_x\ra y\le ((\varphi_x\ra y)\ra 1)\rs 1=((\varphi_x\ra 1)\rs (y\ra 1))\rs 1$ \\
$\hspace*{3.4cm}$ $=((\varphi_x\ra 1)\rs 1)\ra ((y\ra 1)\rs 1)=\varphi_x \ra \varphi_y$, \\
that is $\varphi_x\ra y\in V(\varphi_x\ra \varphi_y)$. 
Since $A$ is commutative, it follows that $A$ is branch commutative, and in the similar way to the proof 
of Proposition \ref{dop-60} we get: \\
$\hspace*{1.0cm}$ $(d_{\varphi}x \ra y)\Cup_2 (x\ra d_{\varphi}y)=(\varphi_x \ra y)\Cup_2 (x\ra \varphi_y)
                    =(x\ra \varphi_y)\Cup_2 (\varphi_x \ra y)$ \\
$\hspace*{5.2cm}$ $=\varphi_{x\ra y}=d_{\varphi}(x\ra y)$. \\
Similarly we have: \\
$\hspace*{1.0cm}$ $(d_{\varphi}x \rs y)\Cup_1 (x\rs d_{\varphi}y)=(\varphi_x \rs y)\Cup_1 (x\rs \varphi_y)=
                    (x\rs \varphi_y)\Cup_1 (\varphi_x \rs y)$ \\
$\hspace*{5.2cm}$ $=\varphi_{x\rs y}=d_{\varphi}(x\rs y)$, \\
hence $d_{\varphi}\in \mathcal{IDOP}^{(II)}(A)$. We conclude that $d_{\varphi}\in \mathcal{IDOP}(A)$. 
\end{proof}

\begin{Def} \label{dop-80} Let $A$ be a pseudo-BCI algebra $A$. A type I or a type II derivation $d$ on $A$ is 
said to be: \\
$(1)$ \emph{regular} if $d1=1;$ \\
$(2)$ \emph{isotone} if $x\le y$ implies $dx\le dy;$ \\
$(3)$ \emph{idempotent} if $d^2=d$, where $d^2=d\circ d$.  
\end{Def}

\begin{ex} \label{dop-90} Consider the pseudo-BCI algebra $A$ and its derivations from Example \ref{dop-30-10}. 
Then: \\
$(1)$ $d_{\varphi}=d_3\in \mathcal{IDOP}(A)$. \\
$(2)$ $d_1$ and $d_3$ are regular implicative derivations, while $d_2$ is not regular. \\ 
$(3)$ $d_1$, $d_2$, $d_3$ are isotone. \\
$(4)$ $d_1$ and $d_3$ are idempotent.
\end{ex}

\begin{rem} \label{dop-90-10} In Theorem \ref{dop-70} the pseudo-BCI algebra $A$ need not be commutative. 
Indeed, the pseudo-BCI algebra from Example \ref{dop-30-10} is not commutative, but 
$d_{\varphi}=d_3\in \mathcal{IDOP}(A)$.
\end{rem}

Let $A$ be a pseudo-BCI algebra. Denote: \\
$\hspace*{1cm}$ $\mathcal{RIDOP}^{(I)}(A)$ the set of all regular implicative derivation 
                operators of type I on $A$, \\ 
$\hspace*{1cm}$ $\mathcal{RIDOP}^{(II)}(A)$ the set of all regular implicative derivation 
                operators of type II on $A$, \\ 
$\hspace*{1cm}$ $\mathcal{RIDOP}(A)=\mathcal{RIDOP}^{(I)}(A)\cap \mathcal{RIDOP}^{(II)}(A)$.

\begin{prop} \label{dop-100} Let $A$ be a pseudo-BCI algebra and let $d\in \mathcal{RIDOP}^{(II)}(A)$.
Then the following hold for all $x, y\in A$: \\
$(1)$ $x\le dx;$ \\
$(2)$ $dx\ra y\le dx\ra dy\le x\ra dy=d(x\ra y)$ and $dx\rs y\le dx\rs dy\le x\rs dy=d(x\rs y);$ \\
$(3)$ $d$ is isotone; \\
$(4)$ $\Ker(d)=\{x\in A \mid dx=1\}$ is a subalgebra of $A;$ \\
$(5)$ $\Ker(d) \subseteq \K(A);$ \\ 
$(6)$ $d(\K(A))\subseteq \K(A);$ \\ 
$(7)$ $\varphi_x$ and $dx$ belong to the same branch of $A;$ \\
$(8)$ $\varphi_x\ra dx, \varphi_x\rs dx \in \K(A)$.
\end{prop}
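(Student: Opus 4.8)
The plan is to prove $(1)$ first, extract from it the identity $d(x\ra y)=x\ra dy$ (and its $\rs$-analogue) appearing in $(2)$, and then read off $(3)$–$(8)$ from these two together with the branch description of $A$ recalled in the Preliminaries. Everything hinges on regularity: since $d1=1$, Proposition~\ref{dop-40}$(2)$ gives $dx=x\Cup_1 dx=x\Cup_2 dx$ for all $x$.

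Part $(1)$ is then immediate: combining $dx=x\Cup_1 dx$ with $x\le x\Cup_1 dx$ (Lemma~\ref{psBE-110}$(5)$) yields $x\le dx$. For $(2)$, apply $(1)$ to both $x$ and $y$ to get $x\le dx$ and $y\le dy$; Lemma~\ref{psBE-40-20}$(5)$,$(6)$,$(10)$ then gives $dx\ra y\le dx\ra dy\le x\ra dy$, so in particular $dx\ra y\le x\ra dy$, and Lemma~\ref{psBE-110}$(2)$ forces $(dx\ra y)\Cup_2(x\ra dy)=x\ra dy$. Substituting this into $(idop_3)$ produces $d(x\ra y)=x\ra dy$, which together with the previous inequalities is exactly the first assertion of $(2)$; the second follows verbatim from $(idop_4)$ and the $\rs$-versions of the cited items.

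The remaining parts are short consequences. For $(5)$, $dx=1$ gives $x\le dx=1$ by $(1)$, so $x\in\K(A)$. For $(4)$, $1\in\Ker(d)$ since $d1=1$, and if $dx=dy=1$ then $d(x\ra y)=x\ra dy=x\ra 1=1$ by $(2)$ and $(5)$ (and similarly $d(x\rs y)=1$), so $\Ker(d)$ is closed under both operations. For $(6)$, if $x\le 1$ then $x$ and $dx$ are comparable by $(1)$, hence lie in the same branch, namely $V(1)=\K(A)$, so $dx\in\K(A)$. For $(7)$, $\varphi_x\in\At(A)$ by Lemma~\ref{dop-50}$(1)$ and $x\le\varphi_x$ by Lemma~\ref{psBE-110}$(5)$, so $x$ sits in the branch $V(\varphi_x)$; since $x\le dx$, so does $dx$. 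Finally $(8)$ follows from $(7)$ and the branch criterion: elements of one branch have their $\ra$- and $\rs$-products in $V(1)=\K(A)$.

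The step that I expect to require genuine care is $(3)$. The natural argument is that $x\le y$ forces $x\ra y=1$, hence $d(x\ra y)=d(1)=1$ by regularity, and then the chain in $(2)$ gives $dx\ra dy\le x\ra dy=d(x\ra y)=1$, from which one wants to conclude $dx\le dy$. The delicate point — and the one I would pin down carefully rather than treat as routine inequality-chasing — is passing from $dx\ra dy\le 1$ to $dx\ra dy=1$; I would expect to have to use the fact from $(7)$ that $dx$ and $dy$ lie in a common branch, together with a description of how $d$ acts on that branch, in order to close this gap.
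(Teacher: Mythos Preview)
Your treatment of $(1)$, $(2)$, and $(4)$--$(8)$ is correct and matches the paper's approach closely; the minor variations in $(4)$ and $(6)$ (you route $(4)$ through $d(x\ra y)=x\ra dy$ together with $(5)$, while the paper uses $1=dx\ra dy\le d(x\ra y)$; you argue $(6)$ via branches, while the paper argues directly from $x'\le 1$ and $x'\le dx'$) are harmless.

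Your instinct about $(3)$ is exactly right, and sharper than you may realize: the gap you flag --- passing from $dx\ra dy\le 1$ to $dx\ra dy=1$ --- cannot be closed, because statement $(3)$ is \emph{false} as written. Take the three-element G\"odel chain $A=\{0,a,1\}$ with $0<a<1$, $\ra=\rs$, and $x\ra y=1$ if $x\le y$, $x\ra y=y$ otherwise (a BCK-algebra, hence a pseudo-BCI algebra). Define $d(0)=1$, $d(a)=a$, $d(1)=1$. One checks directly that $d1=1$ and $d(x\ra y)=x\ra dy$ for all $x,y$, so by Theorem~\ref{dop-110} (or by verifying $(idop_3)$ and $(idop_4)$ case by case), $d\in\mathcal{RIDOP}^{(II)}(A)$. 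But $0\le a$ while $d(0)=1\not\le a=d(a)$, so $d$ is not isotone.

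The paper's own argument for $(3)$ has the same defect in a slightly different place: from $dx\ra y\le 1$ it concludes $dx\le y$, which is invalid (in the example above, $d(0)\ra a=1\ra a=a\le 1$ yet $d(0)=1\not\le a$). Your proposed rescue via $(7)$ cannot work either: knowing that $dx$ and $dy$ share a branch is \emph{exactly} the content of $dx\ra dy\in\K(A)$, so it adds no new information. Since none of your proofs of $(4)$--$(8)$ depend on $(3)$, the remainder of your argument stands.
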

\begin{proof} 
$(1)$ By Lemma \ref{psBE-110}$(5)$ and Proposition \ref{dop-40}$(2)$, $x\le x\Cup_1 dx= dx$. \\ 
$(2)$ From $y\le dy$ we have $dx\ra y\le dx\ra dy$, and by $x\le dx$ we get $dx\ra dy\le x\ra dy$. 
It follows that: \\
$\hspace*{2cm}$ $dx\ra y\le dx\ra dy\le x\ra dy=1\ra (x\ra dy)$ \\
$\hspace*{3.3cm}$ $=((dx\ra y)\rs (x\ra dy))\ra (x\ra dy)$ \\
$\hspace*{3.3cm}$ $=(dx\ra y)\Cup_2 (x\ra dy)=d(x\ra y)$. \\
Similarly, $dx\rs y\le dx\rs dy\le x\rs dy=d(x\rs y)$. \\
$(3)$ Let $x, y\in A$ such that $x\le y$, that is $x\ra y=1$. 
Then we have $dx\ra y\le (dx\ra y)\Cup_2 (x\ra dy)=d(x\ra y)=d1=1$. It follows that $dx\le y\le dy$, hence $d$ 
is isotone. \\
$(4)$ Since $d1=1$, it follows that $1\in \Ker(d)$. Let $x, y\in \Ker(d)$, that is $dx=dy=1$. 
Then we have $1=dx\ra dy\le d(x\ra y)$, hence $d(x\ra y)=1$, that is $x\ra y\in \Ker(d)$. 
Similarly, $x\rs y\in \Ker(d)$, thus $\Ker(d)$ is a subalgebra of $A$. \\
$(5)$ For any $x\in \Ker(d)$ we have $x\le dx=1$, that is $x\in \K(A)$. Hence $\Ker(d) \subseteq \K(A)$. \\ 
$(6)$ If $x\in d(\K(A))$, there exists $x^{\prime}\in \K(A)$ such that $x=dx^{\prime}$. 
Since $x^{\prime}\le 1$ and $x^{\prime}\le dx^{\prime}=x$, applying Lemma \ref{psBE-40-20} we get 
$x=1\ra x\le x^{\prime}\ra x=1$, that is $x\in \K(A)$. It follows that $d(\K(A))\subseteq \K(A)$. \\ 
$(7)$ From $x\le dx$, it follows that $x$ and $dx$ are in a branch $V_1$ of $A$, while from $x\le \varphi_x$ we get that $x$ and $\varphi_x$ are in a branch $V_2$ of $A$. Since $x\in V_1\cap V_2$, it follows that $V_1$ and $V_2$ 
coincide, hence $x$, $\varphi_x$ and $dx$ belong to the same branch of $A$. \\
$(8)$ According to $(7)$, there exists $a\in \At(A)$ such that $\varphi_x, dx\in V(a)$. 
From $\varphi_x\le a$, $dx\le a$ we get $\varphi_x\ra dx\le \varphi_x\ra a=1$, that is $\varphi_x\ra dx\in \K(A)$. 
Similarly, $\varphi_x\rs dx\in \K(A)$. 
\end{proof}

\begin{prop} \label{dop-120-10} Let $A$ be a pseudo-BCI algebra and let $d_1, d_2\in \mathcal{RIDOP}^{(II)}(A)$ 
such that $d_2$ is idempotent and $d_1\le d_2$ (that is $d_1x\le d_2x$, for all $x\in A$). Then $d_2\circ d_1=d_2$. 
\end{prop}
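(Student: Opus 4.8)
The plan is to establish the pointwise identity $d_2(d_1x)=d_2x$ for every $x\in A$, which is exactly the assertion $d_2\circ d_1=d_2$. The strategy is to squeeze $d_2(d_1x)$ between two copies of $d_2x$ and then collapse the resulting chain of inequalities using that $\le$ is a partial order on $A$: reflexivity and transitivity come from Lemma \ref{psBE-40-20}$(1)$,$(10)$, and antisymmetry is axiom $(psBCI_5)$.

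First I would record the two facts needed about the regular type II derivations $d_1$ and $d_2$. By Proposition \ref{dop-100}$(1)$ applied to $d_1$ we have $x\le d_1x$ for all $x\in A$, and by Proposition \ref{dop-100}$(3)$ both $d_1$ and $d_2$ are isotone.

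Now fix $x\in A$. Applying the isotonicity of $d_2$ to the inequality $x\le d_1x$ gives $d_2x\le d_2(d_1x)$. Conversely, the hypothesis $d_1x\le d_2x$ together with the isotonicity of $d_2$ yields $d_2(d_1x)\le d_2(d_2x)$, and since $d_2$ is idempotent we have $d_2(d_2x)=d_2x$, so $d_2(d_1x)\le d_2x$. Combining, $d_2x\le d_2(d_1x)\le d_2x$, whence by antisymmetry $d_2(d_1x)=d_2x$. As $x$ was arbitrary, this gives $d_2\circ d_1=d_2$.

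There is no genuinely hard step here: the argument is a direct sandwich of $d_2(d_1x)$ between $d_2x$ and $d_2x$, using only that regular type II derivations are inflationary (Proposition \ref{dop-100}$(1)$) and isotone (Proposition \ref{dop-100}$(3)$), plus the idempotency of $d_2$. The one point that merits a word of care is that the order on a pseudo-BCI algebra is antisymmetric, so that the two inequalities legitimately collapse to an equality; this is immediate from $(psBCI_5)$.
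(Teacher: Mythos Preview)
Your proof is correct and follows essentially the same approach as the paper: both use Proposition~\ref{dop-100}$(1)$ to get $x\le d_1x$, isotonicity of $d_2$ from Proposition~\ref{dop-100}$(3)$ to obtain $d_2x\le d_2d_1x$, and then the hypothesis $d_1x\le d_2x$ together with isotonicity and idempotency of $d_2$ to get the reverse inequality. Your added remark about antisymmetry via $(psBCI_5)$ is a nice point of care, but otherwise the arguments coincide.
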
 
\begin{proof} Let $x\in A$. By Proposition \ref{dop-100}$(1)$,$(3)$, $d_2x\le d_2d_1x=(d_2\circ d_1)(x)$, so 
$d_2\le d_2\circ d_1$. Moreover, since $d_1x\le d_2x$ we have $d_2d_1x\le d_2d_2x=d^2_2x=d_2x$, that is 
$d_2\circ d_1\le d_2$. Hence $d_2\circ d_1=d_2$. 
\end{proof}

\begin{prop} \label{dop-100-10} Let $A$ be a pseudo-BCI algebra and let $d\in \mathcal{RIDOP}^{(II)}(A)$. 
Then $\Ker(d)=\K(A)$ if and only if $d=d_{\varphi}$.
\end{prop}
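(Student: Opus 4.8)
The plan is to prove the two implications separately, leaning on the structural facts about regular type~II derivations already collected in Proposition~\ref{dop-100}.

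For the direction $d=d_{\varphi}\Rightarrow\Ker(d)=\K(A)$, I would argue directly: if $dx=\varphi_x$ for all $x$, then $\Ker(d)=\{x\in A\mid \varphi_x=1\}$. If $x\in\K(A)$, Lemma~\ref{dop-50}$(3)$ gives $\varphi_x=1$, so $x\in\Ker(d)$; conversely, if $\varphi_x=1$, then from $x\le x\Cup_1 1=\varphi_x$ (Lemma~\ref{psBE-110}$(5)$) we get $x\le 1$, i.e.\ $x\in\K(A)$. Hence $\Ker(d)=\K(A)$. This direction is immediate and uses only Lemma~\ref{dop-50} and Lemma~\ref{psBE-110}.

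For the converse, assume $\Ker(d)=\K(A)$ and fix $x\in A$. The first thing I would record is that, for any $d\in\mathcal{RIDOP}^{(II)}(A)$, the chain of inequalities in Proposition~\ref{dop-100}$(2)$ forces the identities $d(u\ra v)=u\ra dv$ and $d(u\rs v)=u\rs dv$ for all $u,v\in A$. Applying the second identity with $u=x\rs 1$, $v=1$, together with $d1=1$ (regularity) and $\varphi_x=(x\rs 1)\rs 1$, yields $d\varphi_x=(x\rs 1)\rs d1=(x\rs 1)\rs 1=\varphi_x$, so $\varphi_x$ is a fixed point of $d$. Since $x\le\varphi_x$ and $d$ is isotone (Proposition~\ref{dop-100}$(3)$), this gives $dx\le d\varphi_x=\varphi_x$. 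For the reverse inequality I would invoke the hypothesis: by Lemma~\ref{dop-50}$(2)$, $\varphi_x\ra x\in\K(A)=\Ker(d)$, hence $d(\varphi_x\ra x)=1$; but the collapsed identity above says $d(\varphi_x\ra x)=\varphi_x\ra dx$, so $\varphi_x\ra dx=1$, that is $\varphi_x\le dx$. Combining the two inequalities with antisymmetry $(psBCI_5)$ gives $dx=\varphi_x$, and since $x$ was arbitrary, $d=d_{\varphi}$.

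The whole converse is a two-sided squeeze of $dx$ between $x$ and $\varphi_x$, and the hypothesis $\Ker(d)=\K(A)$ is used in exactly one place, namely to obtain $\varphi_x\le dx$ from $\varphi_x\ra x\in\K(A)$; so the step I expect to need the most care is verifying that $d(\varphi_x\ra x)=\varphi_x\ra dx$ is precisely what Proposition~\ref{dop-100}$(2)$ delivers, and that the fixed-point computation $d\varphi_x=\varphi_x$ only uses regularity and the $\rs$-identity.
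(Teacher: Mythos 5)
Your proof is correct, and its core coincides with the paper's: in the hard direction both arguments use $\varphi_x\ra x\in\K(A)=\Ker(d)$ (Lemma \ref{dop-50}$(2)$) together with the identity $d(x\ra y)=x\ra dy$ from Proposition \ref{dop-100}$(2)$ to obtain $\varphi_x\ra dx=d(\varphi_x\ra x)=1$, i.e.\ $\varphi_x\le dx$. Where you diverge is in how the equality $dx=\varphi_x$ is then clinched: the paper simply notes that $\varphi_x\in\At(A)$ (Lemma \ref{dop-50}$(1)$), so any element above an atom equals it, whereas you establish the reverse inequality $dx\le\varphi_x$ via the fixed-point computation $d\varphi_x=(x\rs 1)\rs d1=(x\rs 1)\rs 1=\varphi_x$ together with isotonicity (Proposition \ref{dop-100}$(3)$), and then conclude by antisymmetry; both routes are valid, the paper's being a one-line finish and yours trading the appeal to atoms for an extra (correct) computation. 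Likewise, in the easy direction the paper quotes $\Ker(d)\subseteq\K(A)$ from Proposition \ref{dop-100}$(5)$, while you re-derive that inclusion directly from $x\le\varphi_x$; again both are fine.
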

\begin{proof} Assume that $\Ker(d)=\K(A)$ and let $x\in A$. 
Since by Lemma \ref{dop-50}, $\varphi_x\ra x\in \K(A)$, we have $d(\varphi_x\ra x)=1$. 
Applying Proposition \ref{dop-100}$(2)$ we get $1=d(\varphi_x\ra x)=\varphi_x\ra dx$, hence $\varphi_x\le dx$. 
On the other hand $\varphi_x\in \At(A)$, thus $dx=\varphi_x$. 
Conversely, assume that $dx=\varphi_x$ for all $x\in A$. Since for any $x\in \K(A)$, $dx=\varphi_x=1$, we have 
$x\in \Ker(d)$, so that $\K(A)\subseteq \Ker(d)$. 
By Proposition \ref{dop-100}$(5)$, $\Ker(d)\subseteq \K(A)$, hence $\Ker(d)=\K(A)$. 
\end{proof}

\begin{cor} \label{dop-100-20} Let $A$ be a pseudo-BCI algebra and let $d\in \mathcal{RIDOP}^{(II)}(A)$ such that  
$\Ker(d)=\K(A)$. Then $d$ is idempotent.
\end{cor}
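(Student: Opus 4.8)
The plan is to reduce the statement to the explicit description of $d$ furnished by Proposition \ref{dop-100-10}. Since $d\in \mathcal{RIDOP}^{(II)}(A)$ and, by hypothesis, $\Ker(d)=\K(A)$, that proposition applies and gives $d=d_{\varphi}$, i.e. $dx=\varphi_x$ for every $x\in A$. Hence proving idempotency of $d$ amounts to proving $\varphi_{\varphi_x}=\varphi_x$ for all $x$, because then $d^2x=d(dx)=d(\varphi_x)=\varphi_{\varphi_x}=\varphi_x=dx$.

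To establish $\varphi_{\varphi_x}=\varphi_x$, I would first recall from Lemma \ref{dop-50}$(1)$ that $\varphi_x\in \At(A)$. Applying Proposition \ref{psBE-40-40} to the atom $a:=\varphi_x$ (the equivalence of $(a)$ and $(k)$) yields $(\varphi_x\ra 1)\rs 1=\varphi_x$. On the other hand, by the definition of $\varphi$ together with Lemma \ref{psBE-40-20}$(11)$, we have $\varphi_{\varphi_x}=\varphi_x\Cup_1 1=(\varphi_x\ra 1)\rs 1$. Combining the two identities gives $\varphi_{\varphi_x}=\varphi_x$, and the computation of the previous paragraph finishes the argument.

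There is essentially no real obstacle: the corollary is an immediate consequence of Proposition \ref{dop-100-10} and the fact that the map $x\mapsto \varphi_x$ is idempotent, which itself is just the observation that $\varphi_x$ is always an atom and that atoms are precisely the fixed points of $x\mapsto (x\ra 1)\rs 1$. The only points requiring a little care are citing the correct directions of the equivalences in Propositions \ref{psBE-40-40} and verifying that the reduction $d=d_{\varphi}$ is legitimate, which it is since both hypotheses of Proposition \ref{dop-100-10} are satisfied.
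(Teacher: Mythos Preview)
Your proof is correct and follows essentially the same route as the paper: apply Proposition~\ref{dop-100-10} to identify $d$ with $d_{\varphi}$, then use that $\varphi_x\in\At(A)$ (Lemma~\ref{dop-50}$(1)$) together with Proposition~\ref{psBE-40-40}$(k)$ to obtain $(\varphi_x\ra 1)\rs 1=\varphi_x$, which is exactly the computation the paper carries out. Your write-up is in fact slightly more explicit about the citations than the paper's own proof.
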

\begin{proof} According to Proposition \ref{dop-100-10}, for any $x\in A$ we have: 
$d^2x=ddx=d_{\varphi_{dx}}=(dx\ra 1)\rs 1=(\varphi_x\ra 1)\rs 1=\varphi_x=dx$, hence $d^2=d$. 
\end{proof}

\begin{cor} \label{dop-100-30} Let $A$ be a pseudo-BCI algebra and let $d_1, d_2\in \mathcal{RIDOP}^{(II)}(A)$ 
such that $d_1\le d_2$ and $\Ker(d_1)=\K(A)$. Then $d_2\circ d_1=d_2$.
\end{cor}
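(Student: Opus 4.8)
The plan is to deduce this corollary from Proposition \ref{dop-120-10}, whose hypotheses are that $d_2$ is idempotent and that $d_1\le d_2$. The second is given outright, so the only thing I need to supply is that $d_2$ is idempotent. By Corollary \ref{dop-100-20}, for a regular type II derivation it suffices to show $\Ker(d_2)=\K(A)$, so the crux of the argument is to verify this kernel condition for $d_2$, using the kernel condition on $d_1$ together with the inequality $d_1\le d_2$.

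First I would recall that $\Ker(d_2)\subseteq \K(A)$ holds automatically for any $d_2\in\mathcal{RIDOP}^{(II)}(A)$ by Proposition \ref{dop-100}$(5)$. For the reverse inclusion, I would take $x\in \K(A)=\Ker(d_1)$, so that $d_1x=1$; since $d_1\le d_2$ this gives $1=d_1x\le d_2x$, and then Lemma \ref{psBE-40-20}$(9)$ forces $d_2x=1$, i.e. $x\in\Ker(d_2)$. Hence $\K(A)\subseteq\Ker(d_2)$, and combining the two inclusions yields $\Ker(d_2)=\K(A)$.

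With $\Ker(d_2)=\K(A)$ in hand, Corollary \ref{dop-100-20} applies to $d_2$ and shows $d_2$ is idempotent. Since moreover $d_1,d_2\in\mathcal{RIDOP}^{(II)}(A)$ and $d_1\le d_2$, Proposition \ref{dop-120-10} applies directly and gives $d_2\circ d_1=d_2$, which is the claim.

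I do not expect a real obstacle here: the corollary is essentially a bookkeeping consequence of the preceding three results, and the only observation needed is that the inequality $d_1\le d_2$ transports the regularity/kernel condition from $d_1$ up to $d_2$. As an alternative route one could instead invoke Proposition \ref{dop-100-10} to write $d_1=d_{\varphi}$, note that $d_1x=\varphi_x\le d_2x$ with $\varphi_x\in\At(A)$ (Lemma \ref{dop-50}$(1)$) forces $d_2x=\varphi_x=d_1x$, hence $d_1=d_2$, and then conclude via Corollary \ref{dop-100-20} that $d_2\circ d_1=d_1\circ d_1=d_1=d_2$; but the first route keeps the dependence on exactly the two immediately preceding results.
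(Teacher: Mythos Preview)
Your proof is correct and follows essentially the same route as the paper's: from $d_1\le d_2$ one obtains $\Ker(d_1)\subseteq\Ker(d_2)$ (your argument via Lemma \ref{psBE-40-20}$(9)$ is exactly what is implicit in the paper's one-line claim), whence $\Ker(d_2)=\K(A)$, so Corollary \ref{dop-100-20} gives idempotence of $d_2$ and Proposition \ref{dop-120-10} finishes. The alternative route you sketch via Proposition \ref{dop-100-10} is also valid and in fact shows the stronger conclusion $d_1=d_2$.
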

\begin{proof} Since $d_1\le d_2$, we have $\Ker(d_1)\subseteq \Ker(d_2)$, that is $\Ker(d_1)=\Ker(d_2)=\K(A)$. 
According to Corollary \ref{dop-100-20}, $d_1$ and $d_2$ are idempotent, and applying Proposition \ref{dop-120-10} 
it follows that $d_2\circ d_1=d_2$. 
\end{proof}

\begin{ex} \label{dop-100-40} With the notations from Example \ref{dop-30-10}, we have: \\
$(1)$ $\K(A)=\{a,b,c,1\};$ \\
$(2)$ $\Ker(d_3)=\{a, b, c, 1\}=\K(A)$ and $d_3^2=d_3;$ \\
$(3)$ $d_1(\K(A))=\K(A)$ and $d_3(\K(A))=\{1\}\subseteq \K(A)$.  
\end{ex}

\begin{prop} \label{dop-100-70} Let $A$ be a pseudo-BCI algebra and let $d\in \mathcal{IDOP}(A)$. 
If there exists $a\in A$ such that $a\le dx$ for all $x\in A$, then: \\
$(1)$ $d\in \mathcal{RIDOP}(A);$ \\
$(2)$ $A$ is a pseudo-BCK algebra. 
\end{prop}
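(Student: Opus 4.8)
The plan is to establish part $(1)$ first and then deduce part $(2)$ almost immediately. The crucial opening observation uses only the type II axiom: putting $1$ in place of $x$ in $(idop_3)$ and using $(psBCI_3)$ gives, for every $x\in A$, $dx=d(1\ra x)=(d1\ra x)\Cup_2(1\ra dx)=(d1\ra x)\Cup_2 dx$, so Lemma \ref{psBE-110}$(5)$ yields the inequality $d1\ra x\le dx$ for all $x\in A$.

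Next I would substitute $x:=d1$ into this inequality. Since $d1\ra d1=1$ by Lemma \ref{psBE-40-20}$(1)$, we obtain $1\le d(d1)$, and Lemma \ref{psBE-40-20}$(9)$ forces $d(d1)=1$. Applying the hypothesis with $x:=d1$ then gives $a\le d(d1)=1$, i.e. $a\in\K(A)$. Now I bring in the type I axiom: by Proposition \ref{dop-40}$(1)$ at $x=1$ we have $d1=d1\Cup_1 1=\varphi_{d1}$, hence $d1\in\At(A)$ by Lemma \ref{dop-50}$(1)$. Finally the hypothesis at $x=1$ gives $a\le d1$, so $a$ is a common lower bound of $1$ and $d1$; by \cite{Dym2} the element $a$ lies in a single branch of $A$, which equals $V(1)=\K(A)$ because $a\le 1$ and which also contains $d1$ because $a\le d1$; therefore $d1\le 1$, and as $d1$ is an atom this forces $d1=1$. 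Thus $d\in\mathcal{RIDOP}(A)$.

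For part $(2)$, the regularity just obtained places $d$ in $\mathcal{RIDOP}^{(II)}(A)$, so Proposition \ref{dop-100}$(1)$ gives $x\le dx$ for all $x\in A$, and moreover $a\le d1=1$. Fix an arbitrary $x\in A$: from $x\le dx$ and $a\le dx$ the elements $x$ and $a$ both lie in the branch of $A$ containing $dx$; since $a\le 1$, that branch is $V(1)$, whence $x\le 1$. As $x$ was arbitrary, $A=\K(A)$, i.e. $A$ is a pseudo-BCK algebra.

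The only step that is not a mechanical calculation is the substitution $x:=d1$ in $d1\ra x\le dx$: this is exactly what converts the hypothesis that $a$ is a common lower bound of all the values of $d$ into the collapse $d(d1)=1$, and hence into $a\le 1$. Everything after that is bookkeeping with the branch decomposition of $A$ and the elementary identities of Lemmas \ref{psBE-40-20} and \ref{psBE-110}.
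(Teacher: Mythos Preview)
Your argument is correct, and it follows a genuinely different route from the paper's own proof.

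For part $(1)$, the paper first computes $a\ra d(a\ra x)$ using the type~I identity $(idop_1)$: since $a\ra dx=1$, one gets $d(a\ra x)=1\Cup_2(da\ra x)=1$, whence $a\ra 1=1$, i.e.\ $a\le 1$. The paper then evaluates $d1=d(a\ra 1)$ directly via the derivation identities, using $a\ra d1=1$ and the fact that $da\ra 1\in\At(A)$, to obtain $d1=1$. Your approach instead extracts the inequality $d1\ra x\le dx$ from the type~II identity (this is Lemma~\ref{dop-130-10}$(1)$), specializes to $x=d1$ to force $d(d1)=1$ and hence $a\le 1$, and then invokes $d1\in\At(A)$ (Lemma~\ref{dop-130}$(1)$) together with the branch decomposition of $A$ to conclude $d1=1$.

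For part $(2)$, the paper argues computationally: from $x\le dx$ one has $dx\ra 1\le x\ra 1$, and then $dx\ra 1=dx\ra(a\rs dx)=a\rs(dx\ra dx)=a\rs 1=1$ via Lemma~\ref{psBE-40-20}$(8)$, giving $x\ra 1=1$. You instead reuse the branch decomposition: $x$, $a$, and $dx$ all lie in one branch, which must be $V(1)$ since $a\le 1$.

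Both arguments are sound. Yours leans on the structural picture of pseudo-BCI algebras (atoms and branches) in place of the paper's explicit calculations with the derivation identities; this makes the logic of the proof more transparent, at the cost of importing the branch machinery from \cite{Dym2}. The paper's computations are more self-contained but also more delicate to verify.
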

\begin{proof}
$(1)$ From $a\le dx$, we have $a\ra dx=a\rs dx=1$ for all $x\in A$, so $a\ra d(a\ra x)=1$ and $a\ra d1=a\rs d1=1$. 
Then we get $1=a\ra d(a\ra x)=a\ra (a\ra dx)\Cup_2 (da\ra x)=a\ra (1\Cup_2 (da\ra x))=a\ra 1$ and 
similarly, $a\rs 1=1$. It follows that 
$d1=d(a\ra 1)=(da\ra 1)\Cup_1 (a\ra d1)=(da\ra 1)\Cup_1 1=((da\ra 1)\ra 1)\rs 1=1$ (from $1\in \At(A)$ we have 
$da\ra 1\in \At(A)$, hence $(da\ra 1)\ra 1)\rs 1=1$). Hence $d1=1$, so $d\in \mathcal{RIDOP}(A)$. \\
$(2)$ Since $d$ is regular, we have $x\le dx$, thus $dx\ra 1\le x\ra 1$ for all $x\in A$. Then we have: 
$x\ra 1\ge dx\ra 1=dx\ra (a\rs dx)=a\rs (dx\ra dx)=a\rs 1=1$, hence $x\ra 1=1$. 
We conclude that $A$ is a pseudo-BCK algebra. 
\end{proof}


\begin{lemma} \label{dop-130} Let $A$ be a pseudo-BCI algebra and let $d\in \mathcal{IDOP}^{(I)}(A)$. Then: \\
$(1)$ $d1\in \At(A);$ \\
$(2)$ $da=(a\ra 1)\ra d1=(a\ra 1)\rs d1=a\cdot d1$, for all $a\in \At(A);$ \\
$(3)$ $d(dx\ra x)=d(dx\rs x)=1$, for all $x\in A$. 
\end{lemma}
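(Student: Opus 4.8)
The plan is to exploit the type-I identities $(idop_1)$, $(idop_2)$ together with the already-established facts that the image of $\varphi$ lands in $\At(A)$ (Lemma \ref{dop-50}) and the Proposition \ref{dop-40}$(1)$ formula $dx=dx\Cup_1 x=dx\Cup_2 x$. For part $(1)$, I would compute $d1=d(1\ra 1)$ using $(idop_1)$: this gives $d1=(1\ra d1)\Cup_2(d1\ra 1)=d1\Cup_2(d1\ra 1)$, and unwinding $\Cup_2$ and applying Lemma \ref{psBE-40-20}$(8)$ in the same style as the proof of Proposition \ref{dop-40}$(1)$ should reduce this to an identity forcing $(d1\ra 1)\rs 1=d1$ (equivalently $(d1\rs 1)\ra 1=d1$), which by Proposition \ref{psBE-40-40}$(b)$,$(k)$ is exactly the statement $d1\in\At(A)$. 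Alternatively, and perhaps more cleanly, one can observe $1\in\At(A)$ trivially and then use part $(2)$ with $a:=1$: $d1=(1\ra 1)\ra d1=1\ra d1=d1$, so the real content is the atom-ness coming out of the $\Cup_2$ manipulation — I would present $(1)$ directly from $d1=d(1\ra 1)$.

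For part $(2)$, let $a\in\At(A)$. I would start from $da=d(1\ra a)$ and apply $(idop_1)$: $da=(1\ra da)\Cup_2(d1\ra a)=da\Cup_2(d1\ra a)$. Since $d1\in\At(A)$ by $(1)$ and $a\in\At(A)$, Corollary \ref{psBE-40-50} gives $d1\ra a\in\At(A)$; now Proposition \ref{dop-40}$(1)$ says $da=da\Cup_2 a$, and one wants to identify $da$. The key is that $da\ge a$ is \emph{not} automatic for type-I derivations, so instead I would use $(idop_1)$ the other way — compute $d(a\ra 1)$. We have $a\ra 1\in\At(A)$ (Corollary \ref{psBE-40-50}), and $d(a\ra 1)=(a\ra d1)\Cup_2(da\ra 1)$; meanwhile, using $(psBCI_3)$/$(psBCI_4)$ and Lemma \ref{psBE-40-20}, on atoms the operations behave like the group structure of Proposition \ref{psBE-40-30}$(i)$, so $a\ra d1=d1\cdot a^{-1}$. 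Combining the relation $da=da\Cup_2(d1\ra a)$ with Proposition \ref{dop-40}$(1)$ and the atom-group identities $x\ra y=y\cdot x^{-1}$, $x\rs y=x^{-1}\cdot y$ restricted to the subalgebra of atoms, I expect both $(a\ra 1)\ra d1$ and $(a\ra 1)\rs d1$ to collapse to $a^{-1}\cdot d1 = d1\cdot a^{-1}$… — more carefully, I would verify $(a\ra 1)\ra d1=(a\ra 1)\rs d1$ first (both equal $a\cdot d1$ in the group notation, using that $\At(A)$ with $\cdot$ is a group, not necessarily abelian, so I must be careful which product appears), and then show this common value equals $da$ by plugging into $(idop_1)$ and using injectivity from Proposition \ref{psBE-40-40}$(e)$,$(f)$ or the cancellation in the atom group.

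For part $(3)$, fix $x\in A$ and set $a:=dx\ra x$. By Proposition \ref{dop-40}$(1)$, $dx=dx\Cup_2 x=(dx\rs x)\ra x$, and one checks $dx\rs x$, $dx\ra x$ need not be in $\K(A)$ in general — but the claim is only that $d$ annihilates them. I would apply $(idop_1)$ to $d(dx\ra x)=(dx\ra dx)\Cup_2(d(dx)\ra x)=1\Cup_2(d^2x\ra x)=1$ by Lemma \ref{psBE-110}$(1)$, using $d(dx)=d^2x$ and $dx\ra dx=1$ from Lemma \ref{psBE-40-20}$(1)$; this needs $d(dx\ra x)$ to be an instance of $(idop_1)$ with the two arguments $dx$ and $x$, which it is. Symmetrically $d(dx\rs x)=(dx\rs dx)\Cup_1(d^2x\rs x)=1\Cup_1(d^2x\rs x)=1$ by Lemma \ref{psBE-110}$(1)$. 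So $(3)$ is essentially a one-line substitution once $(idop_1)$, $(idop_2)$ are in hand.

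The main obstacle will be part $(2)$: pinning down exactly which group product $(a\ra 1)\ra d1$ versus $(a\ra 1)\rs d1$ represents in the non-commutative atom group, and verifying they coincide. I would handle this by working inside the subalgebra generated by the atoms (where Proposition \ref{psBE-40-30}$(i)$ applies, since that subalgebra is $p$-semisimple), translating everything to the group language $x^{-1}=x\ra 1=x\rs 1$, and then the equality $(a\ra 1)\ra d1=(a\ra 1)\rs d1=a\cdot d1$ becomes the statement $d1\cdot a = a^{-1}{}^{-1}\cdot d1$ in that group — which is an identity in any group — combined with the defining relations $x\ra y=y\cdot x^{-1}$, $x\rs y=x^{-1}\cdot y$; and the identification with $da$ then follows from substituting back into $(idop_1)$ and cancelling.
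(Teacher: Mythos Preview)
Your arguments for parts $(1)$ and $(3)$ are essentially correct and match the paper's (for $(1)$ the paper simply cites Proposition~\ref{dop-40}$(1)$: $d1=d1\Cup_1 1=(d1\ra 1)\rs 1$, which by Proposition~\ref{psBE-40-40}$(k)$ gives $d1\in\At(A)$; for $(3)$ your substitution into $(idop_1)$, $(idop_2)$ together with Lemma~\ref{psBE-110}$(1)$ is exactly right).

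Part $(2)$, however, has a real gap. Your opening computation $da=d(1\ra a)=da\Cup_2(d1\ra a)$ is correct but useless: the unknown $da$ sits in the \emph{first} slot of $\Cup_2$, and you cannot extract $d1\ra a\le da$ from $da=da\Cup_2(d1\ra a)$ alone (Lemma~\ref{psBE-110}$(5)$ only gives $x\le x\Cup_2 y$, not $y\le x\Cup_2 y$). More seriously, your plan to verify $(a\ra 1)\ra d1=(a\ra 1)\rs d1$ \emph{group-theoretically} is mistaken. In the atom group of Proposition~\ref{psBE-40-30}$(i)$ one has $x\ra y=y\cdot x^{-1}$ and $x\rs y=x^{-1}\cdot y$, so with $a\ra 1=a^{-1}$,
\[
(a\ra 1)\ra d1=d1\cdot a,\qquad (a\ra 1)\rs d1=a\cdot d1,
\]
and these are \emph{not} equal in a non-abelian group; your line ``$d1\cdot a=a^{-1}{}^{-1}\cdot d1$ \dots which is an identity in any group'' is precisely the assertion $d1\cdot a=a\cdot d1$, which is false in general. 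Their equality is a \emph{consequence} of the lemma (both turn out to equal $da$), not a prerequisite.

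The missing idea is to exploit the atom representation of $a$ rather than the trivial one $a=1\ra a$. Write $a=(a\ra 1)\rs 1$ (Proposition~\ref{psBE-40-40}$(k)$) and apply $(idop_2)$:
\[
da=d\bigl((a\ra 1)\rs 1\bigr)=\bigl((a\ra 1)\rs d1\bigr)\Cup_1\bigl(d(a\ra 1)\rs 1\bigr).
\]
Now the \emph{first} argument $(a\ra 1)\rs d1$ is an atom (Corollary~\ref{psBE-40-50}, since $d1\in\At(A)$), and for any atom $u$ Proposition~\ref{psBE-40-40}$(b)$ gives $u\Cup_1 v=u$; hence $da=(a\ra 1)\rs d1=a\cdot d1$. (The paper reaches the same conclusion via an explicit inequality chain in the style of Proposition~\ref{dop-40}$(1)$, showing $da\ge(a\ra 1)\rs d1$ and then invoking atomicity.) The companion identity $da=(a\ra 1)\ra d1$ follows by the symmetric computation starting from $a=(a\rs 1)\ra 1$ and $(idop_1)$, using $a\rs 1=a\ra 1$ (Lemma~\ref{psBE-40-20}$(11)$).
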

\begin{proof}
$(1)$ By Proposition \ref{dop-40}$(1)$, $d1=d1\Cup_1 1=d1\Cup_2 1=(d1\ra 1)\rs 1=(d1\rs 1)\ra 1$, hence 
by Proposition \ref{psBE-40-40}$(k)$, $d1\in \At(A)$. \\
$(2)$ Applying Lemma \ref{psBE-40-20}$(8)$,$(7)$,$(2)$ we have: \\
$\hspace*{2cm}$   $da=d((a\ra 1)\rs 1)=((a\ra 1)\rs d1)\Cup_1 (d(a\ra 1)\rs 1)$ \\
$\hspace*{2.5cm}$ $=(((a\ra 1)\rs d1)\ra (d(a\ra 1)\rs 1))\rs (d(a\ra 1)\rs 1)$ \\
$\hspace*{2.5cm}$ $=(d(a\ra 1)\rs (((a\ra 1)\rs d1)\ra 1))\rs (d(a\ra 1)\rs 1)$ \\
$\hspace*{2.5cm}$ $\ge (((a\ra 1)\rs d1)\ra 1)\rs 1\ge (a\ra 1)\rs d1$. \\
Since by $(1)$, $d1\in \At(A)$, according to Corollary \ref{psBE-40-50}, $(a\ra 1)\rs d1\in \At(A)$.  
It follows that $da=(a\ra 1)\rs d1$. Similarly, $da=(a\ra 1)\ra d1$. Obviously $da=a\cdot d1$. \\
$(3)$ For any $x\in A$ we have $d(dx\ra x)=(dx\ra dx)\Cup_2 (x\ra ddx)=1\Cup_2(x\ra ddx)=1$ and 
similarly, $d(dx\rs x)=1$. 
\end{proof}

\begin{lemma} \label{dop-130-10} Let $A$ be a pseudo-BCI algebra and let $d\in \mathcal{IDOP}^{(II)}(A)$. Then: \\
$(1)$ $d1\ra x\le dx$ and $d1\rs x\le dx$, for all $x\in A;$ \\
$(2)$ $da=d1\ra a=d1\rs a=d1\cdot a$, for all $a\in \At(A)$. 
\end{lemma}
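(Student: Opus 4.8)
The plan is to mimic the structure of Lemma \ref{dop-130} but with the type II axioms $(idop_3)$, $(idop_4)$ in place of the type I ones. First I would prove $(1)$: for arbitrary $x\in A$ I apply $(idop_3)$ to the product $1\ra x=x$, obtaining
\[
dx=d(1\ra x)=(d1\ra x)\Cup_2 (1\ra dx)=(d1\ra x)\Cup_2 dx=((d1\ra x)\rs dx)\ra dx.
\]
By Lemma \ref{psBE-40-20}$(2)$ we have $d1\ra x\le ((d1\ra x)\rs dx)\ra dx$ — wait, more directly, $dx=(d1\ra x)\Cup_2 dx$ and by Lemma \ref{psBE-110}$(5)$, $d1\ra x\le (d1\ra x)\Cup_2 dx=dx$, which is exactly the first inequality. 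Symmetrically, applying $(idop_4)$ to $1\rs x=x$ gives $dx=(d1\rs x)\Cup_1 dx$, and Lemma \ref{psBE-110}$(5)$ yields $d1\rs x\le dx$. So $(1)$ falls out immediately from Proposition \ref{dop-40}$(2)$'s computations together with the monotonicity-type facts in Lemma \ref{psBE-110}.

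For $(2)$ the starting point is that $d1\in\At(A)$. This needs justification analogous to Lemma \ref{dop-130}$(1)$, but here $d$ is only type II; however one can still argue $d1$ is an atom: from $(1)$ with $x:=1$ we get nothing new, so instead I would note $d1\ra 1$ and use Lemma \ref{dop-130-10}$(1)$ applied cleverly — actually the cleanest route is: $d1\ra 1\le d1$ by part $(1)$ with $x:=1$, i.e. $d1\ra 1\le d1$, equivalently $1\le (d1\ra 1)\rs d1$... hmm. Alternatively, apply $(idop_4)$ to $1\rs 1=1$: $d1=d(1\rs 1)=(d1\rs 1)\Cup_1(1\rs d1)=(d1\rs 1)\Cup_1 d1=((d1\rs 1)\ra d1)\rs d1$. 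Combined with $(idop_3)$ on $1\ra 1$ similarly, one recovers $d1=(d1\ra 1)\rs 1=(d1\rs 1)\ra 1$ exactly as in Proposition \ref{dop-40}, so by Proposition \ref{psBE-40-40}$(k)$, $d1\in\At(A)$. Then for $a\in\At(A)$, I write $a=(a\ra 1)\rs 1$ by Proposition \ref{psBE-40-40}$(k)$ and apply $(idop_4)$:
\[
da=d((a\ra 1)\rs 1)=((d(a\ra 1))\rs 1)\Cup_1((a\ra 1)\rs d1).
\]
Now $(a\ra 1)\rs d1\in\At(A)$ by Corollary \ref{psBE-40-50} since $d1\in\At(A)$, and by Lemma \ref{psBE-40-20}$(2)$ this element is $\le ((d(a\ra 1))\rs 1)\Cup_1((a\ra 1)\rs d1)=da$; since an atom below $da$ forces equality only when $da$ is in its branch — actually an atom $b$ with $b\le da$ gives $b=da$ iff $da\le b$, so I instead use that $da=(\cdots)\Cup_1(\cdots)\ge (a\ra 1)\rs d1$ together with a reverse inequality obtained as in Lemma \ref{dop-130}$(2)$: expand $((d(a\ra 1))\rs 1)\Cup_1((a\ra 1)\rs d1)$ using Lemma \ref{psBE-40-20}$(8)$,$(7)$ to bound it above by $(((a\ra 1)\rs d1)\ra 1)\rs 1=(a\ra 1)\rs d1$ (the last equality because $(a\ra 1)\rs d1\in\At(A)$). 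Hence $da=(a\ra 1)\rs d1$. The symmetric computation with $(idop_3)$ and $a=(a\rs 1)\ra 1$ gives $da=(a\rs 1)\ra d1$; but I also want $da=d1\ra a$ and $da=d1\rs a$, which follow from Proposition \ref{psBE-40-40}$(i)$,$(j)$ (with the roles of $a$ and a suitable element) — concretely, since $a\in\At(A)$, $(i)$ gives $x\ra a=(a\ra 1)\rs(x\ra 1)$; I'd want instead the identity $(a\ra 1)\rs d1=d1\ra a$, which is Proposition \ref{psBE-40-30}$(e)$ or $(i)$ applied in the $p$-semisimple-like algebra generated, but more safely it is Proposition \ref{psBE-40-40}$(i)$ read as $d1\ra a=(a\ra 1)\rs(d1\ra 1)$ — not quite.

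The main obstacle, then, is the bookkeeping in part $(2)$: getting from $da=(a\ra 1)\rs d1$ (and its dual) to the symmetric form $da=d1\ra a=d1\rs a=d1\cdot a$. The clean way is to invoke that atoms behave group-like: for $a\in\At(A)$, Proposition \ref{psBE-40-40}$(i)$,$(j)$ give $x\ra a=(a\ra 1)\rs(x\ra 1)$ and $x\rs a=(a\rs 1)\ra(x\rs 1)$ for all $x$; taking $x:=d1$ (noting $d1\in\At(A)$ too, so $d1\ra 1=d1^{-1}$ and $(a\ra 1)\rs(d1\ra 1)$ relates to $(a\ra 1)\rs d1$ via $(k)$) one matches the expressions. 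I expect this final identification to be the only genuinely fiddly step; everything else is a direct transcription of the type I argument with $\Cup_1\leftrightarrow\Cup_2$ and the axiom factors swapped, using Lemma \ref{psBE-40-20}$(8)$,$(7)$,$(2)$, Corollary \ref{psBE-40-50}, and Proposition \ref{psBE-40-40}.
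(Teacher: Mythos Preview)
Your argument for $(1)$ is correct and matches the paper's exactly.

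For $(2)$ you take a long detour that is unnecessary, and the detour is triggered by a genuine error. You write ``an atom $b$ with $b\le da$ gives $b=da$ iff $da\le b$'' and then abandon the direct approach because of this. That claim is false: by definition an atom is a \emph{maximal} element, so $b\in\At(A)$ and $b\le da$ already force $da=b$, with no further hypothesis. This is exactly the shortcut the paper exploits. The paper's proof of $(2)$ is two lines: from $(1)$ one has $d1\ra a\le da$; since $a\in\At(A)$, Corollary~\ref{psBE-40-50} gives $d1\ra a\in\At(A)$; hence $da=d1\ra a$ by the atom property. Likewise $da=d1\rs a$. There is no need to establish $d1\in\At(A)$ beforehand, no need to rewrite $a=(a\ra 1)\rs 1$ and apply $(idop_4)$, and no need for the Lemma~\ref{psBE-40-20}$(7)$,$(8)$ manipulations you borrowed from the type~I argument.

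For the final identity $da=d1\cdot a$, your worry about ``bookkeeping'' via Proposition~\ref{psBE-40-40}$(i)$,$(j)$ is also avoidable. Once $da=d1\ra a=d1\rs a$ holds for every atom $a$, specialize to $a=1\in\At(A)$ to get $d1=d1\ra 1$; then $d1\cdot a=(d1\ra 1)\rs a=d1\rs a=da$ immediately. Your longer route via $da=(a\ra 1)\rs d1$ would land on $a\cdot d1$ rather than $d1\cdot a$, creating the extra conversion problem you struggled with at the end.
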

\begin{proof}
$(1)$ For all $x\in A$ we have $dx=d(1\ra x)=(d1\ra x)\Cup_2 dx\ge d1\ra x$ and 
$dx=d(1\rs x)=(d1\rs x)\Cup_1 dx\ge d1\rs x$. \\
$(2)$ From $d1\ra a\le da$, since $a\in \At(A)$ we get $d1\ra a\in \At(A)$, hence $da=d1\ra a$. 
Similarly, $da=d1\rs a$. Since $d1=d1\ra 1$, we have $da=d1\rs a=(d1\ra 1)\rs a=d1\cdot a$.   
\end{proof}

\begin{prop} \label{dop-140} Let $A$ be a pseudo-BCI algebra and let 
$d\in \mathcal{IDOP}^{(I)}(A)\cup \mathcal{IDOP}^{(II)}(A)$. Then: \\
$(1)$ $d(\At(A))\subseteq \At(A);$\\
$(2)$ if $d1\in \K(A)$, then $d1=1;$ \\
$(3)$ $d(a\cdot b)=da\cdot (d1\ra 1)\cdot db$, for all $a, b\in \At(A);$ \\
$(4)$ $d_{\mid \At(A)}=Id_{\At(A)}$ iff $d1=1$. 
\end{prop}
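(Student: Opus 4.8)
The plan is to handle the two cases $d\in\mathcal{IDOP}^{(I)}(A)$ and $d\in\mathcal{IDOP}^{(II)}(A)$ in parallel, leaning on Lemmas \ref{dop-130} and \ref{dop-130-10} which already give, in each case, the explicit formula $da = (a\ra 1)\rs d1 = a\cdot d1$ (type I) respectively $da = d1\ra a = d1\cdot a$ (type II) for $a\in\At(A)$, together with the fact that $d1\in\At(A)$. I would also recall from Proposition \ref{psBE-40-40}$(k)$ that $\At(A)$ consists exactly of the elements fixed by $x\mapsto(x\ra 1)\rs 1$, and from Corollary \ref{psBE-40-50} that $\At(A)$ is closed under $x\mapsto x\ra a$ and $x\mapsto x\rs a$ when $a\in\At(A)$.

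For $(1)$: in the type I case, $da=(a\ra 1)\rs d1$ with $d1\in\At(A)$, so $da\in\At(A)$ by Corollary \ref{psBE-40-50}; in the type II case, $da=d1\ra a$ with $a\in\At(A)$, again in $\At(A)$ by the same corollary. For $(2)$: by Lemma \ref{dop-130}$(1)$ (resp. \ref{dop-130-10} combined with the type I lemma — more precisely, in both cases $d1\in\At(A)$), so $d1=(d1\ra 1)\rs 1$; if moreover $d1\le 1$, i.e. $d1\in\K(A)$, then $d1\ra 1=1$ and hence $d1=1\rs 1=1$. For $(4)$: if $d1=1$, then for $a\in\At(A)$ the formula gives $da=a\cdot 1 = a$ in the type I case and $da=1\ra a=a$ in the type II case, using $(psBCI_3)$ and that $1$ is the group identity on the branch; conversely, $1\in\At(A)$ always, so if $d$ fixes $\At(A)$ pointwise then $d1=1$.

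The one part requiring genuine computation is $(3)$, the formula $d(a\cdot b)=da\cdot(d1\ra 1)\cdot db$ for $a,b\in\At(A)$. Here I would first note $a\cdot b=(a\ra 1)\rs b$ lies in $\At(A)$ by Corollary \ref{psBE-40-50}, so the formulas of Lemmas \ref{dop-130}$(2)$ / \ref{dop-130-10}$(2)$ apply to it. In the type II case, $d(a\cdot b)=d1\cdot(a\cdot b)=d1\cdot a\cdot b$, while $da\cdot(d1\ra 1)\cdot db = (d1\cdot a)\cdot(d1^{-1})\cdot(d1\cdot b)$ using $d1\ra 1=d1^{-1}$ in the group structure of the branch through $1$ (Proposition \ref{psBE-40-30}$(i)$ applied inside the appropriate $p$-semisimple-like branch — more carefully, one works in $\At(A)$, which carries the group structure $x\cdot y$ with inverse $x\ra 1$); associativity then collapses this to $d1\cdot a\cdot b$, matching. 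The type I case is symmetric, using $da=a\cdot d1$ and again cancelling $d1\ra 1$ between the two factors. I expect the main obstacle to be bookkeeping: making sure the group-theoretic identities on atoms (associativity, $x\cdot x^{-1}=1$, $x^{-1}=x\ra 1=x\rs 1$) are invoked with the correct justification — they hold because $\At(A)$ with $\cdot$ is a group, equivalently because every element of $\At(A)$ satisfies the $p$-semisimple identities of Proposition \ref{psBE-40-30} relative to $\At(A)$ — and keeping the left/right ($\ra$ vs $\rs$) versions straight throughout, especially in the non-commutative setting where $a\cdot b\ne b\cdot a$ in general.
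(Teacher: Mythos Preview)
Your proposal is correct and matches the paper's proof essentially line for line: both arguments reduce everything to the explicit formulas $da=a\cdot d1$ (type~I) and $da=d1\cdot a$ (type~II) from Lemmas~\ref{dop-130} and~\ref{dop-130-10}, then handle $(3)$ by the same group-theoretic cancellation $d(a\cdot b)=(a\cdot b)\cdot d1=(a\cdot d1)\cdot(d1)^{-1}\cdot(b\cdot d1)$ (resp.\ $d1\cdot(a\cdot b)=(d1\cdot a)\cdot(d1)^{-1}\cdot(d1\cdot b)$) inside $\At(A)$. The only cosmetic difference is that for $(2)$ the paper simply invokes part~$(1)$ (just proved) to get $d1\in\At(A)$ in both cases, rather than tracing it back to the lemmas separately.
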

\begin{proof}
$(1)$ Let $a\in \At(A)$ and let $d\in \mathcal{IDOP}^{(I)}(A)$, by Lemma \ref{dop-130}$(1)$, $d1\in \At(A)$, so 
$(a\ra 1)\ra d1\in \At(A)$.  Hence by Lemma \ref{dop-130}$(2)$, $da\in \At(A)$. 
If $d\in \mathcal{IDOP}^{(II)}(A)$, the assertion follows applying Lemma \ref{dop-130-10}$(2)$. \\ 
$(2)$ Since $d1\le 1$ and by $(1)$, $d1\in \At(A)$, we get $d1=1$. \\
$(3)$ Let $a, b\in \At(A)$, that is $a\cdot b=(a\ra 1)\rs b\in \At(A)$. 
If $d\in \mathcal{IDOP}^{(I)}(A)$, by Lemma \ref{dop-130}$(2)$ we have: \\
$\hspace*{2cm}$ $a\cdot b=(a\cdot b)\cdot d1=(a\cdot d1)\cdot (d1)^{-1}\cdot (b\cdot d1)=da\cdot (d1\ra 1)\cdot db$. \\
For the case of $d\in \mathcal{IDOP}^{(II)}(A)$, applying Lemma \ref{dop-130-10}$(2)$ we get: \\
$\hspace*{2cm}$ $a\cdot b=d1\cdot (a\cdot b)=(d1\cdot a)\cdot (d1)^{-1}\cdot (d1\cdot b)=da\cdot (d1\ra 1)\cdot db$. \\ 
$(4)$ If $d_{\mid \At(A)}=Id_{\At(A)}$, then $d1=1$. 
Conversely, if $d\in \mathcal{IDOP}^{(I)}(A)\cup \mathcal{IDOP}^{(II)}(A)$ such that $d1=1$, then for all $x\in \At(A)$ 
we have $dx=x\cdot d1=x\cdot 1=x$ and $dx=d1\cdot x=1\cdot x=x$, respectively. Hence $d_{\mid \At(A)}=Id_{\At(A)}$.  
\end{proof}

\begin{ex} \label{dop-150} Let $A$ be the pseudo-BCI algebra and its derivations from Example \ref{dop-30-10}. 
We can see that $\At(A)=\{d, 1\}$, $\mathcal{RIDOP}^{(II)}(A)=\{d_1, d_3\}$ and 
$d_{{1}_{\mid \At(A)}}=d_{{3}_{\mid \At(A)}}=Id_{\At(A)}$. 
Moreover, for each $d\in \{d_1, d_2, d_3\}$ we have $d(\At(A))=\{d, 1\}=\At(A)$. 
\end{ex}

\begin{theo} \label{dop-110} Let $A$ be a pseudo-BCI algebra and let $d:A\longrightarrow A$. 
Then $d\in \mathcal{RIDOP}^{(II)}(A)$ if and only if $d1=1$ and $d(x\ra y)=x\ra dy$, $d(x\rs y)=x\rs dy$, 
for all $x, y\in A$.
\end{theo}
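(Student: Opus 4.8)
The plan is to prove the two implications separately. The forward direction is essentially immediate from results already in hand, while the converse requires a short computation whose engine is the inequality $x\le dx$.

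\emph{Forward direction.} If $d\in\mathcal{RIDOP}^{(II)}(A)$, then $d1=1$ by the definition of regularity (Definition \ref{dop-80}$(1)$), and the two identities $d(x\ra y)=x\ra dy$, $d(x\rs y)=x\rs dy$ are precisely the rightmost equalities in Proposition \ref{dop-100}$(2)$. So no further work is needed here.

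\emph{Converse direction.} Assume $d1=1$ and that $d(x\ra y)=x\ra dy$, $d(x\rs y)=x\rs dy$ hold for all $x,y\in A$. First I would establish that $x\le dx$ for every $x$: taking $y:=x$ in the first identity and using $x\ra x=1$ (Lemma \ref{psBE-40-20}$(1)$) together with $d1=1$ gives $x\ra dx=d(x\ra x)=d1=1$, i.e. $x\le dx$. Next, for arbitrary $x,y$, from $x\le dx$ and Lemma \ref{psBE-40-20}$(5)$ I get $dx\ra y\le x\ra y$, and from $y\le dy$ and Lemma \ref{psBE-40-20}$(6)$ I get $x\ra y\le x\ra dy$; transitivity (Lemma \ref{psBE-40-20}$(10)$) yields $dx\ra y\le x\ra dy$. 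By Lemma \ref{psBE-110}$(2)$ this means $(dx\ra y)\Cup_2(x\ra dy)=x\ra dy$, which equals $d(x\ra y)$ by hypothesis; hence $(idop_3)$ holds. The symmetric argument, using the $\rs$-parts of Lemma \ref{psBE-40-20}$(5)$,$(6)$ and the $\Cup_1$-clause of Lemma \ref{psBE-110}$(2)$, gives $dx\rs y\le x\rs dy$ and hence $(idop_4)$. Thus $d\in\mathcal{IDOP}^{(II)}(A)$, and since $d1=1$ it is regular, so $d\in\mathcal{RIDOP}^{(II)}(A)$.

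There is no real obstacle in this argument; the only point worth flagging is that the hypothesis $d1=1$ is genuinely used (through $d(x\ra x)=d1$) to bootstrap the inequality $x\le dx$, after which everything reduces to monotonicity of the two implications and the characterization of $\le$ in terms of $\Cup_1$ and $\Cup_2$ from Lemma \ref{psBE-110}$(2)$.
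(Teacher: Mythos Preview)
Your proof is correct and follows essentially the same route as the paper: the forward direction is read off from Proposition~\ref{dop-100}$(2)$, and for the converse you derive $x\le dx$ from $d(x\ra x)=d1=1$, then use monotonicity to get $dx\ra y\le x\ra dy$ and conclude via Lemma~\ref{psBE-110}$(2)$. The paper writes out the $\Cup_2$-computation explicitly rather than citing Lemma~\ref{psBE-110}$(2)$, but the argument is the same.
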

\begin{proof}
Let $d\in \mathcal{RIDOP}^{(II)}(A)$, and let $x, y\in A$. 
Clearly $d1=1$, and according to Proposition \ref{dop-100}$(2)$, $d(x\ra y)=x\ra dy$ and $d(x\rs y)=x\rs dy$. 
Conversely, let $d:A\longrightarrow A$ such that $d1=1$ and $d(x\ra y)=x\ra dy$, $d(x\rs y)=x\rs dy$, 
for all $x, y\in A$. Taking $x:=d1$, $y:=1$ we get $d1=d(d1\ra 1)=d1\ra d1=1$. 
For $y:=x$, we have $x\ra dx=d(x\ra x)=d1=1$, so $x\le dx$ for all $x\in A$. 
From $x\le dx$ and $y\le dy$ we get $dx\ra y\le x\ra y\le x\ra dy$ and $dx\rs y\le x\rs y\le x\rs dy$. 
It follows that $d(x\ra y)=x\ra dy=1\ra (x\ra dy)=((dx\ra y)\rs (x\ra dy))\ra (x\ra dy)=(dx\ra y)\Cup_2 (x\ra dy)$. 
Similarly, $d(x\rs y)=(dx\rs y)\Cup_1 (x\rs dy)$, hence $d\in \mathcal{RIDOP}^{(II)}(A)$. 
\end{proof}

\begin{prop} \label{dop-110-20} Let $A$ be a pseudo-BCI algebra and let $d\in \mathcal{IDOP}^{(I)}(A)$ such that 
$\Img(d)\subseteq \At(A)$. Then $d(x\ra y)=x\ra dy$ and $d(x\rs y)=x\rs dy$, for all $x, y\in A$.
\end{prop}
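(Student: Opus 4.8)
The plan is to reduce the statement to the characterization of regular type II derivations already established in Theorem~\ref{dop-110}, namely that it suffices to prove $d1=1$ together with the two identities $d(x\ra y)=x\ra dy$ and $d(x\rs y)=x\rs dy$. Since $d\in \mathcal{IDOP}^{(I)}(A)$, I start from the defining identity $(idop_1)$, that is $d(x\ra y)=(x\ra dy)\Cup_2 (dx\ra y)$, and try to collapse the $\Cup_2$ using the hypothesis $\Img(d)\subseteq \At(A)$.

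First I would record the consequences of $\Img(d)\subseteq \At(A)$. By Lemma~\ref{dop-130}$(1)$ we already know $d1\in \At(A)$ for any type I derivation, so that is automatic; the new content is that $dy\in \At(A)$ for \emph{every} $y$. This lets me apply the atom identities of Proposition~\ref{psBE-40-40} with the atom taken to be $dy$. Concretely, using Proposition~\ref{psBE-40-40}$(b)$ with $a:=dy$ and the element $x\ra dy$ (which is itself an atom by Corollary~\ref{psBE-40-50}, but that is not even needed), one gets handles like $(dy\ra z)\rs z=dy$; and by Lemma~\ref{dop-130}$(2)$ applied to the atom $dy$ one has $d(dy)=(dy\ra 1)\ra d1=(dy\ra 1)\rs d1$. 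The key step I expect to need is to show $dx\ra y\le x\ra dy$ in this setting, so that $(x\ra dy)\Cup_2(dx\ra y)=((x\ra dy)\rs(dx\ra y))\ra(dx\ra y)$ simplifies: since $dx\ra y\le x\ra dy$ gives $(x\ra dy)\rs(dx\ra y)$ behaving like $1$ against the branch of the atom, the $\Cup_2$ of the larger element $x\ra dy$ with the smaller element $dx\ra y$ collapses to $x\ra dy$ itself, using Lemma~\ref{psBE-110}$(2)$ (i.e.\ $u\le v$ iff $u\Cup_2 v=v$) once I know $dx\ra y\le x\ra dy$.

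So the heart of the argument is the inequality $dx\ra y\le x\ra dy$ for all $x,y$. I would get this by first establishing $x\le dx$ for all $x$, i.e.\ regularity-type behavior. From Proposition~\ref{dop-40}$(1)$, for a type I derivation $dx=dx\Cup_1 x=dx\Cup_2 x$, which gives $x\le dx$ directly by Lemma~\ref{psBE-110}$(5)$ — this holds unconditionally, no hypothesis needed. Then from $x\le dx$ we get $dx\ra y\le x\ra y$ by Lemma~\ref{psBE-40-20}$(5)$, and combining with $y\le dy$ (which is $x\le dx$ applied to $y$) and Lemma~\ref{psBE-40-20}$(6)$ we get $x\ra y\le x\ra dy$, hence $dx\ra y\le x\ra dy$ by transitivity (Lemma~\ref{psBE-40-20}$(10)$). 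Feeding this into the collapse of $(idop_1)$ yields $d(x\ra y)=x\ra dy$; the dual identity $d(x\rs y)=x\rs dy$ follows symmetrically from $(idop_2)$. Finally $d1=1$: from $x\le dx$ with $x:=1$ we get $1\le d1$, and by Lemma~\ref{psBE-40-20}$(9)$ this forces $d1=1$ — so in fact $d\in\mathcal{RIDOP}^{(I)}(A)$ and the hypothesis $\Img(d)\subseteq\At(A)$ is only really used to kill the $\Cup_2$/$\Cup_1$ terms.

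The main obstacle I anticipate is verifying that the $\Cup_2$-expression really collapses: I must be careful that $u\le v$ implies $u\Cup_2 v=v$ (Lemma~\ref{psBE-110}$(2)$ is stated as "$x\le y$ iff $x\Cup_2 y=y$", which is exactly what is needed, with $u=dx\ra y$, $v=x\ra dy$), and that the elements involved are genuinely comparable rather than merely lying in the same branch — this is where $\Img(d)\subseteq\At(A)$ may be doing more work than the rough sketch suggests, since in a general pseudo-BCI algebra "same branch" is weaker than comparability. If the bare inequality $dx\ra y\le x\ra dy$ already holds unconditionally (as the argument above suggests), then the atom hypothesis is redundant for this proposition and only streamlines the presentation; but I would double-check whether Proposition~\ref{dop-40}$(1)$ is being invoked correctly, since a subtle dependence there could be where the hypothesis reenters.
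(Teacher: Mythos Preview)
Your argument has a genuine gap at its core step. You claim that Proposition~\ref{dop-40}$(1)$ together with Lemma~\ref{psBE-110}$(5)$ gives $x\le dx$ for every type~I derivation. But Proposition~\ref{dop-40}$(1)$ says $dx=dx\Cup_1 x=dx\Cup_2 x$, and Lemma~\ref{psBE-110}$(5)$ says $u\le u\Cup_i v$; plugging in $u=dx$, $v=x$ yields only the triviality $dx\le dx$, not $x\le dx$. In fact $x\le dx$ is \emph{false} in general for type~I derivations even under the hypothesis $\Img(d)\subseteq\At(A)$: in Example~\ref{dop-30-10} the map $d_2$ satisfies $\Img(d_2)=\{d,1\}=\At(A)$, yet $d_2 1=d\neq 1$, so $1\not\le d_2 1$. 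Consequently your derived inequality $dx\ra y\le x\ra dy$, your conclusion $d1=1$, and your remark that the atom hypothesis might be redundant all collapse. There is also a secondary order issue: even if $dx\ra y\le x\ra dy$ held, Lemma~\ref{psBE-110}$(2)$ would give $(dx\ra y)\Cup_2(x\ra dy)=x\ra dy$, whereas $(idop_1)$ has the arguments in the opposite order, and $\Cup_2$ is not commutative.

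The paper's proof is much shorter and uses the atom hypothesis in an essential way: since $dy\in\At(A)$, Corollary~\ref{psBE-40-50} gives $x\ra dy\in\At(A)$; Lemma~\ref{psBE-110}$(5)$ then gives $x\ra dy\le (x\ra dy)\Cup_2(dx\ra y)=d(x\ra y)$; and the defining property of an atom ($a\le z$ implies $z=a$) forces $d(x\ra y)=x\ra dy$. The dual identity is symmetric. No regularity is claimed or needed.
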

\begin{proof}
Let $x, y\in A$. Since $dy\in \At(A)$, according to Corollary \ref{psBE-40-50}, $x\ra dy\in \At(A)$. 
From $x\ra dy\le (x\ra dy)\Cup_2 (dx\ra y)=d(x\ra y)$, it follows that $d(x\ra y)=x\ra dy$. 
Similarly, $d(x\rs y)=x\rs dy$. 
\end{proof}

\begin{rem} \label{dop-120} If $d\in \mathcal{RIDOP}^{(I)}(A)\cup \mathcal{RIDOP}^{(II)}(A)$ such that 
$d(x\ra y)=dx\ra y$ or $d(x\rs y)=dx\rs y$ for all $x, y\in A$, then $d=\Id_A$. 
Indeed, $dx=d(1\ra x)=d1\ra x=1\ra x=x$ for all $x\in A$, that is $d=\Id_A$. Similarly for $d(x\rs y)=dx\rs y$. 
\end{rem}

\begin{Def} \label{dop-160} Let $A$ be a pseudo-BCI algebra and let $d\in \mathcal{IDOP}(A)$. 
Then $D\in \mathcal {DS}(A)$ is said to be \emph{$d$-invariant} if $d(D)\subseteq D$. 
\end{Def}

\begin{prop} \label{dop-170} Let $A$ be a pseudo-BCI algebra and let 
$d\in \mathcal{IDOP}^{(I)}(A)\cup \mathcal{IDOP}^{(II)}(A)$. If every deductive system of $A$ is $d$-invariant, 
then $d$ is regular.  
\end{prop}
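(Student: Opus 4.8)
The plan is to test the hypothesis on the smallest possible deductive system, namely the singleton $\{1\}$. First I would check that $\{1\}\in\mathcal{DS}(A)$: axiom $(ds_1)$ holds trivially since $1\in\{1\}$, and for $(ds_2)$, if $x\in\{1\}$ and $x\ra y\in\{1\}$ then $x=1$ and $1\ra y=1$, so by $(psBCI_3)$ we get $y=1\ra y=1\in\{1\}$. Hence $\{1\}$ is a deductive system of $A$.

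Next, since by hypothesis every deductive system of $A$ is $d$-invariant, in particular $\{1\}$ is $d$-invariant, i.e.\ $d(\{1\})\subseteq\{1\}$. This forces $d1=1$, which is exactly the definition of $d$ being regular (Definition \ref{dop-80}$(1)$). Note that the argument does not distinguish between $d\in\mathcal{IDOP}^{(I)}(A)$ and $d\in\mathcal{IDOP}^{(II)}(A)$, so it covers the whole union at once.

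There is essentially no obstacle here: the only point requiring a word of justification is that $\{1\}$ is genuinely a deductive system, and that is immediate from $(psBCI_3)$. One could alternatively phrase the proof via any deductive system contained in $\K(A)$, but the singleton $\{1\}$ is the cleanest choice and makes the conclusion $d1=1$ direct.
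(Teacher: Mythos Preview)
Your proof is correct and follows exactly the same approach as the paper: apply the $d$-invariance hypothesis to the deductive system $\{1\}$ to conclude $d1=1$. You include a verification that $\{1\}\in\mathcal{DS}(A)$, which the paper simply asserts, but the argument is otherwise identical.
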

\begin{proof}
Let $d\in \mathcal{IDOP}^{(I)}(A)\cup \mathcal{IDOP}^{(II)}(A)$ and assume that every 
$D\in \mathcal {DS}(A)$ is $d$-invariant. Since $\{1\}\in \mathcal {DS}(A)$, it follows that 
$d(\{1\})\subseteq \{1\}$, hence $d(1)=1$, that is $d$ is 
regular. 
\end{proof}

\begin{prop} \label{dop-180} Let $A$ be a pseudo-BCI algebra and let $d\in \mathcal{RIDOP}^{(II)}(A)$. 
Then every deductive system of $A$ is $d$-invariant. 
\end{prop}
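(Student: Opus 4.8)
The plan is to reduce the statement to the characterization of regular type II derivations given in Theorem \ref{dop-110}. Since $d\in \mathcal{RIDOP}^{(II)}(A)$, that theorem yields $d1=1$ together with the identities $d(x\ra y)=x\ra dy$ and $d(x\rs y)=x\rs dy$ for all $x, y\in A$. (One may also invoke Proposition \ref{dop-100}$(1)$, which gives $x\le dx$ for every $x$, but it is the sharper equality $x\ra dx=1$ that we shall actually exploit.)

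First I would fix an arbitrary deductive system $D\in \mathcal {DS}(A)$ and an arbitrary element $x\in D$; the goal is to show $dx\in D$. Applying the identity from Theorem \ref{dop-110} with $y:=x$, one computes $x\ra dx=d(x\ra x)=d1=1$. By axiom $(ds_1)$ we have $1\in D$, hence $x\ra dx\in D$.

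Now I would apply axiom $(ds_2)$ to the pair $x\in D$ and $x\ra dx\in D$, which immediately yields $dx\in D$. Since $x\in D$ was arbitrary, $d(D)\subseteq D$, i.e. $D$ is $d$-invariant; and since $D$ was arbitrary, every deductive system of $A$ is $d$-invariant, as claimed. (Alternatively one may use $(ds^{\prime}_2)$ together with $x\rs dx=d(x\rs x)=d1=1$.)

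The argument is short and there is essentially no obstacle: the only point requiring a moment's care is recognizing that the relation $x\le dx$ is here witnessed by the \emph{equality} $x\ra dx=1$, so that $x\ra dx$ is forced into every deductive system via $(ds_1)$; from there the conclusion is a one-line use of the closure axiom $(ds_2)$.
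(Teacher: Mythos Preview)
Your proof is correct and follows essentially the same route as the paper: both arguments establish $x\ra dx=1\in D$ and then apply $(ds_2)$ to conclude $dx\in D$. The only cosmetic difference is that the paper cites Proposition~\ref{dop-100}$(1)$ (i.e.\ $x\le dx$) directly, whereas you derive $x\ra dx=1$ from the identity in Theorem~\ref{dop-110} by setting $y:=x$.
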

\begin{proof}
Assume that $d\in \mathcal{RIDOP}^{(II)}(A)$. Let $D\in \mathcal {DS}(A)$ and let $y\in d(D)$, that is there exists 
$x\in D$ such that $y=dx$. Since by Proposition \ref{dop-100}, $x\ra y=x\ra dx=1\in D$, it follows that $y\in D$. 
Hence $d(D)\subseteq D$, that is $D$ is $d$-invariant. 
\end{proof}

\begin{cor} \label{dop-190} Let $A$ be a pseudo-BCI algebra and let $d\in \mathcal{IDOP}^{(II)}(A)$. 
Then $d\in \mathcal{RIDOP}^{(II)}(A)$ if and only if every deductive system of $A$ is $d$-invariant. 
\end{cor}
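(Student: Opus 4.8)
The plan is to combine the two implications that have already been established earlier in the excerpt, so that Corollary \ref{dop-190} is essentially a bookkeeping exercise. For the forward direction, assume $d\in\mathcal{RIDOP}^{(II)}(A)$; then Proposition \ref{dop-180} applies verbatim and tells us that every deductive system of $A$ is $d$-invariant. For the converse, assume $d\in\mathcal{IDOP}^{(II)}(A)$ and that every deductive system of $A$ is $d$-invariant; since $\mathcal{IDOP}^{(II)}(A)\subseteq\mathcal{IDOP}^{(I)}(A)\cup\mathcal{IDOP}^{(II)}(A)$, Proposition \ref{dop-170} applies and yields that $d$ is regular, i.e.\ $d1=1$, which together with $d\in\mathcal{IDOP}^{(II)}(A)$ means precisely $d\in\mathcal{RIDOP}^{(II)}(A)$.

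The only subtlety worth spelling out is the membership check: Corollary \ref{dop-190} starts from the weaker hypothesis $d\in\mathcal{IDOP}^{(II)}(A)$ rather than $d\in\mathcal{RIDOP}^{(II)}(A)$, so in the forward direction one must first observe that a regular type II derivation is in particular a type II derivation, so that Proposition \ref{dop-180} is legitimately invoked; and in the converse direction one must recall that $\mathcal{RIDOP}^{(II)}(A)$ is by definition (Definition \ref{dop-80} together with the notation introduced just before Proposition \ref{dop-100}) the set of type II implicative derivation operators $d$ with $d1=1$, so that regularity plus the type II hypothesis is exactly what is needed.

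I do not anticipate any real obstacle here; the statement is a clean corollary and the proof is a two-line application of the two preceding results. The only thing to be careful about is not to re-prove regularity or $d$-invariance from scratch — everything has already been done in Propositions \ref{dop-170} and \ref{dop-180}.

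\begin{proof}
Assume first that $d\in \mathcal{RIDOP}^{(II)}(A)$. In particular $d\in \mathcal{IDOP}^{(II)}(A)$, so by Proposition \ref{dop-180} every deductive system of $A$ is $d$-invariant.
Conversely, assume that $d\in \mathcal{IDOP}^{(II)}(A)$ and that every deductive system of $A$ is $d$-invariant. Since $d\in \mathcal{IDOP}^{(I)}(A)\cup \mathcal{IDOP}^{(II)}(A)$, by Proposition \ref{dop-170} it follows that $d$ is regular, that is $d1=1$. Hence $d\in \mathcal{RIDOP}^{(II)}(A)$.
\end{proof}
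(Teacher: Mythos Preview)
Your proof is correct and is exactly the argument the paper intends: the corollary is stated without proof precisely because it is the immediate combination of Proposition~\ref{dop-170} (for the converse) and Proposition~\ref{dop-180} (for the forward direction). The only superfluous remark is ``In particular $d\in\mathcal{IDOP}^{(II)}(A)$'' in the forward direction, since Proposition~\ref{dop-180} already takes $d\in\mathcal{RIDOP}^{(II)}(A)$ as its hypothesis; but this does no harm.
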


\begin{ex} \label{dop-200} Let $A$ be the pseudo-BCI algebra and its derivations from Example \ref{dop-30-10}. 
We can see that $\mathcal {DS}(A)=\{\{1\},\{c,1\},\{a,b,c,1\},A\}$ and consider $d_2\in \mathcal{IDOP}^{(II)}(A)$, but $d_2\notin \mathcal{RIDOP}^{(II)}(A)$. For $D=\{c,1\}\in \mathcal {DS}(A)$, one can easily check that 
$d_2(D)=\{d\}\nsubseteq D$, hence $D$ is not $d_2$-invariant.  
On the other hand, for $d_1, d_3\subseteq \mathcal{RIDOP}^{(II)}(A)$ we have $d_1(D)\subseteq D$ 
and $d_3(D)\subseteq D$, so that $D$ is $d_1$-invariant and $d_3$-invariant. 
\end{ex} 

\begin{ex} \label{dop-210} Let $A$ be a pseudo-BCI algebra and let $d\in \mathcal{IDOP}^{(II)}(A)$.
According to \cite[Prop. 12]{Dym4}, $\K(A)$ is a closed compatible deductive system of $A$. 
By Proposition \ref{dop-100}, $d(\K(A))\subseteq \K(A)$, hence $\K(A)$ is $d$-invariant. 
\end{ex}

\bigskip

\section{Implicative derivation operators on $p$-semisimple pseudo-BCI algebras}

In this section we investigate the particular case of implicative derivations on the $p$-semisimple 
pseudo-BCI algebras, and we prove that a pseudo-BCI algebra $A$ is $p$-semisimple if and only if the only regular 
derivation of type II on $A$ is the identity map. It is also proved that a pseudo-BCI algebra $A$ is $p$-semisimple 
if and only if the kernel of any regular implicative derivation $d$ of type II on $A$ is the set $\{1\}$. 
As a corollary it is proved that, for any pseudo-BCI algebra $A$, the only regular implicative derivation of 
type II on $A/\K(A)$ is the identity map.
It is also proved that the set of all implicative derivations of a $p$-semisimple pseudo-BCI algebra forms a commutative monoid with respect to function composition.

\begin{theo} \label{pdop-10} If $A$ is a pseudo-BCI algebra, the following are equivalent: \\
$(a)$ $A$ is $p$-semisimple; \\
$(b)$ $\Ker(d)=\{1\}$ for all $d\in \mathcal{RIDOP}^{(II)}(A);$ \\
$(c)$ $\mathcal{RIDOP}^{(II)}(A)=\{\Id_A\}$.
\end{theo}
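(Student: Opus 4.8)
The plan is to prove the cycle $(a)\Rightarrow(c)\Rightarrow(b)\Rightarrow(a)$, using the characterization of regular type II derivations from Theorem \ref{dop-110} and the list of $p$-semisimplicity equivalents in Proposition \ref{psBE-40-30}.

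For $(a)\Rightarrow(c)$, assume $A$ is $p$-semisimple and take any $d\in \mathcal{RIDOP}^{(II)}(A)$. By Proposition \ref{dop-100}$(1)$ we have $x\le dx$ for all $x\in A$, and by Proposition \ref{psBE-40-30}$(b)$ the relation $x\le dx$ forces $x=dx$. Hence $d=\Id_A$; and since $\Id_A\in \mathcal{RIDOP}^{(II)}(A)$ by Example \ref{dop-30}, we get $\mathcal{RIDOP}^{(II)}(A)=\{\Id_A\}$.

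For $(c)\Rightarrow(b)$, if the only regular type II derivation is $\Id_A$, then $\Ker(d)=\Ker(\Id_A)=\{x\in A\mid x=1\}=\{1\}$ for the unique such $d$, which is exactly $(b)$. For $(b)\Rightarrow(a)$, the key observation is that $d_{\varphi}$ (defined by $d_{\varphi}x=\varphi_x$) is a \emph{regular} type II derivation whenever it is a type II derivation at all, since $d_{\varphi}1=\varphi_1=1$ by Lemma \ref{dop-50}$(3)$. So I need $d_{\varphi}\in \mathcal{RIDOP}^{(II)}(A)$; but $p$-semisimple algebras are commutative (stated in the Preliminaries, from \cite{Dym1}), and by Theorem \ref{dop-70} $d_{\varphi}\in \mathcal{IDOP}(A)$ for commutative $A$ — except that here I do not yet know $A$ is $p$-semisimple, which is the very thing I am trying to prove, so I cannot invoke commutativity directly. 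The cleaner route is: assume $(b)$, and suppose for contradiction that $A$ is not $p$-semisimple, so $\K(A)\ne\{1\}$, i.e. there is $x_0\ne 1$ with $x_0\le 1$. Then $\varphi_{x_0}=1$ by Lemma \ref{dop-50}$(3)$. If I can show $d_{\varphi}\in \mathcal{RIDOP}^{(II)}(A)$ without a commutativity hypothesis, then $x_0\in\Ker(d_{\varphi})$, contradicting $(b)$. So the real work is showing $d_{\varphi}$ verifies $(idop_3)$ and $(idop_4)$ for an arbitrary pseudo-BCI algebra — or, using Theorem \ref{dop-110}, showing $\varphi_{x\ra y}=x\ra\varphi_y$ and $\varphi_{x\rs y}=x\rs\varphi_y$ for all $x,y$. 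But the computation in the proof of Proposition \ref{dop-60} already establishes exactly $\varphi_{x\ra y}=x\ra\varphi_y$ and $\varphi_{x\rs y}=x\rs\varphi_y$ for any pseudo-BCI algebra (these identities appear verbatim in that proof), so by Theorem \ref{dop-110} $d_{\varphi}\in \mathcal{RIDOP}^{(II)}(A)$ in full generality.

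With that in hand $(b)\Rightarrow(a)$ is immediate: for every $x\le 1$ we get $\varphi_x=1$, so $x\in\Ker(d_{\varphi})=\{1\}$ by $(b)$, hence $x=1$; thus $\K(A)=\{1\}$ and $A$ is $p$-semisimple. The only subtlety to be careful about is the logical dependency — confirming that the chain of equalities in Proposition \ref{dop-60} together with Theorem \ref{dop-110} genuinely yields $d_{\varphi}\in\mathcal{RIDOP}^{(II)}(A)$ with no commutativity assumed (Remark \ref{dop-90-10} already flags that commutativity is not needed for $d_{\varphi}\in\mathcal{IDOP}(A)$, which covers this). That is the one step I expect to need the most care; everything else is a short implication.
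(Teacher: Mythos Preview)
Your proof is correct and, in the key step $(b)\Rightarrow(a)$, is actually cleaner than the paper's. Both you and the paper use the derivation $d_{\varphi}$ to force $\K(A)\subseteq\Ker(d_{\varphi})$, but the paper justifies $d_{\varphi}\in\mathcal{RIDOP}^{(II)}(A)$ by writing ``since any $p$-semisimple pseudo-BCI algebra is commutative, applying Theorem~\ref{dop-70}\ldots'', which is circular as stated (one is trying to \emph{prove} $p$-semisimplicity). You avoid this by extracting the identities $\varphi_{x\ra y}=x\ra\varphi_y$ and $\varphi_{x\rs y}=x\rs\varphi_y$ directly from the computation in Proposition~\ref{dop-60} and feeding them into the characterization of Theorem~\ref{dop-110}; that argument needs no commutativity hypothesis and closes the gap. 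One small caution: your parenthetical appeal to Remark~\ref{dop-90-10} is weaker than you suggest, since that remark records only a single example rather than a general fact --- but your main line via Proposition~\ref{dop-60} and Theorem~\ref{dop-110} is what actually carries the step, so this does not affect correctness.

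Structurally you run the cycle $(a)\Rightarrow(c)\Rightarrow(b)\Rightarrow(a)$, whereas the paper proves $(a)\Leftrightarrow(b)$ and then $(b)\Leftrightarrow(c)$. Your $(a)\Rightarrow(c)$ is more direct: from $x\le dx$ (Proposition~\ref{dop-100}$(1)$) and Proposition~\ref{psBE-40-30}$(b)$ you get $d=\Id_A$ immediately. The paper instead first uses $(b)\Rightarrow(a)$ to obtain $\At(A)=A$ and then invokes Proposition~\ref{dop-140}$(4)$. Both routes are valid; yours is shorter and self-contained.
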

\begin{proof}
$(a)\Rightarrow (b)$ Let $A$ be a $p$-semisimple pseudo-BCI algebra, that is $\K(A)=\{1\}$, and let 
$d\in \mathcal{RIDOP}^{(II)}(A)$. According to Proposition \ref{dop-100}$(5)$, $\Ker(d) \subseteq \K(A)$, hence $\Ker(d)=\{1\}$. \\
$(b)\Rightarrow (a)$ Assume that $\Ker(d)=\{1\}$ for all $d\in \mathcal{RIDOP}^{(II)}(A)$. 
Consider $d_{\varphi}:A\longrightarrow A$, defined by $d_{\varphi}x=\varphi_x$ for any $x\in A$. 
Since any $p$-semisimple pseudo-BCI algebra is commutative, applying Theorem \ref{dop-70} it follows that 
$d_{\varphi}\in \mathcal{IDOP}^{(II)}(A)$. Taking into consideration that $d_{\varphi}1=1$, we have 
$d_{\varphi}\in \mathcal{RIDOP}^{(II)}(A)$, so that $\Ker(d_{\varphi})=\{1\}$. 
Moreover, for any $x\in \K(A)$, $d_{\varphi}x=(x\ra 1)\rs 1=1$, that is $x\in \Ker(d_{\varphi})$. 
Thus $\K(A)\subseteq \Ker(d_{\varphi})$, and applying Proposition \ref{dop-100}$(5)$ we get 
$\K(A)=\Ker(d_{\varphi})=\{1\}$, hence $A$ is $p$-semisimple. \\
$(b)\Rightarrow (c)$ If $d\in \mathcal{RIDOP}^{(II)}(A)$, then by $(b)$ we have $\Ker(d)=\{1\}$. 
Since $(b)\Rightarrow (a)$ and by Proposition \ref{psBE-40-30}, $\At(A)=A$, applying Proposition \ref{dop-140} 
we get $d=\Id_A$. \\
$(c)\Rightarrow (b)$ Let $d\in \mathcal{RIDOP}^{(II)}(A)$, so by $(c)$ we have $d=\Id_A$, that is $\Ker(d)=\{1\}$. 
\end{proof}

\begin{cor} \label{pdop-10-10}
If $A$ is a pseudo-BCI algebra, then $\mathcal{RIDOP}^{(II)}(\At(A))=\{\Id_A\}$.
\end{cor}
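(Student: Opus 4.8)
The plan is to recognize $\At(A)$ as a $p$-semisimple pseudo-BCI algebra in its own right and then to invoke Theorem \ref{pdop-10}. First I would verify that $\At(A)$ is a subalgebra of $A$: by Lemma \ref{psBE-40-20}$(9)$ the element $1$ is an atom, so $1\in\At(A)$, and Corollary \ref{psBE-40-50} shows that $\At(A)$ is closed under both $\ra$ and $\rs$ (it asserts $x\ra a,\ x\rs a\in\At(A)$ for every $x\in A$, in particular whenever $x\in\At(A)$). Being a substructure of the pseudo-BCI algebra $A$, the structure $(\At(A),\ra,\rs,1)$ then satisfies all the axioms of Definition \ref{psBE-40-10}, hence is itself a pseudo-BCI algebra.

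Next I would observe that the order $``\le"$ on $\At(A)$ is just the restriction of the order on $A$, so any $a\in\At(A)$, being an atom of $A$, is a fortiori an atom of the subalgebra $\At(A)$; thus $\At(\At(A))=\At(A)$. Applying the equivalence $(a)\Leftrightarrow(h)$ of Proposition \ref{psBE-40-30} to the pseudo-BCI algebra $\At(A)$, this identity forces $\At(A)$ to be $p$-semisimple.

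Finally, applying the equivalence $(a)\Leftrightarrow(c)$ of Theorem \ref{pdop-10} to the $p$-semisimple pseudo-BCI algebra $\At(A)$ gives $\mathcal{RIDOP}^{(II)}(\At(A))=\{\Id_{\At(A)}\}$, which is exactly the assertion (the identity map in the statement being understood as $\Id_{\At(A)}$).

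I do not expect a genuine obstacle in this argument; the one step that deserves a moment's care is the verification that restricting to the subalgebra $\At(A)$ neither creates nor destroys atoms, i.e. $\At(\At(A))=\At(A)$. Once that observation is in place, both the closure of $\At(A)$ under the operations and the final conclusion follow immediately from results already established in the paper.
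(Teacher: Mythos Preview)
Your argument is correct and follows the same route as the paper: show that $\At(A)$ is a $p$-semisimple pseudo-BCI subalgebra and then apply Theorem \ref{pdop-10}. The only difference is that the paper cites \cite[Th.~4.13]{Dym2} for the fact that $\At(A)$ is a $p$-semisimple subalgebra, whereas you verify this directly from Lemma \ref{psBE-40-20}$(9)$, Corollary \ref{psBE-40-50}, and Proposition \ref{psBE-40-30}$(h)$; your self-contained verification (including the observation $\At(\At(A))=\At(A)$) is sound.
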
 
\begin{proof}
According to \cite[Th. 4.13]{Dym2}, $\At(A)$ is a $p$-semisimple pseudo-BCI subalgebra of $A$, and by 
Theorem \ref{pdop-10} it follows that $\mathcal{RIDOP}^{(II)}(\At(A))=\{\Id_A\}$.
\end{proof}

\begin{cor} \label{pdop-10-20}
If $A$ is a pseudo-BCI algebra, then $\mathcal{RIDOP}^{(II)}(A/\K(A))=\{\Id_A\}$.
\end{cor}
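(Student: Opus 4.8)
The plan is to deduce Corollary \ref{pdop-10-20} from the already-established Theorem \ref{pdop-10} together with a structural fact about the quotient $A/\K(A)$: namely that it is $p$-semisimple. Once that is in hand, the corollary is immediate — apply Theorem \ref{pdop-10} with the $p$-semisimple algebra $A/\K(A)$ in place of $A$ to conclude $\mathcal{RIDOP}^{(II)}(A/\K(A))=\{\Id\}$ (the identity map of the quotient), which is what the statement asserts.

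So the real content is the claim that $A/\K(A)$ is $p$-semisimple, i.e. that $\K(A/\K(A))=\{[1]\}$. First I would recall (from \cite{Dym4}, as already cited in the Preliminaries and in Example \ref{dop-210}) that $\K(A)$ is a closed compatible deductive system of $A$, hence determines a relative congruence $\theta$ with $A/\theta=A/\K(A)$ a pseudo-BCI algebra, and $[1]_\theta=\K(A)$. Then I would show: if $[x]\le[1]$ in the quotient, i.e. $[x\ra 1]=[x]\ra[1]=[1]$, then $x\ra 1\in\K(A)$; I would then push this back to $x\in\K(A)$, using Lemma \ref{psBE-40-20}(9),(11) or the branch description — if $x\ra 1=x\rs 1\le 1$ then $1\le (x\ra 1)\rs 1=\varphi_x$, giving $\varphi_x=1$ by Lemma \ref{psBE-40-20}(9), and since $x\le\varphi_x=1$ we get $x\in\K(A)$, hence $[x]=[1]$. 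This is exactly the statement that $\K(A/\K(A))=\{[1]\}$, i.e. $A/\K(A)$ is $p$-semisimple.

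The main obstacle is making the quotient argument fully rigorous: one must be careful that the congruence class of $1$ is precisely $\K(A)$ and that the order on the quotient is the one induced by $\ra$ in the quotient, so that "$[x]\le[1]$" really does unwind to "$x\ra 1\in\K(A)$". This is standard for pseudo-BCI quotients by closed compatible deductive systems and is covered by \cite{Dym4}; I would simply cite it rather than reprove the congruence machinery. A slightly cleaner alternative, if one prefers to avoid quotient bookkeeping entirely, is to observe that for $p$-semisimplicity of a pseudo-BCI algebra $B$ it suffices that $\K(B)=\{1_B\}$, and $\K(A/\K(A))$ consists exactly of the classes $[x]$ with $x\ra 1\in\K(A)$, which by the computation above forces $x\in\K(A)$; either way the proof reduces to the one-line observation that $x\ra 1\in\K(A)\Rightarrow x\in\K(A)$ and then an invocation of Theorem \ref{pdop-10}.

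\begin{proof}
By \cite{Dym4}, $\K(A)$ is a closed compatible deductive system of $A$, so it induces a relative congruence $\theta$ such that $A/\K(A)=A/\theta$ is a pseudo-BCI algebra with $[1]_\theta=\K(A)$. We claim $A/\K(A)$ is $p$-semisimple, i.e. $\K(A/\K(A))=\{[1]_\theta\}$. Suppose $[x]_\theta\le [1]_\theta$; then $[x\ra 1]_\theta=[x]_\theta\ra[1]_\theta=[1]_\theta$, hence $x\ra 1\in\K(A)$, that is $x\ra 1\le 1$. By Lemma \ref{psBE-40-20}$(11)$, $x\rs 1=x\ra 1\le 1$, so $1\le(x\ra 1)\rs 1=\varphi_x$, and by Lemma \ref{psBE-40-20}$(9)$ we get $\varphi_x=1$. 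Since $x\le\varphi_x=1$, it follows that $x\in\K(A)$, hence $[x]_\theta=[1]_\theta$. Thus $A/\K(A)$ is $p$-semisimple, and applying Theorem \ref{pdop-10} to it we obtain $\mathcal{RIDOP}^{(II)}(A/\K(A))=\{\Id_A\}$.
\end{proof}
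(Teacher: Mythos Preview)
Your proof is correct and follows the same approach as the paper: show that $A/\K(A)$ is $p$-semisimple and then invoke Theorem \ref{pdop-10}. The only difference is that the paper cites \cite[Th.~6]{Dym4} directly for the $p$-semisimplicity of $A/\K(A)$, whereas you supply an explicit argument for it; your computation that $x\ra 1\in\K(A)$ forces $\varphi_x=1$ and hence $x\in\K(A)$ is fine and makes the proof self-contained.
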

\begin{proof}
According to \cite[Prop.12, Th. 6]{Dym4}, $\K(A)$ is a closed compatible deductive system of $A$ and $A/\K(A)$ is a $p$-semisimple pseudo-BCI algebra. Applying Theorem \ref{pdop-10}, it follows that \\ $\mathcal{RIDOP}^{(II)}(A/\K(A))=\{\Id_A\}$. 
\end{proof}

\begin{ex} \label{pdop-10-30} 
Consider the structure $(A,\ra,\rs,1)$, where the operations $\ra$ and $\rs$ on $A=\{a,b,c,d,e,1\}$ 
are defined as follows:
\[
\hspace{10mm}
\begin{array}{c|cccccc}
\rightarrow & a & b & c & d & e & 1 \\ \hline
a & 1 & d & e & b & c & a \\
b & c & 1 & a & e & d & b \\
c & e & a & 1 & c & b & d \\
d & b & e & d & 1 & a & c \\
e & d & c & b & a & 1 & e \\
1 & a & b & c & d & e & 1 
\end{array}
\hspace{10mm}
\begin{array}{c|cccccc}
\rightsquigarrow & a & b & c & d & e & 1 \\ \hline
a & 1 & c & b & e & d & a \\
b & d & 1 & e & a & c & b \\
c & b & e & 1 & c & a & d \\
d & e & a & d & 1 & b & c \\
e & c & d & a & b & 1 & e \\
1 & a & b & c & d & e & 1 \end{array}
.
\]
Then $(A,\ra,\rs,1)$ is a $p$-semisimple pseudo-BCI algebra (\cite{Dym4}), and one can check that: \\
$(1)$ $\At(A)=A$, $\K(A)=\{1\}$ and $A/\K(A)=A;$  \\
$(2)$ $\mathcal{RIDOP}^{(II)}(A)=\mathcal{RIDOP}^{(II)}(\At(A))=\mathcal{RIDOP}^{(II)}(A/\K(A))=\{\Id_A\}$.
\end{ex}

\begin{ex} \label{pdop-10-40} Let $A$ be the pseudo-BCI algebra and its derivations from Example \ref{dop-30-10}. 
One can easily check that: \\ 
$(1)$ $\At(A)=\{d, 1\};$ \\
$(2)$ $\At(A)$ is a $p$-semisimple subalgebra of $A;$ \\
$(3)$ $\mathcal{RIDOP}^{(II)}(\At(A))=Id_{\At(A)}$.
\end{ex}

\begin{rem} \label{pdop-100} The notion of a \emph{$\ra$medial pseudo-BCI algebra} was defined in \cite{Jun2} 
as a pseudo-BCI algebra $A$ satisfying the identity $(u\rs v)\ra (x\rs y)=(u\rs x)\ra (v\rs y)$, for all 
$x, y, u, v\in A$. It was proved in \cite{Dym2} that a $\ra$medial pseudo-BCI algebra is a $p$-semisimple BCI-algebra, 
so that by Theorem \ref{pdop-10}, $\mathcal{RIDOP}^{(II)}(A)=\{\Id_A\}$. 
Similarly for the case of a \emph{$\rs$medial pseudo-BCI algebra} which is a pseudo-BCI algebra $A$ satisfying 
the identity $(u\ra v)\rs (x\ra y)=(u\ra x)\rs (v\ra y)$, for all $x, y, u, v\in A$ (\cite{Lee1}).
\end{rem}

\begin{prop} \label{dop-300} Let $A$ be a $p$-semisimple pseudo-BCI algebra and let 
$d_1, d_2\in \mathcal{IDOP}^{(I)}(A)$. Then $d_1\circ d_2\in \mathcal{IDOP}^{(I)}(A)$. 
\end{prop}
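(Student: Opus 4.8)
The plan is to show that $d_1\circ d_2$ satisfies the two defining identities $(idop_1)$ and $(idop_2)$ of a type I implicative derivation operator. I will write $d=d_1\circ d_2$ and work with $d(x\ra y)$ directly, expanding the inner $d_2$ first and then applying $d_1$ to the result. The essential simplification comes from working in a $p$-semisimple algebra: by Proposition \ref{psBE-40-30}, $A=\At(A)$, so every element is an atom and the operations $\Cup_1,\Cup_2$ collapse, since $x\Cup_1 y=(x\ra y)\rs y=x$ and $x\Cup_2 y=(x\rs y)\ra y=x$ for all $x,y$. Consequently the identity $(idop_1)$ for a map $e$ reduces to $e(x\ra y)=x\ra e(y)$ after canceling the right-hand factor — more precisely, $(x\ra e(y))\Cup_2(e(x)\ra y)=x\ra e(y)$ automatically, so $(idop_1)$ becomes simply $e(x\ra y)=x\ra e(y)$, and similarly $(idop_2)$ becomes $e(x\rs y)=x\rs e(y)$. (Note this matches the flavour of Theorem \ref{dop-110}, though here we do not assume regularity.)

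The first concrete step is therefore to record that on a $p$-semisimple algebra, $d_1,d_2\in\mathcal{IDOP}^{(I)}(A)$ means precisely $d_i(x\ra y)=x\ra d_i(y)$ and $d_i(x\rs y)=x\rs d_i(y)$ for $i=1,2$ and all $x,y\in A$. This can be extracted from Definition \ref{dop-10} using the $p$-semisimple collapse above; alternatively one may invoke Proposition \ref{dop-110-20} after noting $\Img(d_i)\subseteq A=\At(A)$ trivially, which gives exactly these equalities. With this in hand, the composition computation is immediate: for all $x,y\in A$,
\[
(d_1\circ d_2)(x\ra y)=d_1\bigl(d_2(x\ra y)\bigr)=d_1\bigl(x\ra d_2(y)\bigr)=x\ra d_1(d_2(y))=x\ra (d_1\circ d_2)(y),
\]
and symmetrically $(d_1\circ d_2)(x\rs y)=x\rs(d_1\circ d_2)(y)$.

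The last step is to convert these two equalities back into the form $(idop_1)$ and $(idop_2)$. Since $A$ is $p$-semisimple, $x\ra(d_1\circ d_2)(y)\in\At(A)$, and by Lemma \ref{psBE-110} together with the $p$-semisimple property, $\bigl(x\ra (d_1\circ d_2)y\bigr)\Cup_2\bigl((d_1\circ d_2)x\ra y\bigr)=x\ra(d_1\circ d_2)y$; likewise for $\Cup_1$. Substituting, we obtain $(d_1\circ d_2)(x\ra y)=\bigl(x\ra (d_1\circ d_2)y\bigr)\Cup_2\bigl((d_1\circ d_2)x\ra y\bigr)$ and the analogous $\rs$ identity, which are exactly $(idop_1)$ and $(idop_2)$ for $d_1\circ d_2$. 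Hence $d_1\circ d_2\in\mathcal{IDOP}^{(I)}(A)$.

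There is no real obstacle here; the only point requiring a little care is the reduction from the two-sided $\Cup$-form of the type I axioms to the one-sided implication form, and the verification that this reduction is valid for \emph{arbitrary} (not necessarily regular) $d_1,d_2$ on a $p$-semisimple algebra — this rests solely on every element being an atom, so that $(idop_1)$ forces the image of $x\ra y$ to be an atom and thereby the cancellation of the outer $\Cup_2(\cdot)$ is automatic. Once that is observed, the proof is a two-line substitution.
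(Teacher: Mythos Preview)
Your proof is correct and follows essentially the same approach as the paper: both rely on the $p$-semisimple collapse $u\Cup_1 v=u\Cup_2 v=u$ from Proposition~\ref{psBE-40-30}$(c)$, which reduces $(idop_1)$ and $(idop_2)$ to the one-sided identities $d(x\ra y)=x\ra dy$ and $d(x\rs y)=x\rs dy$, after which closure under composition is immediate. The paper performs the same computation inline rather than isolating the reduction as a separate observation, but the content is identical.
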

\begin{proof}
According to Proposition \ref{psBE-40-30}$(c)$, $x\Cup_1 y=x\Cup_2 y=x$ for all $x, y\in A$.\\  
If $d_1, d_2\in \mathcal{IDOP}^{(I)}(A)$ we have: \\
$\hspace*{2cm}$   $(d_1\circ d_2)(x\ra y)=d_1d_2(x\ra y)=d_1((x\ra d_2y)\Cup_2 (d_2x\ra y))$ \\
$\hspace*{4.8cm}$ $=d_1(x\ra d_2y)=(x\ra d_1d_2y)\Cup_2 (d_1x\ra d_2y)$ \\
$\hspace*{4.8cm}$ $=x\ra d_1d_2y=(x\ra d_1d_2y)\Cup_2 (d_1d_2x\ra y)$ \\
$\hspace*{4.8cm}$ $=(x\ra (d_1\circ d_2)(y))\Cup_2 ((d_1\circ d_2)(x)\ra y)$. \\
Similarly, $(d_1\circ d_2)(x\rs y)=(x\rs (d_1\circ d_2)(y))\Cup_1 ((d_1\circ d_2)(x)\rs y)$, 
hence $d_1\circ d_2\in \mathcal{IDOP}^{(I)}(A)$. 
\end{proof}

\begin{prop} \label{dop-310} Let $A$ be a $p$-semisimple pseudo-BCI algebra and let 
$d_1, d_2\in \mathcal{IDOP}^{(II)}(A)$. Then $d_1\circ d_2\in \mathcal{IDOP}^{(II)}(A)$. 
\end{prop}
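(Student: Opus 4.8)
The plan is to follow the template of the proof of Proposition \ref{dop-300}, now exploiting that $p$-semisimplicity collapses the operations $\Cup_1$ and $\Cup_2$ to their first argument. First I would invoke Proposition \ref{psBE-40-30}$(c)$ to record that $u\Cup_1 v=u\Cup_2 v=u$ for all $u,v\in A$ when $A$ is $p$-semisimple. Substituting this into $(idop_3)$ and $(idop_4)$ shows that every $d\in\mathcal{IDOP}^{(II)}(A)$ on such an $A$ in fact satisfies $d(x\ra y)=dx\ra y$ and $d(x\rs y)=dx\rs y$ for all $x,y\in A$; this is the only preparatory step.

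Next I would compute $d_1\circ d_2$ on an arbitrary product. Unwinding $d_2(x\ra y)=(d_2x\ra y)\Cup_2(x\ra d_2y)=d_2x\ra y$ and then applying the simplified identity to $d_1$ yields
\[
(d_1\circ d_2)(x\ra y)=d_1\bigl(d_2x\ra y\bigr)=(d_1d_2x\ra y)\Cup_2(d_2x\ra d_1y)=d_1d_2x\ra y .
\]
Since $(d_1\circ d_2)(z)=d_1d_2z$, re-expanding this last term via the collapsed $\Cup_2$ as $d_1d_2x\ra y=(d_1d_2x\ra y)\Cup_2(x\ra d_1d_2y)$ gives back exactly $(idop_3)$ for $d_1\circ d_2$. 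The identity $(idop_4)$ follows from the verbatim computation with $\rs$ in place of $\ra$ and $\Cup_1$ in place of $\Cup_2$.

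There is no real obstacle: every equality above is either a definitional unwinding or an instance of Proposition \ref{psBE-40-30}$(c)$. The only point that needs care is bookkeeping --- at each stage one must verify that the bracketed term is genuinely of the form $u\Cup_2 v$ (respectively $u\Cup_1 v$) so the collapse identity applies, and that the two implications $\ra,\rs$ and the two operations $\Cup_1,\Cup_2$ are kept consistent throughout. I therefore expect the final write-up to consist of two short display chains, one for each of $(idop_3)$ and $(idop_4)$, closely parallel to the proof of Proposition \ref{dop-300}.
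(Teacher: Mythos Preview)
Your proposal is correct and follows essentially the same approach as the paper: both use Proposition \ref{psBE-40-30}$(c)$ to collapse $\Cup_1,\Cup_2$ to their first argument, then run the chain $(d_1\circ d_2)(x\ra y)=d_1((d_2x\ra y)\Cup_2(x\ra d_2y))=d_1(d_2x\ra y)=(d_1d_2x\ra y)\Cup_2(d_2x\ra d_1y)=d_1d_2x\ra y=(d_1d_2x\ra y)\Cup_2(x\ra d_1d_2y)$, and similarly for $\rs$. Your write-up is slightly more explicit about the preparatory observation that each $d\in\mathcal{IDOP}^{(II)}(A)$ satisfies $d(x\ra y)=dx\ra y$, but the computations are identical.
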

\begin{proof}
Let $d_1, d_2\in \mathcal{IDOP}^{(II)}(A)$. Similar to Proposition \ref{dop-300} we get: \\
$\hspace*{2cm}$   $(d_1\circ d_2)(x\ra y)=d_1d_2(x\ra y)=d_1((d_2x\ra y)\Cup_2 (x\ra d_2y))$ \\
$\hspace*{4.8cm}$ $=d_1(d_2x\ra y)=(d_1d_2x\ra y)\Cup_2 (d_2x\ra d_1y)$ \\
$\hspace*{4.8cm}$ $=d_1d_2x\ra y=(d_1d_2x\ra y)\Cup_2 (x\ra d_1d_2y)$ \\
$\hspace*{4.8cm}$ $=((d_1\circ d_2)(x)\ra y)\Cup_2 (x\ra (d_1\circ d_2)(y))$. \\
Similarly, $(d_1\circ d_2)(x\rs y)=((d_1\circ d_2)(x)\rs y)\Cup_1 (x\rs (d_1\circ d_2)(y))$, 
thus $d_1\circ d_2\in \mathcal{IDOP}^{(II)}(A)$. 
\end{proof}

\begin{prop} \label{dop-320} Let $A$ be a $p$-semisimple pseudo-BCI algebra and let 
$d_1, d_2\in \mathcal{IDOP}(A)$. Then $d_1\circ d_2=d_2\circ d_1$. 
\end{prop}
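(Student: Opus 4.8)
The plan is to use the group-theoretic description of a $p$-semisimple pseudo-BCI algebra from Proposition \ref{psBE-40-30}. Since $A$ is $p$-semisimple, Proposition \ref{psBE-40-30}$(h)$ gives $\At(A)=A$, and by Proposition \ref{psBE-40-30}$(i)$ the structure $(A,\cdot,^{-1},1)$ with $x\cdot y=(x\ra 1)\rs y$ and $x^{-1}=x\ra 1=x\rs 1$ is a group. Every element of $A$ is thus an atom, which is exactly what lets us apply the atom-wise formulas for derivations globally.

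First I would write each $d_i$ in closed form. Because $d_1,d_2\in \mathcal{IDOP}(A)=\mathcal{IDOP}^{(I)}(A)\cap\mathcal{IDOP}^{(II)}(A)$ and $\At(A)=A$, Lemma \ref{dop-130}$(2)$ (using that $d_i$ is of type I) gives $d_ix=x\cdot d_i1$ for every $x\in A$, while Lemma \ref{dop-130-10}$(2)$ (using that $d_i$ is of type II) gives $d_ix=d_i1\cdot x$ for every $x\in A$. Equating the two expressions yields $x\cdot d_i1=d_i1\cdot x$ for all $x\in A$ and $i\in\{1,2\}$; in particular $d_11\cdot d_21=d_21\cdot d_11$, i.e.\ $d_11$ and $d_21$ commute in the group.

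The computation is then immediate. Iterating the type I form (valid since $d_2x\in A=\At(A)$) and using associativity of $\cdot$,
\[
(d_1\circ d_2)(x)=d_1(d_2x)=d_1(x\cdot d_21)=(x\cdot d_21)\cdot d_11=x\cdot(d_21\cdot d_11),
\]
and symmetrically $(d_2\circ d_1)(x)=x\cdot(d_11\cdot d_21)$. Since $d_11\cdot d_21=d_21\cdot d_11$, the right-hand sides agree, so $(d_1\circ d_2)(x)=(d_2\circ d_1)(x)$ for all $x\in A$, which is the claim. Note that this argument does not even use the closure results of Propositions \ref{dop-300} and \ref{dop-310}.

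There is no substantial obstacle here. The only points that require attention are that the atom-wise identities of Lemma \ref{dop-130}$(2)$ and Lemma \ref{dop-130-10}$(2)$ may be invoked for every element precisely because $p$-semisimplicity forces $\At(A)=A$, and that $d_2x$ lies again in $\At(A)$ so the formula for $d_1$ can be composed; beyond that it is a one-line group calculation.
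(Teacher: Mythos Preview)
Your argument is correct. The key observations — that $p$-semisimplicity gives $\At(A)=A$, so Lemmas \ref{dop-130}$(2)$ and \ref{dop-130-10}$(2)$ apply globally, and that comparing the two formulas forces each $d_i1$ to be central in the group $(A,\cdot)$ — are sound, and the final computation is immediate.

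The paper proceeds differently. Rather than passing to the group structure, it writes $x=1\ra x$ and exploits the collapse $u\Cup_1 v=u\Cup_2 v=u$ valid in the $p$-semisimple case (Proposition \ref{psBE-40-30}$(c)$). Applying first the type I identity for $d_2$ and then the type II identity for $d_1$ gives $(d_1\circ d_2)(x)=d_11\ra d_2x$; applying first the type II identity for $d_1$ and then the type I identity for $d_2$ gives $(d_2\circ d_1)(x)=d_11\ra d_2x$ as well, so the two compositions coincide by direct inspection. Your route is more structural — it explains the commutation as centrality of $d_i1$ in the underlying group and would generalize immediately to any finite composition — whereas the paper's route stays entirely within the $\ra,\rs,\Cup_i$ calculus and avoids invoking Lemmas \ref{dop-130} and \ref{dop-130-10} or the group description at all.
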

\begin{proof}
Let $d_1, d_2\in \mathcal{IDOP}(A)$. Using the definitions of type I and type II derivations we get: \\
$\hspace*{2cm}$   $(d_1\circ d_2)(x)=(d_1\circ d_2)(1\ra x)=d_1((1\ra d_2x)\Cup_2 (d_21\ra x))$ \\
$\hspace*{4cm}$   $=d_1(1\ra d_2x)=(d_11\ra d_2x)\Cup_2 (1\ra d_1d_2x)=d_11\ra d_2x$. \\
$\hspace*{2cm}$   $(d_2\circ d_1)(x)=(d_2\circ d_1)(1\ra x)=d_2((d_11\ra x)\Cup_2 (1\ra d_1x))$ \\
$\hspace*{4cm}$   $=d_2(d_11\ra x)=(d_11\ra d_2x)\Cup_2 (d_2d_11\ra x)=d_11\ra d_2x$. \\
Hence $(d_1\circ d_2)(x)=(d_2\circ d_1)(x)$ for all $x\in A$. It follows that $d_1\circ d_2=d_2\circ d_1$. 
\end{proof}

\begin{theo} \label{dop-330} If $A$ is a $p$-semisimple pseudo-BCI algebra then 
$(\mathcal{IDOP}(A), \circ, \Id_A)$ is a commutative monoid.  
\end{theo}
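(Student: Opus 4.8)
The plan is to verify the three monoid axioms for $(\mathcal{IDOP}(A), \circ, \Id_A)$ when $A$ is $p$-semisimple: closure under $\circ$, existence of an identity, and associativity, and then add commutativity on top.

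First I would establish \textbf{closure}. Recall $\mathcal{IDOP}(A)=\mathcal{IDOP}^{(I)}(A)\cap\mathcal{IDOP}^{(II)}(A)$. Given $d_1,d_2\in\mathcal{IDOP}(A)$, both lie in $\mathcal{IDOP}^{(I)}(A)$, so by Proposition \ref{dop-300} the composite $d_1\circ d_2\in\mathcal{IDOP}^{(I)}(A)$; and both lie in $\mathcal{IDOP}^{(II)}(A)$, so by Proposition \ref{dop-310} the composite $d_1\circ d_2\in\mathcal{IDOP}^{(II)}(A)$. Hence $d_1\circ d_2\in\mathcal{IDOP}(A)$, giving closure.

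Next, the \textbf{identity}: Example \ref{dop-30} already records $\Id_A\in\mathcal{IDOP}(A)$, and $\Id_A\circ d=d\circ\Id_A=d$ for every map $d$, so $\Id_A$ is a two-sided unit. \textbf{Associativity} of $\circ$ is automatic since function composition is always associative on the full set of self-maps of $A$, and $\mathcal{IDOP}(A)$ is a subset closed under $\circ$. Finally, \textbf{commutativity} is exactly Proposition \ref{dop-320}: for $d_1,d_2\in\mathcal{IDOP}(A)$ we have $d_1\circ d_2=d_2\circ d_1$. Assembling these four facts yields that $(\mathcal{IDOP}(A),\circ,\Id_A)$ is a commutative monoid.

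I do not anticipate a genuine obstacle here: the theorem is a bookkeeping corollary of Propositions \ref{dop-300}, \ref{dop-310}, \ref{dop-320} and Example \ref{dop-30}. If anything, the only point deserving a word of care is that one must intersect the type I and type II closure results to stay inside $\mathcal{IDOP}(A)$ — applying just one of Proposition \ref{dop-300} or \ref{dop-310} would not suffice — but since $\mathcal{IDOP}(A)$ is by definition the intersection, this is immediate.
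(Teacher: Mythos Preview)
Your proposal is correct and follows essentially the same approach as the paper, which simply cites Propositions \ref{dop-300}, \ref{dop-310}, \ref{dop-320} together with associativity of composition. Your version is slightly more explicit in invoking Example \ref{dop-30} for the identity and in noting that closure requires intersecting the type I and type II results, but the substance is identical.
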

\begin{proof}
It follows by Propositions \ref{dop-300}, \ref{dop-310}, \ref{dop-320} taking into consideration that the 
composition of functions is always associative. 
\end{proof}

\begin{rem} \label{dop-340} In Theorem \ref{dop-330} the pseudo-BCI algebra $A$ need not be $p$-semisimple. 
Indeed, consider the pseudo-BCI algebra $A$ and its derivations from Example \ref{dop-30-10}. 
Since $a\le b$ and $a\neq b$, the condition $(b)$ from Proposition \ref{psBE-40-30} is not satisfied, so $A$ 
is not $p$-semisimple. On the other hand we have $d_1\circ d_2=d_2\circ d_1=d_2$, $d_1\circ d_3=d_3\circ d_1=d_3$, 
$d_2\circ d_3=d_3\circ d_2=d_2$, hence $(\mathcal{IDOP}(A), \circ, \Id_A)$ is a commutative monoid.    
\end{rem}

\begin{prop} \label{dop-350} Let $A$ be a $p$-semisimple pseudo-BCI algebra and let 
$d_1, d_2\in \mathcal{IDOP}(A)$. Define $(d_1\ra d_2)(x)=d_1x\ra d_2x$ and $(d_1\rs d_2)(x)=d_1x\rs d_2x$, 
for all $x\in A$. Then $d_1\ra d_2=d_2\ra d_1$ and $d_1\rs d_2=d_2\rs d_1$. 
\end{prop}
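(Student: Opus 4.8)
The plan is to pass to the group structure that a $p$-semisimple pseudo-BCI algebra carries. By Proposition \ref{psBE-40-30}, since $A$ is $p$-semisimple we have $A=\At(A)$ and $(A,\cdot,^{-1},1)$ is a group with $x\ra y=y\cdot x^{-1}$, $x\rs y=x^{-1}\cdot y$ and $x^{-1}=x\ra 1=x\rs 1$; moreover $(x\ra y)\rs y=(x\rs y)\ra y=x$, i.e. $u\Cup_1 v=u\Cup_2 v=u$ for all $u,v\in A$, so the operators $\Cup_1,\Cup_2$ are trivial. Hence for $d\in\mathcal{IDOP}(A)$ the axioms $(idop_1)$, $(idop_3)$ collapse to $d(x\ra y)=x\ra dy=dx\ra y$, and $(idop_2)$, $(idop_4)$ to $d(x\rs y)=x\rs dy=dx\rs y$.

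First I would pin down the form of an arbitrary $d\in\mathcal{IDOP}(A)$. Since every element of $A$ is an atom, Lemma \ref{dop-130}$(2)$ gives $dx=(x\ra 1)\ra d1=(x\ra 1)\rs d1=x\cdot d1$ for all $x\in A$; translating the expression $(x\ra 1)\ra d1$ into the group shows it equals $d1\cdot x$, so $d1$ is central. Lemma \ref{dop-130-10}$(2)$ gives $dx=d1\ra x$, which in the group is $x\cdot(d1)^{-1}$; comparing with $dx=x\cdot d1$ forces $(d1)^{-1}=d1$. Writing $e_i:=d_i1$ for $i=1,2$, we therefore have: $e_i$ is central, $e_i^{-1}=e_i$, $e_1e_2=e_2e_1$, and $d_ix=e_ix=xe_i$ for all $x\in A$.

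The conclusion is then a short computation in the group. For every $x\in A$,
\[
(d_1\ra d_2)(x)=d_1x\ra d_2x=d_2x\cdot(d_1x)^{-1}=e_2e_1^{-1}=e_1e_2,
\]
where the factors $x$ cancel because $(e_1x)^{-1}=x^{-1}e_1^{-1}$, and the last step uses $e_1^{-1}=e_1$ together with $e_1e_2=e_2e_1$. As $e_1e_2$ is symmetric in the indices, $(d_1\ra d_2)(x)=(d_2\ra d_1)(x)$. The computation for $\rs$ is the same, using $u\rs v=u^{-1}\cdot v$ and then conjugating by $x$ to move the central element $e_1e_2$ past it: $(d_1\rs d_2)(x)=(e_1x)^{-1}(e_2x)=x^{-1}(e_1e_2)x=e_1e_2=(d_2\rs d_1)(x)$. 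Since $x$ was arbitrary, $d_1\ra d_2=d_2\ra d_1$ and $d_1\rs d_2=d_2\rs d_1$.

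The only substantive point is the structural fact in the middle paragraph: that $d1$ must be a central element of order dividing $2$ for every $d\in\mathcal{IDOP}(A)$. This is exactly where both halves of the hypothesis $d\in\mathcal{IDOP}(A)=\mathcal{IDOP}^{(I)}(A)\cap\mathcal{IDOP}^{(II)}(A)$ are genuinely used — Lemma \ref{dop-130}$(2)$ (type I) supplies $dx=x\cdot d1$ (and, read in the group, centrality of $d1$), while Lemma \ref{dop-130-10}$(2)$ (type II) supplies $dx=d1\ra x=x\cdot(d1)^{-1}$, and only their conjunction yields $(d1)^2=1$. Everything afterwards is routine group arithmetic, and no hypothesis on $A$ beyond $p$-semisimplicity is needed.
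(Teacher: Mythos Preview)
Your proof is correct, but it follows a genuinely different route from the paper's.

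The paper never passes to the group structure. Instead it exploits the identity $1=x\ra x$ and computes $(d_1\circ d_2)(1)$ in two ways, using that $u\Cup_1 v=u\Cup_2 v=u$ in the $p$-semisimple case. Applying $(idop_1)$ to $d_2$ and then $(idop_3)$ to $d_1$ gives
\[
(d_1\circ d_2)(1)=d_1(x\ra d_2x)=d_1x\ra d_2x,
\]
while applying $(idop_3)$ to $d_2$ and then $(idop_1)$ to $d_1$ gives
\[
(d_1\circ d_2)(1)=d_1(d_2x\ra x)=d_2x\ra d_1x.
\]
Hence $(d_1\ra d_2)(x)=(d_2\ra d_1)(x)$ for every $x$, and the $\rs$ case is analogous.

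Your approach yields strictly more: you actually classify the elements of $\mathcal{IDOP}(A)$ as the maps $x\mapsto e\cdot x$ with $e=d1$ a central involution of the associated group, and then the commutativity of $d_1\ra d_2$ and $d_1\rs d_2$ drops out of elementary group arithmetic (both sides equal the constant $e_1e_2$). The paper's argument is shorter and avoids invoking Lemmas \ref{dop-130} and \ref{dop-130-10}, but it gives less structural information; your argument explains \emph{why} the result holds by making the derivations explicit. Both methods use the full hypothesis $d_i\in\mathcal{IDOP}^{(I)}(A)\cap\mathcal{IDOP}^{(II)}(A)$ in an essential way --- the paper by alternating the two axiom types in the double evaluation of $(d_1\circ d_2)(1)$, and you by combining $dx=x\cdot d1$ (type I) with $dx=x\cdot(d1)^{-1}$ (type II) to force $(d1)^2=1$.
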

\begin{proof}
Let $d_1, d_2\in \mathcal{IDOP}(A)$. We use again the fact that in any $p$-semisimple pseudo-BCI algebra $A$ 
we have $x\Cup_1 y=x\Cup_2 y=x$ for all $x, y\in A$. Applying the definitions of type I and type II 
derivations we get: \\
$\hspace*{2cm}$   $(d_1\circ d_2)(1)=(d_1\circ d_2)(x\ra x)=d_1((x\ra d_2x)\Cup_2 (d_2x\ra x))$ \\
$\hspace*{4cm}$   $=d_1(x\ra d_2x)=(d_1x\ra d_2x)\Cup_2 (x\ra d_1d_2x)=d_1x\ra d_2x$. \\
$\hspace*{2cm}$   $(d_1\circ d_2)(x)=(d_1\circ d_2)(x\ra x)=d_1((d_2x\ra x)\Cup_2 (x\ra d_2x))$ \\
$\hspace*{4cm}$   $=d_1(d_2x\ra x)=(d_2x\ra d_1x)\Cup_2 (d_1d_2x\ra x)=d_2x\ra d_1x$. \\
Hence $(d_1\ra d_2)(x)=(d_2\ra d_1)(x)=(d_1\circ d_2)(x)$ for all $x\in A$. 
It follows that $d_1\ra d_2=d_2\ra d_1$. Similarly, $d_1\rs d_2=d_2\rs d_1$. 
\end{proof}

\bigskip

\section{Symmetric derivation operators on pseudo-BCI algebras}

In this section we give a generalization of the concept of \emph{left derivation} introduced in \cite{Abuj2} for BCI-algebras. Two types of symmetric derivations on pseudo-BCI algebras are defined and studied, and the 
relationship between implicative and symmetric derivations is investigated. 
We prove that for the case of a $p$-semisimple pseudo-BCI algebra the sets of type II implicative derivations 
and type II symmetric derivations are equal, while for a $p$-semisimple BCI-algebra this result is also valid for 
the sets of type I implicative derivations and type I symmetric derivations. Finally, we show that a type I or type II symmetric derivation $d$ on a pseudo-BCI algebra $A$ is regular if and only if every deductive system of $A$ is $d$-invariant. 

\begin{Def} \label{sdop-10} Let $(A,\ra,\rs,1)$ be a pseudo-BCI algebra. A mapping $d:A\longrightarrow A$ is called a  
\emph{symmetric derivation operator of type I} or a \emph{type I symmetric derivation operator} or a 
\emph{type I symmetric derivation} on $A$ if it satisfies the following conditions for all $x, y\in A:$ \\
$(sdop_1)$ $d(x\ra y)=(x\ra d(y))\Cup_2 (y\ra d(x))$. \\
$(sdop_2)$ $d(x\rs y)=(x\rs d(y))\Cup_1 (y\rs d(x))$. 
\end{Def}

\begin{Def} \label{sdop-20} Let $(A,\ra,\rs,1)$ be a pseudo-BCI algebra. A mapping $d:A\longrightarrow A$ is 
called a \emph{symmetric derivation operator of type II} or a \emph{type II symmetric derivation operator} 
or a \emph{type II symmetric derivation} on $A$ if it satisfies the following conditions for all $x, y\in A:$ \\
$(sdop_3)$ $d(x\ra y)=(d(x)\ra y)\Cup_2 (d(y)\ra x)$. \\
$(sdop_4)$ $d(x\rs y)=(d(x)\rs y)\Cup_1 (d(y)\rs x)$. 
\end{Def}

Let $A$ be a pseudo-BCI algebra. Denote: \\
$\hspace*{2cm}$ $\mathcal{SDOP}^{(I)}(A)$ the set of all symmetric derivation operators of type I on $A$, \\ 
$\hspace*{2cm}$ $\mathcal{SDOP}^{(II)}(A)$ the set of all symmetric derivation operators of type II on $A$, \\ 
$\hspace*{2cm}$ $\mathcal{SDOP}(A)=\mathcal{SDOP}^{(I)}(A)\cap \mathcal{SDOP}^{(II)}(A)$. \\ 
A map $d\in \mathcal{SDOP}(A)$ is called a \emph{symmetric derivation} on $A$. \\
In what follows we will denote $dx$ instead of $d(x)$. 
Similar to the case of implicative derivations we can define the regular symmetric derivations on pseudo-BCI algebras. 
Denote by $\mathcal{RSDOP}^{(I)}(A)$, $\mathcal{RSDOP}^{(II)}(A)$ and $\mathcal{RSDOP}(A)$ 
the set of all regular symmetric derivations from $\mathcal{SDOP}^{(I)}(A)$, $\mathcal{SDOP}^{(II)}(A)$ and 
$\mathcal{SDOP}(A)$, respectively. 

\begin{prop} \label{sdop-30} Let $(A,\ra,\rs,1)$ be a pseudo-BCI algebra and let $d\in \mathcal{SDOP}^{(I)}(A)$. 
The following hold for all $x, y\in A:$ \\
$(1)$ $d1=x\ra dx=x\rs dx;$ \\
$(2)$ $dx=dx\Cup_1 d1=dx\Cup_2 d1;$ \\
If $d$ is regular, then: \\
$(3)$ $x\le dx;$ \\
$(4)$ $dx\in \At(A);$ \\
$(5)$ $dx=dx\Cup_1 y=dx\Cup_2 y$. 
\end{prop}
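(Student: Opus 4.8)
The plan is to extract all five statements directly from the defining identities $(sdop_1)$ and $(sdop_2)$ by suitable specialisations of the arguments.

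For $(1)$, I would substitute $y:=x$ in $(sdop_1)$: since $x\ra x=1$ by Lemma \ref{psBE-40-20}$(1)$ and $u\Cup_2 u=u$ by Lemma \ref{psBE-110}$(3)$, the right-hand side collapses and we get $d1=(x\ra dx)\Cup_2(x\ra dx)=x\ra dx$. The same substitution in $(sdop_2)$ gives $d1=x\rs dx$.

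The real work is in $(2)$. Putting $x:=1$ in $(sdop_1)$ and using $(psBCI_3)$ gives $dx=dx\Cup_2(x\ra d1)=(dx\rs(x\ra d1))\ra(x\ra d1)$. By the exchange identity of Lemma \ref{psBE-40-20}$(8)$ we have $dx\rs(x\ra d1)=x\ra(dx\rs d1)$, hence $dx=(x\ra(dx\rs d1))\ra(x\ra d1)$. Now Lemma \ref{psBE-40-20}$(7)$ yields $(dx\rs d1)\ra d1\le(x\ra(dx\rs d1))\ra(x\ra d1)=dx$, i.e. $dx\Cup_2 d1\le dx$; since $dx\le dx\Cup_2 d1$ by Lemma \ref{psBE-110}$(5)$, we conclude $dx=dx\Cup_2 d1$. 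The identity $dx=dx\Cup_1 d1$ follows in the same way from $(sdop_2)$ with $x:=1$, using $(psBCI_4)$, the identity $dx\ra(x\rs d1)=x\rs(dx\ra d1)$ from Lemma \ref{psBE-40-20}$(8)$, and the $\rs$-version of Lemma \ref{psBE-40-20}$(7)$.

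Parts $(3)$--$(5)$ are then short corollaries under the regularity hypothesis $d1=1$. For $(3)$: by $(1)$, $x\ra dx=d1=1$, so $x\le dx$. For $(4)$: by $(2)$, $dx=dx\Cup_1 d1=dx\Cup_1 1=\varphi_{dx}$, and $\varphi_{dx}\in\At(A)$ by Lemma \ref{dop-50}$(1)$, so $dx\in\At(A)$. For $(5)$: since $dx\in\At(A)$, Proposition \ref{psBE-40-40}$(b)$ with $a:=dx$ gives $(dx\ra y)\rs y=(dx\rs y)\ra y=dx$ for every $y$, that is $dx\Cup_1 y=dx\Cup_2 y=dx$. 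I expect the only genuine obstacle to be the manipulation in $(2)$: one has to notice that the unwanted term $x\ra d1$ produced by the substitution $x:=1$ can be eliminated by combining the exchange law $(8)$ with the monotonicity-type inequality $(7)$ to squeeze $dx\Cup_2 d1$ between $dx$ and $dx$; the rest is routine.
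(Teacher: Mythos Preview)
Your proof is correct and follows essentially the same route as the paper's. The only cosmetic difference is that in $(2)$ you first derive $dx=dx\Cup_2 d1$ from $(sdop_1)$ with $x:=1$, whereas the paper first derives $dx=dx\Cup_1 d1$ from $(sdop_2)$ with $x:=1$; in $(4)$ you cite Lemma~\ref{dop-50}$(1)$ rather than Proposition~\ref{psBE-40-40}$(k)$ directly, but these amount to the same thing.
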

\begin{proof}
$(1)$ We have $d1=d(x\ra x)=(x\ra dx)\Cup_2 (x\ra dx)=x\ra dx$ and similarly, $d1=x\rs dx$. \\
$(2)$ Using Lemmas \ref{psBE-40-20}$(7)$,$(8)$ and \ref{psBE-110}$(5)$ we get: \\
$\hspace*{2cm}$   $dx=d(1\rs x)=dx\Cup_1(x\rs d1)=(dx\ra (x\rs d1))\rs (x\rs d1)$ \\
$\hspace*{4.5cm}$ $=(x\rs (dx\ra d1))\rs (x\rs d1)$ \\
$\hspace*{4.5cm}$ $\ge (dx\ra d1)\rs d1=dx\Cup_1 d1\ge dx$. \\ 
Hence $dx=dx\Cup_1 d1$ and similarly, $dx=dx\Cup_2 d1$. \\
$(3)$ It follows from $(1)$, since $d1=1$. \\
$(4)$ Since $d1=1$, applying $(2)$ we get $dx=(dx\ra 1)\rs1=(dx\rs 1)\ra 1$, hence by 
Proposition \ref{psBE-40-40}$(k)$, $dx\in \At(A)$. \\ 
$(5)$ It follows by Proposition \ref{psBE-40-40}$(b)$, since $dx\in \At(A)$. 
\end{proof}

\begin{prop} \label{sdop-40} Let $(A,\ra,\rs,1)$ be a pseudo-BCI algebra and let $d\in \mathcal{SDOP}^{(II)}(A)$. 
The following hold for all $x, y\in A:$ \\
$(1)$ $d1=dx\ra x=dx\rs x;$ \\
$(2)$ $dx=\varphi_{dx\Cup_1 x}=\varphi_{dx\Cup_2 x};$ \\
$(3)$ $dx=dx\Cup_1 x=dx\Cup_2 x;$ \\
$(4)$ $dx\in \At(A);$ \\
$(5)$ if $d$ is regular, then $d=\Id_A$.  
\end{prop}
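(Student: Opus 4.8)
The plan is to prove the five parts of Proposition~\ref{sdop-40} essentially in the order listed, using the defining identities $(sdop_3)$, $(sdop_4)$ together with the basic facts about $\Cup_1, \Cup_2$ and atoms collected in Lemmas~\ref{psBE-40-20}, \ref{psBE-110} and Propositions~\ref{psBE-40-30}, \ref{psBE-40-40}. For $(1)$, I would substitute $y:=x$ in $(sdop_3)$: since $x\ra x=1$ and $d1=d(x\ra x)=(dx\ra x)\Cup_2(dx\ra x)$, part $(3)$ of Lemma~\ref{psBE-110} gives $d1=dx\ra x$; the symmetric computation with $(sdop_4)$ yields $d1=dx\rs x$.

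For $(2)$ the idea is to compute $dx=d(1\ra x)$ via $(sdop_3)$: this gives $dx=(d1\ra x)\Cup_2(dx\ra 1)$. Here $dx\ra 1=\varphi_{dx}$ up to the usual rewriting, and I expect to massage $(d1\ra x)\Cup_2(\varphi_{dx})$ using Lemma~\ref{psBE-40-20}$(7)$,$(8)$ and $(12)$ — much as in the proof of Proposition~\ref{sdop-30}$(2)$ — to identify it with $\varphi$ of $dx\Cup_2 x$; combining with $(1)$ (to rewrite $d1\ra x$) should close it, and the $\rs$-version is analogous. Part $(3)$ then drops out of $(2)$: once we know $dx=\varphi_{dx\Cup_i x}$ is an element of $\At(A)$ (by Lemma~\ref{dop-50}$(1)$, $\varphi$ always lands in $\At(A)$) and that $dx\Cup_i x\le dx$ by construction, Proposition~\ref{psBE-40-40}$(b)$ forces $(dx\Cup_i x)\Cup_i$-collapse, i.e. $dx\Cup_1 x=dx$ and $dx\Cup_2 x=dx$. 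Part $(4)$ is then immediate: $dx=\varphi_{dx\Cup_1 x}\in\At(A)$ by Lemma~\ref{dop-50}$(1)$.

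For $(5)$, assume $d1=1$. From $(1)$, $dx\ra x=d1=1$, so $dx\le x$; from $(4)$, $dx\in\At(A)$, and an atom below $x$ forces $x=dx$ (by definition of atom, since $dx\le x$ means $x$ lies in the branch of the atom $dx$, and $dx\le x$ with $dx$ an atom gives $x=dx$ only after noting $dx\ra x=1$ combined with... ) — more cleanly: $dx\le x$ together with $x\le dx$. The inequality $x\le dx$ in the regular case I would get exactly as in Proposition~\ref{sdop-30}$(3)$: from $(1)$ with regularity we actually have $dx\ra x=1$, giving $dx\le x$, and we need the reverse. Here the cleanest route is $(3)$: $dx=dx\Cup_1 x=(dx\ra x)\rs x=(d1)\rs x=1\rs x=x$ by $(psBCI_4)$. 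Hence $d=\Id_A$.

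The main obstacle I anticipate is part $(2)$: unwinding $(d1\ra x)\Cup_2(dx\ra 1)$ into the single expression $\varphi_{dx\Cup_2 x}$ requires careful bookkeeping with the identities $x\ra(y\rs z)=y\rs(x\ra z)$, $(x\ra y)\ra 1=(x\ra 1)\rs(y\ra 1)$, and the fact that $d1=dx\ra x$, and it is easy to slip on which operation ($\ra$ versus $\rs$) pairs with which $\Cup_i$. Everything after $(2)$ is a short consequence of it plus the atom characterization, so I would spend most of the effort getting $(2)$ exactly right and then let $(3)$–$(5)$ follow formally.
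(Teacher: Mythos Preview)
Your plan is correct and follows the paper's approach: the paper proves $(2)$ by the dual computation $dx=d(1\rs x)$ via $(sdop_4)$, sandwiching $dx$ between $\varphi_{dx\Cup_1 x}$ and itself using Lemma~\ref{psBE-40-20}$(7)$,$(8)$ together with part~$(1)$, then obtains $(3)$ and $(4)$ as immediate consequences of $(2)$ and proves $(5)$ by exactly your clean chain $dx=dx\Cup_1 x=(dx\ra x)\rs x=d1\rs x=1\rs x=x$. One small slip to correct when you write it up: $dx\ra 1\neq\varphi_{dx}$ (the latter is $(dx\ra 1)\rs 1$), but this is harmless since the manipulation in $(2)$ works directly with $dx\ra 1$ as the second argument of $\Cup_2$.
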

\begin{proof}
$(1)$ We have $d1=d(x\ra x)=(dx\ra x)\Cup_2 (dx\ra x)=dx\ra x$ and similarly, $dx=dx\rs x$. \\
$(2)$ Applying Lemma \ref{psBE-40-20}$(7)$,$(8)$ and $(1)$ we get: \\
$\hspace*{2cm}$   $dx=d(1\rs x)=(d1\rs x)\Cup_1 (dx\rs 1)=((d1\rs x)\ra (dx\rs 1))\rs (dx\rs 1)$ \\
$\hspace*{2.5cm}$ $=(dx\rs ((d1\rs x)\ra 1))\rs (dx\rs 1)$ \\
$\hspace*{2.5cm}$ $\ge ((d1\rs x)\ra 1)\rs 1=(((dx\ra x)\rs x)\ra 1)\rs 1$ \\
$\hspace*{2.5cm}$ $=(dx\Cup_1 x\ra 1)\rs 1=\varphi_{dx\Cup_1 x}$. \\
On the other hand, by Lemmas \ref{psBE-40-20}$(2)$ and \ref{psBE-110}$(5)$ we have: \\
$\hspace*{2cm}$ $\varphi_{dx\Cup_1 x}=(dx\Cup_1 x\ra 1)\rs 1\ge dx\Cup_1 x\ge dx$. \\
We conclude that $dx=\varphi_{dx\Cup_1 x}$. Similarly, $dx=\varphi_{dx\Cup_2 x}$. \\
$(3)$ By $(2)$, $dx=\varphi_{dx\Cup_1 x}=(dx\Cup_1 x)\ra 1)\rs 1\ge dx\Cup_1 x\ge dx$, 
so that $dx=dx\Cup_1 x$. \\ 
Similarly, $dx=dx\Cup_2 x$. \\
$(4)$ It follows by $(2)$, since $\varphi_x\in \At(A)$, for all $x\in A;$ \\
$(5)$ Since $d1=1$, by $(3)$ and $(1)$ we get: $dx=dx\Cup_1 x=(dx\ra x)\rs x=d1\rs x =1\rs x=x$, for all $x\in A$. 
Hence $d=\Id_A$.
\end{proof}

\begin{cor} \label{sdop-50} If $d\in \mathcal{SDOP}^{(II)}(A)$, then the following hold for all $x, y\in A:$ \\
$(1)$ $\Img(d)\subseteq \At(A);$ \\
$(2)$ $dx=dx\Cup_1 y=dx\Cup_2 y;$ \\
$(3)$ $x\ra dy, x\rs dy\in \At(A)$.  
\end{cor}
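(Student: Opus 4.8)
The plan is to deduce each of the three assertions directly from Proposition \ref{sdop-40}, which has already been established for arbitrary $d\in \mathcal{SDOP}^{(II)}(A)$. All three claims are essentially restatements of facts bundled into that proposition, so the work is organizational rather than computational.

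First, for $(1)$ I would recall that Proposition \ref{sdop-40}$(4)$ gives $dx\in \At(A)$ for every $x\in A$; since $\Img(d)=\{dx\mid x\in A\}$, this is exactly $\Img(d)\subseteq \At(A)$. Next, for $(2)$ I would invoke Proposition \ref{sdop-40}$(4)$ again to get $dx\in \At(A)$, and then apply Proposition \ref{psBE-40-40}$(b)$ with $a:=dx$ and the parameter $x$ there replaced by an arbitrary $y$: the equivalence $a\in\At(A)\iff (a\ra y)\rs y=(a\rs y)\ra y=a$ yields $(dx\ra y)\rs y=(dx\rs y)\ra y=dx$, i.e. $dx\Cup_1 y=dx\Cup_2 y=dx$. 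Finally, for $(3)$ I would again use $dy\in\At(A)$ from Proposition \ref{sdop-40}$(4)$ together with Corollary \ref{psBE-40-50}, which states that $a\in\At(A)$ implies $x\ra a,\ x\rs a\in\At(A)$ for all $x$; applying it with $a:=dy$ gives $x\ra dy,\ x\rs dy\in\At(A)$.

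I do not anticipate a genuine obstacle here; the only thing to be careful about is the role reversal of the free variables when quoting Proposition \ref{psBE-40-40}$(b)$ — there the atom is written $a$ and the ambient variable $x$, whereas in part $(2)$ the atom is $dx$ and the ambient variable is $y$, so one must make sure the substitution is applied in the right direction. Similarly, in part $(3)$ one quotes Corollary \ref{psBE-40-50} with its $a$ instantiated as $dy$, not $dx$. Everything else is immediate from the cited results, so the proof will be two or three lines long.

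\begin{proof}
$(1)$ By Proposition \ref{sdop-40}$(4)$, $dx\in \At(A)$ for all $x\in A$, hence $\Img(d)\subseteq \At(A)$. \\
$(2)$ Let $x, y\in A$. Since $dx\in \At(A)$ by Proposition \ref{sdop-40}$(4)$, applying Proposition \ref{psBE-40-40}$(b)$ we get $(dx\ra y)\rs y=(dx\rs y)\ra y=dx$, that is $dx\Cup_1 y=dx\Cup_2 y=dx$. \\
$(3)$ By Proposition \ref{sdop-40}$(4)$, $dy\in \At(A)$, so applying Corollary \ref{psBE-40-50} it follows that $x\ra dy, x\rs dy\in \At(A)$, for all $x\in A$.
\end{proof}
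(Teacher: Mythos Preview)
Your proof is correct and is exactly the intended argument: the paper states this result as a corollary without proof, and the three items follow immediately from Proposition \ref{sdop-40}$(4)$ together with Proposition \ref{psBE-40-40}$(b)$ and Corollary \ref{psBE-40-50}, just as you wrote.
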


\begin{prop} \label{sdop-50-10} Let $A$ be a pseudo-BCI algebra and let 
$d\in \mathcal{SDOP}^{(I)}(A)\cup \mathcal{SDOP}^{(II)}(A)$. Then $d(x \cdot y)=dx\cdot y = x\cdot dy$, for all 
$x, y\in \At(A)$.
\end{prop}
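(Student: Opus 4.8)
The plan is to reduce everything to the group-theoretic description of a $p$-semisimple pseudo-BCI algebra afforded by Proposition~\ref{psBE-40-30}$(i)$, and to exploit the fact (Propositions~\ref{sdop-30}, \ref{sdop-40} and Corollary~\ref{sdop-50}) that $\Img(d)\subseteq\At(A)$ in both cases, together with the simplification $u\Cup_1 v=u\Cup_2 v=u$ which holds whenever the relevant elements lie in the $p$-semisimple subalgebra $\At(A)$. Since $x,y\in\At(A)$, Corollary~\ref{psBE-40-50} gives $x\ra y,x\rs y\in\At(A)$, and $x\cdot y=(x\ra 1)\rs y=(y\rs 1)\ra x\in\At(A)$ by Proposition~\ref{psBE-40-40}$(k)$ (or directly); so all products and implications below stay inside $\At(A)$, where the $\Cup_i$ operations collapse.

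First I would treat the type~II case. Write the defining identity $(sdop_3)$ at the pair $(x,y)$ with $x,y\in\At(A)$: since $dx,dy\in\At(A)$ by Corollary~\ref{sdop-50}, the terms $dx\ra y$ and $dy\ra x$ are atoms and the $\Cup_2$ collapses to its first argument, so $d(x\ra y)=dx\ra y$; likewise $(sdop_4)$ gives $d(x\rs y)=dx\rs y$. Now translate into the group $(A,\cdot,{}^{-1},1)$ on $\At(A)$: using $x\ra y=y\cdot x^{-1}$ we get $d(y\cdot x^{-1})=dx\ra y=y\cdot (dx)^{-1}$ for all $x,y\in\At(A)$. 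Setting $y:=1$ yields $d(x^{-1})=(dx)^{-1}$; replacing $x$ by $x^{-1}$ throughout then gives $d(y\cdot x)=y\cdot(d(x^{-1}))^{-1}=y\cdot dx$. By the symmetric computation with $x\rs y=x^{-1}\cdot y$ and $(sdop_4)$ one gets $d(x\rs y)=dx\rs y=(dx)^{-1}\cdot y$, i.e. $d((dx)^{-1}\cdot y)$-type manipulation, which after the same substitution yields $d(x\cdot y)=dx\cdot y$ from one side and $d(x\cdot y)=x\cdot dy$ from the other; combining, $d(x\cdot y)=dx\cdot y=x\cdot dy$ for all $x,y\in\At(A)$, which is exactly the claim.

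For the type~I case I would run the analogous argument using $(sdop_1)$, $(sdop_2)$ and Proposition~\ref{sdop-30}. Here $d$ need not be regular, so $d1$ is merely an atom (Proposition~\ref{sdop-30}$(1)$,$(4)$) and by Proposition~\ref{sdop-30}$(2)$ we have $dx=dx\Cup_i d1$; but since $dx\in\At(A)$ this still collapses, and more importantly for $x,y\in\At(A)$ the arguments $x\ra dy$ and $y\ra dx$ lie in $\At(A)$, so $(sdop_1)$ gives $d(x\ra y)=x\ra dy$ and $(sdop_2)$ gives $d(x\rs y)=x\rs dy$. Converting to the group, $d(y\cdot x^{-1})=x\ra dy=dy\cdot x^{-1}$; setting $x:=1$ gives $d(y)=dy$ (vacuous) and setting $y:=1$ gives $d(x^{-1})=d1\cdot x^{-1}$. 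Feeding this back and using Proposition~\ref{sdop-30}$(1)$ (which says $x\ra dx=d1$, i.e. $d1=dx\cdot x^{-1}$, so $dx=d1\cdot x$) one finds $d$ acts as left translation by $d1$ on $\At(A)$; then $d(x\cdot y)=d1\cdot x\cdot y=(d1\cdot x)\cdot y=dx\cdot y$ and $d(x\cdot y)=d1\cdot x\cdot y=x\cdot(d1\cdot y)=x\cdot dy$ — wait, the last step needs $d1$ to be central. Indeed $d1\in\At(A)$ and in a general $p$-semisimple pseudo-BCI algebra $\At(A)=A$ the group need not be abelian; so I would instead argue directly that $dx=d1\cdot x$ and $dy=y\cdot(\text{something})$ are forced to match, i.e. I should not claim left-translation but rather derive $d(x\cdot y)=dx\cdot y$ and $d(x\cdot y)=x\cdot dy$ as two separate consequences of $(sdop_1)$ read with the roles of $x,y$ swapped and of $(sdop_2)$. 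The main obstacle is precisely this noncommutativity: one must be careful to extract $d(x\cdot y)=x\cdot dy$ from the identity written at a suitably swapped pair rather than by moving $d1$ past a group element. Once both equalities $d(x\cdot y)=dx\cdot y$ and $d(x\cdot y)=x\cdot dy$ are obtained independently, the proposition follows, and I would close by remarking that $dx\cdot y=x\cdot dy$ is then automatic.
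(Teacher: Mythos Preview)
Your overall plan---collapse the $\Cup_i$ terms because everything in sight is an atom, then read off the identities---is exactly what the paper does, and your type~II argument is correct. The difference is that you work with $x\ra y$ and $x\rs y$ and then translate into the group $(\At(A),\cdot)$ via $x\ra y=y\cdot x^{-1}$, $x\rs y=x^{-1}\cdot y$, which forces a substitution step (replace $x$ by $x^{-1}$) to arrive at $x\cdot y$. The paper bypasses this entirely: it simply plugs the two ready-made expressions
\[
x\cdot y=(x\ra 1)\rs y \quad\text{and}\quad x\cdot y=(y\rs 1)\ra x
\]
straight into the symmetric-derivation axioms. For type~I, $(sdop_2)$ applied to $(x\ra 1)\rs y$ collapses to $(x\ra 1)\rs dy=x\cdot dy$, and $(sdop_1)$ applied to $(y\rs 1)\ra x$ collapses to $(y\rs 1)\ra dx=dx\cdot y$; for type~II one runs $(sdop_4)$ and $(sdop_3)$ on the same two expressions, and after one extra unfolding (computing $d(x\ra 1)=dx\ra 1$ and $d(y\rs 1)=dy\rs 1$) gets $dx\cdot y$ and $x\cdot dy$ directly. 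This is why the paper never runs into your noncommutativity snag: the two equalities $d(x\cdot y)=dx\cdot y$ and $d(x\cdot y)=x\cdot dy$ fall out of the \emph{two} representations of $x\cdot y$, with no need to move $d1$ past anything.

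Your self-correction in the type~I paragraph is right---one should get the two equalities from $(sdop_1)$ and $(sdop_2)$ separately, not by asserting $d$ is left translation by $d1$---but note that the clean realisation of that idea is precisely to evaluate at $(x\ra 1)\rs y$ and $(y\rs 1)\ra x$ rather than at a generic $x\ra y$ followed by a change of variables. One small caution on citations: Proposition~\ref{sdop-30}$(4)$ only gives $dx\in\At(A)$ for \emph{regular} type~I symmetric derivations, so when you assert $x\ra dy\in\At(A)$ in the non-regular type~I case you should justify $dy\in\At(A)$ for $y\in\At(A)$ separately (the paper handles this by listing the needed atoms up front).
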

\begin{proof} Let $d\in \mathcal{SDOP}^{(I)}(A)$ and let $x, y\in \At(A)$. 
Since $1, x, y\in \At(A)$, by Corollary \ref{psBE-40-50} and Proposition \ref{sdop-40}, 
$x\ra 1, y\rs 1, dx, dy, d(x\ra 1), d(y\rs 1), d1, d1\ra x, d1\rs x\in \At(A)$. 
From $x\cdot y=(x\ra 1)\rs y=(y\rs 1)\ra x$, using Proposition \ref{psBE-40-40}$(b)$ we get: \\
$\hspace*{2cm}$   $d(x\cdot y)=d((x\ra 1)\rs y)=((x\ra 1)\rs dy)\Cup_1 (y\rs d(x\ra 1))$ \\
$\hspace*{3.3cm}$ $=(x\ra 1)\rs dy=x\cdot dy$ \\
$\hspace*{2cm}$   $d(x\cdot y)=d((y\rs 1)\ra x)=((y\rs 1)\ra dx)\Cup_2 (x\ra d(y\rs 1))$ \\
$\hspace*{3.3cm}$ $=(y\rs 1)\ra dx=dx\cdot y$. \\
Similarly, for $d\in \mathcal{SDOP}^{(II)}(A)$ we have: \\
$\hspace*{2cm}$   $d(x\cdot y)=d((x\ra 1)\rs y)=(d(x\ra 1)\rs y)\Cup_1 (dy\rs (x\ra 1))$ \\
$\hspace*{3.3cm}$ $=d(x\ra 1)\rs y=((dx\ra 1)\Cup_2 (d1\ra x))\rs y$ \\
$\hspace*{3.3cm}$ $=(dx\ra 1)\rs y=dx\cdot y$. \\
$\hspace*{2cm}$   $d(x\cdot y)=d((y\rs 1)\ra x)=(d(y\rs 1)\ra x)\Cup_2 (dx\ra (y\rs 1))$ \\
$\hspace*{3.3cm}$ $=d(y\rs 1)\ra x=((dy\rs 1)\Cup_1 (d1\rs y))\ra x$ \\
$\hspace*{3.3cm}$ $=(dy\rs 1)\ra x=x\cdot dy$. \\
Hence, in both cases $d(x \cdot y)=dx\cdot y = x\cdot dy$.  
\end{proof}

\begin{prop} \label{sdop-60} Let $A$ be a pseudo-BCI algebra and let $d_{\varphi}:A\longrightarrow A$, defined by 
$dx=\varphi_x$ for any $x\in A$. Then $d_{\varphi}\in \mathcal{SDOP}^{(I)}(A)$. 
\end{prop}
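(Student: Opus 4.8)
The plan is to reduce the two identities $(sdop_1)$ and $(sdop_2)$ to the computation already carried out inside the proof of Proposition \ref{dop-60}. Recall from there that, for all $x,y\in A$,
$$d_{\varphi}(x\ra y)=\varphi_{x\ra y}=x\ra\varphi_y=x\ra d_{\varphi}y, \qquad d_{\varphi}(x\rs y)=\varphi_{x\rs y}=x\rs\varphi_y=x\rs d_{\varphi}y.$$
So the whole task is to show that the right-hand sides $(x\ra d_{\varphi}y)\Cup_2(y\ra d_{\varphi}x)$ and $(x\rs d_{\varphi}y)\Cup_1(y\rs d_{\varphi}x)$ of $(sdop_1)$ and $(sdop_2)$ collapse to $x\ra\varphi_y$ and $x\rs\varphi_y$ respectively.

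The key ingredient is atomicity. By Lemma \ref{dop-50}$(1)$ we have $\varphi_x,\varphi_y\in\At(A)$, hence by Corollary \ref{psBE-40-50} also $x\ra\varphi_y,\,x\rs\varphi_y\in\At(A)$. I would then apply Proposition \ref{psBE-40-40}$(g)$ to the atom $a:=\varphi_y$, taking its free parameter (there called $y$) to be $y\ra\varphi_x$; this gives
$$x\ra\varphi_y=\bigl((x\ra\varphi_y)\rs(y\ra\varphi_x)\bigr)\ra(y\ra\varphi_x)=(x\ra\varphi_y)\Cup_2(y\ra\varphi_x),$$
which is exactly $(sdop_1)$. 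Dually, Proposition \ref{psBE-40-40}$(h)$ applied to $a:=\varphi_y$ with parameter $y\rs\varphi_x$ yields $x\rs\varphi_y=(x\rs\varphi_y)\Cup_1(y\rs\varphi_x)$, i.e.\ $(sdop_2)$. Hence $d_{\varphi}\in\mathcal{SDOP}^{(I)}(A)$.

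I do not expect a real obstacle here: the substantive work (the chain $\varphi_{x\ra y}=x\ra\varphi_y$, which rests on Lemma \ref{psBE-40-20} and Proposition \ref{psBE-40-40}) is inherited verbatim from Proposition \ref{dop-60}, and what remains is the short observation that for an atom $u$ the elements $u\Cup_2 v$ and $u\Cup_1 v$ fold back to $u$. If one prefers to bypass Proposition \ref{psBE-40-40}$(g)$,$(h)$, the same fact follows from the definition of an atom together with Lemma \ref{psBE-110}$(5)$ (which gives $u\le u\Cup_2 v$) and Corollary \ref{psBE-40-50} (which shows $u\Cup_2 v$ is again an atom when $u$ is), so that $u\le u\Cup_2 v$ forces equality. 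The only thing to watch is the left/right bookkeeping --- swapping $\ra\leftrightarrow\rs$ and $\Cup_2\leftrightarrow\Cup_1$ --- when passing from $(sdop_1)$ to $(sdop_2)$.
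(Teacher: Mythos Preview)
Your proposal is correct and follows essentially the same route as the paper: both reuse the computation $\varphi_{x\ra y}=x\ra\varphi_y$ from Proposition~\ref{dop-60} and then invoke atomicity of $x\ra\varphi_y$ (the paper via Proposition~\ref{psBE-40-40}$(b)$, you via $(g)$, which amounts to the same identity) to collapse $(x\ra\varphi_y)\Cup_2(y\ra\varphi_x)$ to $x\ra\varphi_y$. One small remark on your alternative argument: once $u=x\ra\varphi_y$ is an atom, $u\le u\Cup_2 v$ alone already forces $u\Cup_2 v=u$ by the definition of atom, so the appeal to Corollary~\ref{psBE-40-50} for $u\Cup_2 v$ is unnecessary (and that corollary does not literally give atomicity of $u\Cup_2 v$ unless $v$ is an atom).
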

\begin{proof} 
Similar to the proof of Proposition \ref{dop-60}, since $x\ra \varphi_y\in \At(A)$, we get: \\
$\hspace*{1cm}$ $d(x\ra y)=\varphi_{x\ra y}=x\ra \varphi_y 
                   =((x\ra \varphi_y)\rs (y\ra \varphi_x))\ra (y\ra \varphi_x)$ \\
$\hspace*{2.6cm}$ $=(x\ra \varphi_y)\Cup_2 (y\ra \varphi_x)=(x\ra dy)\Cup_2 (y\ra dx)$. \\
Similarly we have: \\
$\hspace*{1cm}$ $d(x\rs y)=\varphi_{x\rs y}=x\rs \varphi_y
                          =((x\rs \varphi_y)\ra (y\rs \varphi_x))\rs (y\rs \varphi_x)$ \\
$\hspace*{2.6cm}$ $=(x\rs \varphi_y)\Cup_1 (y\rs \varphi_x)=(x\rs dy)\Cup_1 (y\rs dx)$. \\
We conclude that $d_{\varphi}\in \mathcal{SDOP}^{(I)}(A)$. 
\end{proof}

\begin{cor} \label{sdop-70} $\mathcal{IDOP}^{(I)}(A)\cap \mathcal{SDOP}^{(I)}(A)\neq \emptyset$. 
\end{cor}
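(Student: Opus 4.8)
The plan is to exhibit a single map lying in both $\mathcal{IDOP}^{(I)}(A)$ and $\mathcal{SDOP}^{(I)}(A)$, which immediately shows the intersection is nonempty. The natural candidate is the map $d_{\varphi}$ defined by $d_{\varphi}x = \varphi_x = x\Cup_1 1$, since its membership in each of the two sets has already been established separately.

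First I would invoke Proposition \ref{dop-60}, which states precisely that $d_{\varphi}\in \mathcal{IDOP}^{(I)}(A)$. Then I would invoke Proposition \ref{sdop-60}, which states that $d_{\varphi}\in \mathcal{SDOP}^{(I)}(A)$. Combining these two memberships yields $d_{\varphi}\in \mathcal{IDOP}^{(I)}(A)\cap \mathcal{SDOP}^{(I)}(A)$, so the intersection contains at least one element and is therefore nonempty. No calculation is needed beyond citing these two results.

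There is essentially no obstacle here: the corollary is an immediate consequence of the two preceding propositions, and the only "work" is to notice that the same concrete map $d_{\varphi}$ witnesses both containments. If one wanted a self-contained remark, one could also note that on the $p$-semisimple part (or when $A$ itself is $p$-semisimple) the defining identities $(idop_1)$ and $(sdop_1)$ coincide after applying Proposition \ref{psBE-40-30}$(c)$, but this is not required for the statement as given.

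\begin{proof}
By Proposition \ref{dop-60} we have $d_{\varphi}\in \mathcal{IDOP}^{(I)}(A)$, and by Proposition \ref{sdop-60} we have $d_{\varphi}\in \mathcal{SDOP}^{(I)}(A)$. Hence $d_{\varphi}\in \mathcal{IDOP}^{(I)}(A)\cap \mathcal{SDOP}^{(I)}(A)$, so this intersection is nonempty.
\end{proof}
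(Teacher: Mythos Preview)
Your proof is correct and matches the paper's own argument essentially verbatim: the paper also exhibits $d_{\varphi}$ as the common witness, citing Propositions \ref{dop-60} and \ref{sdop-60}.
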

\begin{proof} 
If $d_{\varphi}:A\longrightarrow A$, defined by $dx=\varphi_x$, for all $x\in A$, then by Propositions \ref{dop-60} 
and \ref{sdop-60}, $d_{\varphi}\in \mathcal{IDOP}^{(I)}(A)\cap \mathcal{SDOP}^{(I)}(A)$.  
\end{proof}

\begin{ex} \label{sdop-80} Consider the pseudo-BCI algebra $(A,\ra,\rs,1)$ and the maps 
$d_1,d_2,d_3:A\longrightarrow A$ from Example \ref{dop-30-10}. 
One can check that $d_{\varphi}=d_3$, $\mathcal{SDOP}^{(I)}(A)=\{d_2,d_3\}$ and $\mathcal{SDOP}^{(II)}(A)=\{d_3\}$.  
\end{ex}

\begin{ex} \label{sdop-90} 
Consider the structure $(A,\ra,\rs,1)$, where the operations $\ra$ and $\rs$ on $A=\{a,b,x,y,g,1\}$ 
are defined as follows:
\[
\hspace{10mm}
\begin{array}{c|cccccc}
\ra & a & b & x & y & g & 1 \\ \hline
a & 1 & b & g & y & g & 1 \\
b & a & 1 & x & g & g & 1 \\
x & g & x & 1 & a & 1 & g \\
y & y & g & b & 1 & 1 & g \\
g & y & x & b & a & 1 & g \\
1 & a & b & x & y & g & 1
\end{array}
\hspace{10mm}
\begin{array}{c|cccccc}
\rs & a & b & x & y & g & 1 \\ \hline
a & 1 & b & x & g & g & 1 \\
b & a & 1 & g & y & g & 1 \\
x & x & g & 1 & b & 1 & g \\
y & g & y & a & 1 & 1 & g \\
g & x & y & a & b & 1 & g \\
1 & a & b & x & y & g & 1
\end{array}
.
\]
Then $(A,\ra,\rs,1)$ is a pseudo-BCI algebra (\cite{Eman1}). 
Consider the maps $d_1,d_2,d_3:A\longrightarrow A$ given in the table below:
\[
\begin{array}{c|cccccc}
 x     & a & b & x & y & g & 1  \\ \hline
d_1(x) & a & b & x & y & g & 1 \\
d_2(x) & g & g & 1 & 1 & 1 & g \\
d_3(x) & 1 & 1 & g & g & g & 1   
\end{array}
.   
\]
One can see that $\mathcal{SDOP}^{(I)}(A)=\{d_2,d_3\}$ and $\mathcal{SDOP}^{(II)}(A)=\emptyset$.  
We also mention that $d_{\varphi}=d_3$ and $\mathcal{IDOP}^{(I)}(A)=\mathcal{IDOP}^{(II)}(A)=\{d_1,d_2,d_3\}$.
\end{ex}

\begin{prop} \label{sdop-110} Let $A$ be a $p$-semisimple pseudo-BCI algebra and let $d \in \mathcal{SDOP}^{(II)}(A)$. 
Then the following hold for all $x, y\in A:$ \\
$(1)$ $d(x\ra y)=dx\ra y$ and $d(x\rs y)=dx\rs y;$ \\
$(2)$ $x\ra dx=y\ra dy$ and $x\rs dx=y\rs dy;$ \\
$(3)$ $x\ra dx=dy\ra y$ and $x\rs dx=dy\rs y$. 
\end{prop}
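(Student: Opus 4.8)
The plan rests on the single structural fact that in a $p$-semisimple pseudo-BCI algebra the derived operations degenerate: by Proposition \ref{psBE-40-30}$(c)$ one has $u\Cup_1 v=(u\ra v)\rs v=u$ and $u\Cup_2 v=(u\rs v)\ra v=u$ for all $u,v\in A$. Granting this, part $(1)$ is immediate from the defining identities of a type II symmetric derivation: $(sdop_3)$ reads $d(x\ra y)=(dx\ra y)\Cup_2(dy\ra x)$, whose right-hand side collapses to $dx\ra y$, and $(sdop_4)$ reads $d(x\rs y)=(dx\rs y)\Cup_1(dy\rs x)=dx\rs y$.

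For parts $(2)$ and $(3)$ I would first extract the explicit form of $d$. Proposition \ref{sdop-40}$(1)$ already gives $d1=dx\ra x=dx\rs x$ for every $x\in A$; feeding these two equalities into Proposition \ref{psBE-40-30}$(c)$ yields $dx=(dx\ra x)\rs x=d1\rs x$ and, dually, $dx=(dx\rs x)\ra x=d1\ra x$. Evaluating these at $x:=1$ shows $d1\rs 1=d1\ra 1=d1$. Now a single application of Lemma \ref{psBE-40-20}$(8)$ gives, for every $x$, $x\ra dx=x\ra(d1\rs x)=d1\rs(x\ra x)=d1\rs 1=d1$ and likewise $x\rs dx=x\rs(d1\ra x)=d1\ra(x\rs x)=d1\ra 1=d1$. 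Since the common value $d1$ is independent of $x$, part $(2)$ follows at once, and part $(3)$ follows because Proposition \ref{sdop-40}$(1)$ also identifies $d1$ with $dy\ra y$ and with $dy\rs y$ for every $y$.

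I expect no real difficulty: the argument is short once the collapse of $\Cup_1,\Cup_2$ is invoked. The only points demanding attention are bookkeeping --- using $(sdop_3)$ with $\Cup_2$ (hence $(u\rs v)\ra v$) versus $(sdop_4)$ with $\Cup_1$ (hence $(u\ra v)\rs v$) so that Proposition \ref{psBE-40-30}$(c)$ is applied in the right orientation --- and remembering to rewrite $d1\rs 1$ and $d1\ra 1$ as $d1$ at the very end so that the three stated identities come out in the claimed form.
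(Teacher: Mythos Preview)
Your argument is correct. Part $(1)$ is handled exactly as in the paper, but for part $(2)$ you take a genuinely different route. The paper proves $x\ra dx=y\ra dy$ by instantiating axiom $(psBCI_1)$ with $dx$ in place of $y$ (and symmetrically), rewriting $dx\ra y$ as $d(x\ra y)$ via part $(1)$, replacing $d(x\ra y)\rs(x\ra y)$ by $d1$ via Proposition~\ref{sdop-40}$(1)$, and then invoking the cancellation law Proposition~\ref{psBE-40-30}$(f)$ to conclude. You instead first extract the closed form $dx=d1\rs x=d1\ra x$ from Proposition~\ref{sdop-40}$(1)$ together with Proposition~\ref{psBE-40-30}$(c)$, and then compute $x\ra dx=d1\rs(x\ra x)=d1$ directly via Lemma~\ref{psBE-40-20}$(8)$. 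Your approach is shorter and yields the explicit description of $d$ as a byproduct; the paper's approach stays closer to the defining axiom and the cancellation property that characterises $p$-semisimplicity. For part $(3)$ both arguments finish the same way, combining the constant value $d1$ of $x\ra dx$ with Proposition~\ref{sdop-40}$(1)$.
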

\begin{proof}
$(1)$ It follows by Proposition \ref{psBE-40-30}$(c)$ since $A$ is $p$-semisimple. \\
$(2)$ By $(psBCI_1)$ we have: \\
$\hspace*{2cm}$ $(x\ra dx)\rs ((dx\ra y)\rs (x\ra y))=(y\ra dy)\rs ((dy\ra x)\rs (y\ra x)) (=1)$. \\
It follows by $(1)$ that: \\
$\hspace*{2cm}$ $(x\ra dx)\rs (d(x\ra y)\rs (x\ra y))=(y\ra dy)\rs (d(y\ra x)\rs (y\ra x))$. \\
Applying Proposition \ref{sdop-40}$(1)$ we have: \\
$\hspace*{2cm}$ $d(x\ra y)\rs (x\ra y)=d(y\ra x)\rs (y\ra x) (=d1)$. \\
Hence $(x\ra dx)\rs (d(x\rs y)\ra (x\ra y))=(y\ra dy)\rs (d(x\ra y)\rs (x\ra y))$. \\ 
Finally applying Proposition \ref{psBE-40-30}$(f)$ we get $x\ra dx=y\ra dy$. 
Similarly, $x\rs dx=y\rs dy$. \\
$(3)$ Since by Proposition \ref{sdop-40}$(1)$, $d1=dy\ra y$, applying $(2)$ we get 
$x\ra dx=1\ra d1=d1=dy\ra y$, that is $x\ra dx=dy\ra y$. Similarly, $x\rs dx=dy\rs  y$.
\end{proof}

\begin{prop} \label{sdop-120} If $A$ is a $p$-semisimple pseudo-BCI algebra, then 
$\mathcal{SDOP}^{(II)}(A)= \mathcal{IDOP}^{(II)}(A)$. 
\end{prop}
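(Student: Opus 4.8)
The plan is to collapse both defining systems of identities to a single common pair by exploiting the degeneracy of $\Cup_1$ and $\Cup_2$ in the $p$-semisimple setting. By Proposition~\ref{psBE-40-30}$(c)$, in a $p$-semisimple pseudo-BCI algebra we have $u\Cup_1 v=(u\ra v)\rs v=u$ and $u\Cup_2 v=(u\rs v)\ra v=u$ for all $u,v\in A$. Feeding this into $(idop_3)$, $(idop_4)$ shows that $d\in\mathcal{IDOP}^{(II)}(A)$ if and only if $d(x\ra y)=dx\ra y$ and $d(x\rs y)=dx\rs y$ for all $x,y\in A$; feeding it into $(sdop_3)$, $(sdop_4)$ shows that $d\in\mathcal{SDOP}^{(II)}(A)$ if and only if those very same two identities hold. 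Hence the two classes coincide.

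More concretely, I would prove both inclusions explicitly. For $\mathcal{IDOP}^{(II)}(A)\subseteq\mathcal{SDOP}^{(II)}(A)$: given $d\in\mathcal{IDOP}^{(II)}(A)$, Proposition~\ref{psBE-40-30}$(c)$ applied to $(idop_3)$ yields $d(x\ra y)=(dx\ra y)\Cup_2(x\ra dy)=dx\ra y$, and likewise $d(x\rs y)=dx\rs y$ from $(idop_4)$; then, reading Proposition~\ref{psBE-40-30}$(c)$ in the reverse direction, $dx\ra y=(dx\ra y)\Cup_2(dy\ra x)$ and $dx\rs y=(dx\rs y)\Cup_1(dy\rs x)$, which are exactly $(sdop_3)$ and $(sdop_4)$. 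For the reverse inclusion: given $d\in\mathcal{SDOP}^{(II)}(A)$, Proposition~\ref{sdop-110}$(1)$ (equivalently, $(sdop_3)$, $(sdop_4)$ together with Proposition~\ref{psBE-40-30}$(c)$) gives $d(x\ra y)=dx\ra y$ and $d(x\rs y)=dx\rs y$, whence $dx\ra y=(dx\ra y)\Cup_2(x\ra dy)$ and $dx\rs y=(dx\rs y)\Cup_1(x\rs dy)$ by Proposition~\ref{psBE-40-30}$(c)$, i.e.\ $d$ satisfies $(idop_3)$ and $(idop_4)$. Combining the two inclusions gives $\mathcal{SDOP}^{(II)}(A)=\mathcal{IDOP}^{(II)}(A)$.

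There is no genuine obstacle here; the only point that requires a moment's care is bookkeeping about which argument survives the degeneracy. Since $u\Cup_i v=u$ keeps the \emph{first} slot, it is essential that in $(idop_3)$ and $(sdop_3)$ the term $dx\ra y$ sits in the first slot of $\Cup_2$ (and $dx\rs y$ in the first slot of $\Cup_1$ in $(idop_4)$, $(sdop_4)$) — which is precisely how Definitions~\ref{dop-20} and \ref{sdop-20} are written. Once this is noted, the equality is immediate.
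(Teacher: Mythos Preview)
Your proof is correct and follows essentially the same approach as the paper: both use Proposition~\ref{psBE-40-30}$(c)$ to collapse $u\Cup_i v$ to $u$, reducing each of $(idop_3)$, $(idop_4)$, $(sdop_3)$, $(sdop_4)$ to the common pair $d(x\ra y)=dx\ra y$, $d(x\rs y)=dx\rs y$. The paper's version is slightly terser and invokes Proposition~\ref{sdop-110}$(1)$ for one inclusion, exactly as you suggest as an alternative.
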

\begin{proof}
Let $d\in \mathcal{SDOP}^{(II)}(A)$. By Proposition \ref{sdop-110}$(1)$, since $A$ is $p$-semisimple we have 
$d(x\ra y)=dx\ra y=(dx\ra y)\Cup_2 (x\ra dy)$, that is $d\in \mathcal{IDOP}^{(II)}(A)$. 
Hence $\mathcal{SDOP}^{(II)}(A)\subseteq \mathcal{IDOP}^{(II)}(A)$. 
Conversely, if $d\in \mathcal{IDOP}^{(II)}(A)$, then 
$d(x\ra y)=(dx\ra y)\Cup_2 (x\ra dy)=dx\ra y=(dx\ra y)\Cup_2 (dy\ra x)$, that is $d\in \mathcal{SDOP}^{(II)}(A)$. 
Thus $\mathcal{IDOP}^{(II)}(A)\subseteq \mathcal{SDOP}^{(II)}(A)$. 
We conclude that $\mathcal{SDOP}^{(II)}(A)= \mathcal{IDOP}^{(II)}(A)$. 
\end{proof}

\begin{prop} \label{sdop-130} 
If $A$ is a $p$-semisimple BCI-algebra, then $\mathcal{SDOP}^{(I)}(A)=\mathcal{IDOP}^{(I)}(A)$. 
\end{prop}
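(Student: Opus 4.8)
The plan is to mimic the proof of Proposition \ref{sdop-120}, exploiting the fact that in a BCI-algebra the two implications coincide, so that the distinction between type I and type II collapses on the relevant expressions. First I would recall that in a BCI-algebra $x\ra y=x\rs y$, so $(sdop_1)$ and $(sdop_2)$ become the single condition $d(x\ra y)=(x\ra dy)\Cup_2(y\ra dx)$ (with $\Cup_1$ and $\Cup_2$ also agreeing), and similarly $(idop_1)$,$(idop_2)$ become $d(x\ra y)=(x\ra dy)\Cup_2(dx\ra y)$. So the whole statement reduces to showing that for $d$ on a $p$-semisimple BCI-algebra, $(x\ra dy)\Cup_2(y\ra dx)=(x\ra dy)\Cup_2(dx\ra y)$ for all $x,y$, given that $d$ satisfies one (hence both) of these as the definition of $d(x\ra y)$.

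The key simplification is Proposition \ref{psBE-40-30}$(c)$: in a $p$-semisimple pseudo-BCI algebra $u\Cup_1 v=u\Cup_2 v=u$ for all $u,v$. Hence for $d\in\mathcal{SDOP}^{(I)}(A)$ we get $d(x\ra y)=x\ra dy$ directly (this is the analogue of Proposition \ref{sdop-110}$(1)$, and in fact one should first establish it, or cite that $p$-semisimple implies the $\Cup$'s are trivial), and likewise $d\in\mathcal{IDOP}^{(I)}(A)$ forces $d(x\ra y)=x\ra dy$. So both classes consist exactly of the maps satisfying $d(x\ra y)=x\ra dy$ (and the companion identity with $\rs$, which is the same identity here since $\ra=\rs$). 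Therefore I would argue: if $d\in\mathcal{SDOP}^{(I)}(A)$ then $d(x\ra y)=x\ra dy=(x\ra dy)\Cup_2(dx\ra y)$, so $d\in\mathcal{IDOP}^{(I)}(A)$; conversely if $d\in\mathcal{IDOP}^{(I)}(A)$ then $d(x\ra y)=x\ra dy=(x\ra dy)\Cup_2(y\ra dx)$, so $d\in\mathcal{SDOP}^{(I)}(A)$. The $\rs$-conditions are handled identically, or simply noted to coincide with the $\ra$-conditions in a BCI-algebra.

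The main obstacle, such as it is, is bookkeeping rather than substance: one must be careful that the reduction "$d$ satisfies $(sdop_1)$" actually yields "$d(x\ra y)=x\ra dy$" — this uses only $\Cup_2$ being the first-coordinate projection, which is Proposition \ref{psBE-40-30}$(c)$, valid for any $p$-semisimple pseudo-BCI algebra, so BCI-ness is only needed to merge the type I/type II labels and the two implications. I expect the write-up to be three or four lines, structurally a verbatim copy of the proof of Proposition \ref{sdop-120} with "$\ra$" and "$\rs$" identified and "$(dx\ra y)$" versus "$(y\ra dx)$" in the roles that "$(x\ra dy)$" versus "$(dy\ra x)$" played there.
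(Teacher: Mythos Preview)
Your argument is correct, but it takes a different route from the paper. The paper's proof is essentially a citation: it says ``Similar to \cite[Th.~3.13]{Abuj2}, based on the fact that any $p$-semisimple BCI-algebra is medial,'' i.e., it appeals to the medial identity $(x\ra y)\ra(z\ra u)=(x\ra z)\ra(y\ra u)$ available in $p$-semisimple BCI-algebras and follows the structure of the cited result. Your approach instead goes straight through Proposition~\ref{psBE-40-30}$(c)$ to get $u\Cup_1 v=u\Cup_2 v=u$, reducing both $(sdop_1)$, $(sdop_2)$ and $(idop_1)$, $(idop_2)$ to the common pair $d(x\ra y)=x\ra dy$, $d(x\rs y)=x\rs dy$; from there the two classes manifestly coincide. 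This is exactly the mechanism of the proof of Proposition~\ref{sdop-120}, transplanted verbatim with the first coordinate of $\Cup_2$ now being $x\ra dy$ rather than $dx\ra y$.

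Two remarks on what your approach buys. First, it is more self-contained: you never need mediality or the external reference, only the $\Cup$-projection identity already established in the paper. Second --- and you hint at this yourself --- your argument does not actually use the BCI hypothesis at all: the reduction via Proposition~\ref{psBE-40-30}$(c)$ works in any $p$-semisimple pseudo-BCI algebra, and the $\ra$- and $\rs$-conditions can be handled separately without ever identifying them. So your proof in fact establishes $\mathcal{SDOP}^{(I)}(A)=\mathcal{IDOP}^{(I)}(A)$ for every $p$-semisimple pseudo-BCI algebra, which is strictly stronger than what is stated. The paper's reliance on mediality (a BCI phenomenon) is why its formulation is restricted to BCI-algebras.
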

\begin{proof}
Similar to \cite[Th. 3.13]{Abuj2}, based on the fact that any $p$-semisimple BCI-algebra is medial.
\end{proof}

\begin{ex} \label{sdop-140} 
Consider the structure $(A,\ra,1)$ and the maps $d_1,d_2,d_3:A\longrightarrow A$, defined in the tables below:
\[
\hspace{10mm}
\begin{array}{c|ccc}
\rightarrow & a & b & 1 \\ \hline
a & 1 & a & b \\
b & b & 1 & a \\
1 & a & b & 1
\end{array}
\hspace{10mm}
\begin{array}{c|ccc}
 x     & a & b & 1  \\ \hline
d_1(x) & a & b & 1 \\
d_2(x) & b & 1 & a \\
d_3(x) & 1 & a & b 
\end{array}
.
\]
Then $(A,\ra,1)$ is a $p$-semisimple BCI-algebra (\cite{Asl1}). 
One can check that $\mathcal{SDOP}^{(I)}(A)=\mathcal{IDOP}^{(I)}(A)=\{d_1,d_2,d_3\}$ and 
$\mathcal{SDOP}^{(II)}(A)=\mathcal{IDOP}^{(II)}(A)=\{d_1\}$.  
\end{ex}

\begin{theo} \label{sdop-150} Let $A$ be a pseudo-BCI algebra and let 
$d\in \mathcal{SDOP}^{(I)}(A)\cup \mathcal{SDOP}^{(II)}(A)$. Then $d$ is regular if and only 
if every deductive system of $A$ is $d$-invariant.   
\end{theo}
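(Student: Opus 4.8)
The plan is to prove the two implications separately, splitting the ``regular $\Rightarrow$ every deductive system is $d$-invariant'' direction according to the type of $d$, since Propositions \ref{sdop-30} and \ref{sdop-40} supply the structural facts needed in each case. First I would dispose of the ``only if'' part: if every deductive system of $A$ is $d$-invariant, then in particular $\{1\}\in\mathcal{DS}(A)$ is $d$-invariant, so $d(\{1\})\subseteq\{1\}$, whence $d1=1$ and $d$ is regular. This is exactly the argument used in Proposition \ref{dop-170}, now applied to symmetric derivations.

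For the ``if'' direction, assume $d$ is regular ($d1=1$) and fix $D\in\mathcal{DS}(A)$. If $d\in\mathcal{SDOP}^{(I)}(A)$, Proposition \ref{sdop-30}(1) gives $x\ra dx=d1=1$ for all $x\in A$, that is $x\le dx$; hence for $x\in D$ we have $x\in D$ and $x\ra dx=1\in D$, so $(ds_2)$ forces $dx\in D$, i.e. $d(D)\subseteq D$. If $d\in\mathcal{SDOP}^{(II)}(A)$, Proposition \ref{sdop-40}(5) says that a regular type II symmetric derivation is already $\Id_A$, so $d(D)=D\subseteq D$ and $D$ is trivially $d$-invariant. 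Either way, every deductive system of $A$ is $d$-invariant.

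I do not anticipate a real obstacle: the only point requiring attention is invoking the right auxiliary result for each type --- the identity $d1=x\ra dx$ from Proposition \ref{sdop-30} when $d$ is of type I, and the collapse to the identity map from Proposition \ref{sdop-40}(5) when $d$ is of type II --- after which the deductive-system bookkeeping is immediate and no further computation is needed.
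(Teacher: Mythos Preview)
Your argument is correct and matches the paper's proof essentially line for line: both use the $d$-invariance of $\{1\}\in\mathcal{DS}(A)$ to get $d1=1$, and for the converse both split by type, invoking $x\le dx$ from Proposition~\ref{sdop-30} in the type~I case and the collapse $d=\Id_A$ from Proposition~\ref{sdop-40}(5) in the type~II case, after which closure under $(ds_2)$ does the rest. One small slip: you have interchanged the labels ``if'' and ``only if'' relative to how the biconditional is phrased in the statement, though the mathematical content under each label is correct.
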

\begin{proof}
Let $d\in \mathcal{SDOP}^{(I)}(A)\cup \mathcal{SDOP}^{(II)}(A)$ and assume that every 
$D\in \mathcal {DS}(A)$ is $d$-invariant. Since $\{1\}\in \mathcal {DS}(A)$, it follows that 
$d(\{1\})\subseteq \{1\}$, hence $d(1)=1$, that is $d$ is regular. 
Conversely, let $D\in \mathcal {DS}(A)$ and let $y\in d(D)$, that is there exists $x\in D$ such that $y=dx$. \\
If $d\in \mathcal{SDOP}^{(I)}(A)$, then by Proposition \ref{sdop-30}$(3)$ we have $x\ra y=x\ra dx=1\in D$, 
hence $y\in D$. 
For $d\in \mathcal{SDOP}^{(II)}(A)$, by Proposition \ref{sdop-40}$(5)$ we get $x\ra y=x\ra dx=x\ra x=1$, 
that is $y\in D$. 
It follows that $d(D)\subseteq D$, hence $D$ is $d$-invariant. 
\end{proof}

\bigskip

\section{Conclusions and future work}

In this paper we introduce and study two concepts of implicative derivation operators on pseudo-BCI algebras: 
type I implicative derivation defined by conditions $(idop_1)$, $(idop_2)$ and type II implicative derivation 
defined by conditions $(idop_3)$, $(idop_4)$. 
These conditions were required by the proof of Proposition \ref{dop-40}, which is crucial for the results of 
this paper. For the particular case of the pseudo-BCK algebras the above mentioned result is also valid for 
another two types of implicative derivations: \\ 
$-$ \emph{type III implicative derivation} defined by the following conditions, for all $x, y\in A:$ \\
$(idop_5)$ $d(x\ra y)=(x\ra dy)\Cup_1 (dx\ra y)$ \\
$(idop_6)$ $d(x\rs y)=(x\rs dy)\Cup_2 (dx\rs y)$, \\
$-$ \emph{type IV implicative derivation} defined by the following conditions, for all $x, y\in A:$ \\ 
$(idop_7)$ $d(x\ra y)=(dx\ra y)\Cup_1 (x\ra dy)$ \\
$(idop_8)$ $d(x\rs y)=(dx\rs y)\Cup_2 (x\rs dy)$. \\
In what follows we give an example of these derivations, but the investigation of type III and type IV implicative derivations on pseudo-BCK algebras is the topic of another work (\cite{Ciu100}). \\
Consider the structure $(A,\ra,\rs,1)$, where the operations $\ra$ and $\rs$ on $A=\{0,a,b,c,1\}$ 
are defined as follows:
\[
\hspace{10mm}
\begin{array}{c|ccccc}
\rightarrow & 0 & a & b & c & 1 \\ \hline
0 & 1 & 1 & 1 & 1 & 1 \\
a & 0 & 1 & b & 1 & 1 \\
b & a & a & 1 & 1 & 1 \\
c & 0 & a & b & 1 & 1 \\
1 & 0 & a & b & c & 1
\end{array}
\hspace{10mm}
\begin{array}{c|ccccc}
\rightsquigarrow & 0 & a & b & c & 1 \\ \hline
0 & 1 & 1 & 1 & 1 & 1 \\
a & b & 1 & b & 1 & 1 \\
b & 0 & a & 1 & 1 & 1 \\
c & 0 & a & b & 1 & 1 \\
1 & 0 & a & b & c & 1
\end{array}
.
\]
Then $(A,\ra,\rs,1)$ is a pseudo-BCK algebra (\cite{Ciu4}). 
Consider the maps $d_i:A\longrightarrow A$, $i=1,\cdots,6$, given in the table below:
\[
\begin{array}{c|cccccc}
 x     & 0 & a & b & c & 1  \\ \hline
d_1(x) & 0 & a & b & c & 1 \\
d_2(x) & 0 & a & b & 1 & 1 \\
d_3(x) & 0 & 1 & 1 & c & 1 \\
d_4(x) & 0 & 1 & 1 & 1 & 1 \\
d_5(x) & 1 & 1 & 1 & c & 1 \\
d_6(x) & 1 & 1 & 1 & 1 & 1   
\end{array}
.   
\]
One can check that $\mathcal{IDOP}^{(I)}(A)=\mathcal{IDOP}^{(II)}(A)=\mathcal{IDOP}^{(IV)}(A)=\{d_1,d_2,d_5,d_6\}$ and 
$\mathcal{IDOP}^{(III)}(A)=\{d_1,d_2,d_3,d_4,d_5,d_6\}$. \\ 
We also introduce two types of symmetric derivations on pseudo-BCI algebras and study the relationship between 
implicative and symmetric derivations. \\
The results presented in this paper could be extended to other ``pseudo" algebras such as pseudo-BCH algebras, pseudo-BE algebras, pseudo-CI algebras, non-commutative residuated lattices. 
As a generalization of the concept of derivations, the $f$-derivation has been defined for lattices (\cite{Cev1}), BCI-algebras (\cite{Jav1}, \cite{Nis1}, \cite{Zhan1}), BE-algebras (\cite{Kim2}). 
If $(A,*,0)$ is a BCI-algebra and $f$ is an endomorphism of $A$, then a map $d:A\longrightarrow A$ is called a \emph{left-right $f$-derivation} if $d(x*y)=(dx*f(y))\wedge (f(x)*dy)$ and a \emph{right-left $f$-derivation}
if $d(x*y)=(f(x)*dy)\wedge (dx*f(y))$ for all $x, y\in A$, where $x\wedge y=y*(y*x)$.
As another direction of research one could define and study the concept of $f$-derivations on pseudo-BCI algebras 
and other ``pseudo" algebras. \\
The concept of generalized derivations on BCI-algebras defined and investigated in \cite{Ozt1}, \cite{Muh1}, \cite{Muh2} could be also introduced and studied for the case of pseudo-BCI algebras. \\
The BCI-algebras with product (BCI(P)-algebras) $(A,\odot, \ra, 1)$ were originally introduced by 
$\rm Is\acute{e}ki$ (\cite{Ise1}) as BCI-algebras with condition (S). 
The concept of \emph{multiplicative derivations} on BCI(P)-algebras could be defined and studied as another 
topic of future research.



\bigskip

\bigskip

\noindent {\footnotesize
\begin{minipage}[b]{10cm}
Lavinia Corina Ciungu\\
Department of Mathematics \\
University of Iowa \\
14 MacLean Hall, Iowa City, Iowa 52242-1419, USA \\
Email: lavinia-ciungu@uiowa.edu
\end{minipage}}

\end{document}